\documentclass{interact}


\usepackage[caption=false]{subfig}

\usepackage[numbers,sort&compress]{natbib}
\bibpunct[, ]{[}{]}{,}{n}{,}{,}
\makeatletter
\def\NAT@def@citea{\def\@citea{\NAT@separator}}
\makeatother


\usepackage{amsmath, amsthm, amssymb}
\usepackage{thmtools}
\usepackage{mathtools}
\mathtoolsset{centercolon}
\usepackage{enumitem}
\usepackage{graphicx}
\usepackage{delimset}
\usepackage[hidelinks]{hyperref}
\hypersetup{
  pdftitle={Closedness of the orbit-closed C-numerical range and submajorization},
  pdfauthor={Jireh Loreaux, Sasmita Patnaik}
}
\usepackage{tikz}
\usetikzlibrary{cd}
\usepackage{cleveref}





\newcommand{\set}{\delim\{\}}
\newcommand{\setb}{\delimpair\{{[m]\vert}\}}
\newcommand{\innerprod}{\delimpair<{[.],}>}
\newcommand{\seq}{\delim()}


\expandafter\mathchardef\expandafter\varphi\number\expandafter\phi\expandafter\relax
\expandafter\mathchardef\expandafter\phi\number\varphi


\makeatletter
\newcommand\bigcdot{\mathpalette\bigcdot@{.5}}
\newcommand*\bigcdot@[2]{\mathbin{\vcenter{\hbox{\scalebox{#2}{$\m@th#1\bullet$}}}}}
\makeatother


\newcommand{\term}[1]{\emph{#1}}
\newcommand{\weakstar}{weak$^{*}$}
\newcommand{\wstar}{w^{*}}


\newcommand{\Hil}{\mathcal{H}}
\newcommand{\K}{\mathcal{K}}

\newcommand{\traceclass}{\mathcal{L}_1}
\newcommand{\orbit}{\mathcal{O}}
\newcommand{\ugroup}{\mathcal{U}}

\newcommand{\zop}{\mathbf{0}}


\renewcommand{\epsilon}{\varepsilon}

\newcommand{\nats}{\mathbb{N}}

\newcommand{\reals}{\mathbb{R}}
\newcommand{\complex}{\mathbb{C}}


\DeclareFontFamily{U}{mathb}{\hyphenchar\font45}
\DeclareFontShape{U}{mathb}{m}{n}{
<-6> mathb5 <6-7> mathb6 <7-8> mathb7
<8-9> mathb8 <9-10> mathb9
<10-12> mathb10 <12-> mathb12
}{}
\DeclareSymbolFont{mathb}{U}{mathb}{m}{n}
\DeclareMathSymbol{\pprec}{\mathrel}{mathb}{"CE}
\DeclareMathSymbol{\ssucc}{\mathrel}{mathb}{"CF}


\DeclareMathOperator{\spec}{\sigma}
\newcommand{\essspec}{\spec_{\mathrm{ess}}}

\DeclareMathOperator{\diag}{diag}
\DeclareMathOperator{\rank}{rank}
\DeclareMathOperator{\trace}{Tr}
\DeclareMathOperator{\spans}{span}
\DeclareMathOperator{\conv}{conv}
\DeclareMathOperator{\ext}{ext}

\newcommand{\closure}[2][]{\overline{#2}^{#1}}

\newcommand{\nr}{W}
\newcommand{\essnr}{\nr_{\textrm{ess}}}
\newcommand{\knr}[1][k]{\nr_{#1}}
\newcommand{\cnr}[1][C]{\nr_{#1}}
\newcommand{\ocnr}[1][C]{\nr_{\orbit(#1)}}
\newcommand{\maj}{\prec}
\newcommand{\submaj}{\pprec}


\setlist[enumerate]{label=\textup{(\roman*)}}


\numberwithin{equation}{section}


\theoremstyle{plain} 
\newtheorem{theorem}{Theorem}[section]
\newtheorem{corollary}[theorem]{Corollary}
\newtheorem{lemma}[theorem]{Lemma}
\newtheorem{proposition}[theorem]{Proposition}

\theoremstyle{definition}
\newtheorem{definition}[theorem]{Definition}

\theoremstyle{remark}
\newtheorem{remark}[theorem]{Remark}
\newtheorem{example}[theorem]{Example}

\newcounter{case}
\makeatletter\@addtoreset{case}{theorem}\makeatother


\title{Closedness of the orbit-closed \\ $C$-numerical range and submajorization}

\author{\name{Jireh Loreaux\textsuperscript{a}\thanks{J. Loreaux email: jloreau@siue.edu} and Sasmita Patnaik\textsuperscript{b}\thanks{S. Patnaik email: sasmita@iitk.ac.in}}\affil{\textsuperscript{a}Southern Illinois University Edwardsville, 1 Hairpin Dr, Edwardsville, IL, 62026, USA; \textsuperscript{b}Indian Institute of Technology, Kanpur, Kalyanpur, Kanpur-208016, India.}}


\begin{document}

\maketitle

\begin{abstract}
  For a positive trace-class operator $C$ and a bounded operator $A$, we provide an explicit description of the closure of the orbit-closed $C$-numerical range of $A$ in terms of those operators submajorized by $C$ and the essential numerical range of $A$.
  This generalizes and subsumes recent work of Chan, Li and Poon for the $k$-numerical range, as well as some of our own previous work on the orbit-closed $C$-numerical range.
\end{abstract}

\begin{keywords}
  numerical range, $C$-numerical range, convex, trace-class, essential numerical range, majorization, submajorization, \weakstar{} convergence
\end{keywords}
\begin{amscode}
  Primary 47A12, 47B15; Secondary 52A10, 52A40, 26D15.
\end{amscode}

\section{Introduction}

Herein we let $\Hil$ denote a separable complex Hilbert space and $B(\Hil)$ the collection of all bounded linear operators on $\Hil$.
For $A \in B(\Hil)$, the \term{numerical range} $\nr(A)$ is the image of the unit sphere of $\Hil$ under the continuous quadratic form $x \mapsto \innerprod{Ax}{x}$, where $\innerprod{\bigcdot}{\bigcdot}$ denotes the inner product on $\Hil$.
The \term{essential numerical range} $\essnr(A)$ has many equivalent definitions, including the set of limits of convergent sequences $\innerprod{Ax_n}{x_n}$ where $\seq{x_n}_{n=1}^{\infty}$ is an orthonormal sequence \cite{FSW-1972-ASM}.
It is well-known that
\begin{equation}
  \label{eq:closure-numerical-range}
  \closure{W(A)} = \conv (\nr(A) \cup \essnr(A)),
\end{equation}
which is due to Lancaster \cite{Lan-1975-PAMS}.

There have been a number of generalizations of this result, including by Chan \cite{Cha-2018-LaMA} and Chan, Li and Poon \cite{CLP-2020-LaMA}.
Chan \cite{Cha-2018-LaMA} generalized this to the joint numerical range of an $n$-tuple of operators, whereas Chan, Li and Poon \cite{CLP-2020-LaMA} generalized it to the $k$-numerical range.
The \term{$k$-numerical range} is the collection\footnotemark{}
\begin{equation}
  \label{eq:k-numerical-range-definition}
  \knr(A) := \setb{\trace(PA)}{P\ \text{rank-$k$ projection}}.
\end{equation}
The reader should note that this is a natural generalization of the standard numerical range since $\knr[1](A) = \nr(A)$.
In \cite{CLP-2020-LaMA}, Chan, Li and Poon showed
\begin{equation}
  \label{eq:clp-closure-k-numerical-range}
  \closure{\knr(A)} = \conv \bigcup_{j=0}^k ( \knr[j](A) + (k-j) \essnr(A) ),
\end{equation}
which generalizes \eqref{eq:closure-numerical-range}.
They also proved if $\knr[k+1](A)$ is closed, then $\knr(A)$ is closed.
Consequently, $\knr(A)$ is closed if and only if
\begin{equation}
  \label{eq:k-numerical-range-inclusion-chain}
  k \essnr(A) \subseteq \knr[1](A) + (k-1) \essnr(A) \subseteq \cdots \subseteq \knr[k-1](A) + \essnr(A) \subseteq \knr(A).
\end{equation}
Chan, Li and Poon very recently (\cite{CLP-2021}) extended some of their results to the \term{joint $k$-numerical range} of a tuple of operators.

\footnotetext{%
  This definition of the $k$-numerical range is the one given by Chan, Li and Poon.
  However, the reader should be aware that there is another definition, differing only by a scaling factor:
  \begin{equation*}
    \setb*{\frac{1}{k} \trace(PA)}{P\ \text{rank-$k$ projection}}.
  \end{equation*}
  The definition given in this footnote is the one originally described by Halmos in \cite{Hal-1964-ASM}.
  Both definitions appear throughout the literature, so one always has to be careful to see which definition the authors use.
}

In our recent paper \cite{LP-2021-LaMA}, we introduced the \term{orbit-closed $C$-numerical range} for $C \in \traceclass$, the trace class, which is defined as follows.
Let $\ugroup(C)$ denote the \term{unitary orbit} of $C$ under the natural conjugation action of the unitary group.
Then the trace-norm closure $\orbit(C) := \closure[\norm{\bigcdot}_1]{\ugroup(C)}$ we call the \term{orbit} of $C$.
The orbit-closed $C$-numerical range is
\begin{equation}
  \label{eq:orbit-closed-c-numerical-range}
  \ocnr(A) := \setb{\trace(XA)}{X \in \orbit(C)}.
\end{equation}
For finite rank $C$, $\orbit(C) = \ugroup(C)$ and hence, in this case, $\ocnr(A)$ coincides with the usual $C$-numerical range $\cnr(A)$ initially studied by Westwick \cite{Wes-1975-LMA}, Goldberg and Straus \cite{GS-1977-LAA}, and by many others since then.
Consequently, $\ocnr(A)$ constitutes a natural extension of this object to the setting when $C$ has infinite rank.

This paper generalizes the aforementioned results (\eqref{eq:clp-closure-k-numerical-range} and \eqref{eq:k-numerical-range-inclusion-chain}) of Chan, Li and Poon \cite{CLP-2020-LaMA} to the context of the orbit-closed $C$-numerical range.
Our main theorem (\Cref{thm:main-theorem}) generalizes \eqref{eq:clp-closure-k-numerical-range} and provides an independent proof of this fact.
Moreover, as we showed in \cite{LP-2021-LaMA} that $\ocnr(A)$ is intimately connected with majorization\footnotemark{} (denoted $\maj$, see \Cref{def:majorization}) when $C$ is selfadjoint, so also we connect the closure $\closure{\ocnr(A)}$ to submajorization (denoted $\submaj$, see \Cref{def:majorization}).
In particular, we prove in \Cref{thm:main-theorem}, for $C \in \traceclass^+$ and $A \in B(\Hil)$,
\footnotetext{%
  Poon also made this connection in the case when $C$ is finite rank \cite{Poo-1980-LMA}.%
}
\begin{align*}
  \closure{\ocnr(A)} &= \setb1{ \trace(XA) + \trace(C-X) \essnr(A) }{ X \in \traceclass^+, \lambda(X) \submaj \lambda(C) } \\
                     &= \conv \bigcup_{0 \le m \le \rank(C)} \big( \ocnr[C_m](A) + \trace(C-C_m) \essnr(A) \big),
\end{align*}
where $C_m := \diag(\lambda_1(C),\ldots,\lambda_m(C),0,0,\ldots)$.
\Cref{ex:selfadjoint-fail} shows that \Cref{thm:main-theorem} doesn't generalize to $C$ selfadjoint in the way one might expect.

We later establish in \Cref{thm:closedness-restricts} that, when $C$ is positive, if $\ocnr(A)$ is closed, then $\ocnr[C_m](A)$ is closed for every $0 \le m < \rank(C)$.
Combining \Cref{thm:main-theorem,thm:closedness-restricts} yields \Cref{cor:closed-inclusion-chain}: $\ocnr(A)$ is closed if and only if
\begin{align*}
  \trace(C) \essnr(A) &\subseteq \ocnr[C_1](A) + \trace(C-C_1) \essnr(A) \\
                      &\subseteq \ocnr[C_2](A) + \trace(C-C_2) \essnr(A) \\
                      &\ \,\vdots \\
                      &\subseteq \ocnr(A),
\end{align*}
which generalizes \eqref{eq:k-numerical-range-inclusion-chain}.
Note: if $\rank(C)$ is infinite, this chain of inclusions has order type $\omega + 1$.

\section{Notation and Background}

We first introduce relevant notation.
We let $\K$ denote the norm-closed ideal of $B(\Hil)$ consisting of compact operators, and we let $\traceclass$ denote the ideal of trace-class operators, which is a Banach space when equipped with the \term{trace norm} $\norm{C}_1 := \trace(\abs{C})$.
The collection of selfadjoint elements in these classes are denoted $\K^{sa}, B(\Hil)^{sa}, \traceclass^{sa}$, respectively.
The positive elements are likewise denoted $\K^+, B(\Hil)^+, \traceclass^+$.
For any $X \in B(\Hil)$, $R_X$ denotes the range projection of $X$.
The symbol $\zop_{\Hil}$ denotes the zero operator on the (separable infinite-dimensional) Hilbert space $\Hil$.

For $A \in B(\Hil)$, the real and imaginary parts of $A$ are given by $\Re(A), \Im(A)$.
For a selfadjoint operator $A \in B(\Hil)^{sa}$, we let $A_{\pm}$ denote its positive and negative parts.
If $A$ is selfadjoint and $E \subseteq \reals$ is Borel, $\chi_E(A)$ is the spectral projection of $A$ from the Borel functional calculus corresponding to the set $E$.

If $C \in \K$, we let $\lambda(C)$ represent the\footnotemark{} \term{eigenvalue sequence} of $C$, which consists of the eigenvalues of $C$ listed in order of nonincreasing modulus, repeated according to algebraic multiplicity, and omitting the zero eigenvalue if there are infinitely many nonzero eigenvalues.

\footnotetext{%
  Note that $\lambda(C)$ is not generally uniquely determined since there may be unequal eigenvalues with the same modulus.
  However, if $C$ is positive, then $\lambda(C)$ is uniquely determined.%
}

If $C \in \K^{sa}$, then $\lambda^+(C)$ is the nonincreasing rearrangement of the sequence of nonnegative eigenvalues, along with infinitely many zeros when $\rank(C_+) < \infty$ (even if zero is \emph{not} an eigenvalue of $C$).
Similarly, $\lambda^-(C) := \lambda^+(-C)$.
Note that if $C \in \K^+$, then $\lambda(C) = \lambda^+(C)$.

We let $\ugroup(C)$ denote the \term{unitary orbit} of $C$, and, for $C \in \traceclass$,  $\orbit(C) := \closure[\norm{\cdot}_1]{\ugroup(C)}$.
We note that for normal $C \in \traceclass$, the following are equivalent (see \cite[Proposition~3.1]{LP-2021-LaMA}): $X \in \orbit(C)$; $X$ is normal and $\lambda(X) = \lambda(C)$; $X \oplus \zop_{\Hil} \in \ugroup(C \oplus \zop_{\Hil})$.

Throughout this paper, whenever we refer to the \weakstar{} topology, we will always mean the topology on $\traceclass \cong \K^{*}$ induced by the isometric isomorphism $C \mapsto \trace(C \bigcdot)$.
Since $\K$ is separable, this topology is metrizable on trace-norm bounded sets (by Banach--Alaoglu) and \weakstar{} convergence $X_n \to X$ means $\trace(X_n A) \to \trace(XA)$ for all $A \in \K$ (careful, \emph{not} $A \in B(\Hil)$).

\begin{definition}
  \label{def:majorization}
  Suppose that $a := (a_k)_{k=1}^{\infty}, b := (b_k)_{k=1}^{\infty}$ are real-valued sequences converging to zero.
  If for all $n \in \nats$,
  \begin{equation*}
    \sum_{k=1}^n a^{\pm}_k \le \sum_{k=1}^n b^{\pm}_k,
  \end{equation*}
  then $a$ is \term{submajorized} by $b$, denoted $a \submaj b$.

  If $a,b \in \ell_1$, and $a \submaj b$ and also
  \begin{equation*}
    \sum_{k=1}^{\infty} a_k = \sum_{k=1}^{\infty} b_k,
  \end{equation*}
  then $a$ is \term{majorized} by $b$, denoted $a \maj b$.
\end{definition}

This concludes the necessary notation.
We now review some background material which will be necessary, most of which comes from \cite{LP-2021-LaMA}.
Our main result from \cite{LP-2021-LaMA} is:

\begin{theorem}[\protect{\cite[Theorem~4.1, Corollaries~4.1,~4.2]{LP-2021-LaMA}}]
  \label{thm:c-numerical-range-via-majorization}
  For a selfadjoint trace-class operator $C \in \traceclass^{sa}$ and any $A \in B(\Hil)$,
  \begin{equation*}
    \ocnr(A) = \{ \trace(XA) \mid X \in \traceclass^{sa}, \lambda(X) \maj \lambda(C) \}.
  \end{equation*}
  Consequently,
  \begin{enumerate}
  \item $\ocnr(A)$ is convex.
  \item \label{item:majorization-inclusion} If $C' \in \traceclass^{sa}$ and $\lambda(C) \maj \lambda(C')$, then $\ocnr(A) \subseteq \ocnr[C'](A)$.
  \end{enumerate}
\end{theorem}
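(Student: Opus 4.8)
The plan is to prove the displayed identity by two inclusions; the ``Consequently'' items are then formal, since $\mathcal{M} := \{X = X^{*} \in \traceclass : \lambda(X) \maj \lambda(C)\}$ is convex --- it is the intersection of the affine constraint $\trace X = \trace C$ with the sublevel sets of the convex functions $X \mapsto \sum_{k=1}^{n} \lambda_{k}^{+}(X)$, convexity of the latter being the variational formula $\sum_{k=1}^{n} \lambda_{k}^{+}(X) = \sup\{ \trace(PX) : P \text{ a projection}, \rank P \le n \}$ --- so $\ocnr(A) = \trace(\mathcal{M} \cdot A)$ is convex as a linear image of $\mathcal{M}$, while transitivity of majorization gives $\mathcal{M}_{C} \subseteq \mathcal{M}_{C'}$ whenever $\lambda(C) \maj \lambda(C')$, which is \textup{(ii)}. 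The inclusion $\ocnr(A) \subseteq \trace(\mathcal{M} \cdot A)$ is immediate: by the equivalence recalled above, every $X \in \orbit(C)$ is normal with $\lambda(X) = \lambda(C)$, hence selfadjoint (compact normal with real spectrum), and $\lambda(X) = \lambda(C) \maj \lambda(C)$.

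For the reverse inclusion, first reduce: since $\trace(U^{*}XU \cdot A) = \trace(X \cdot UAU^{*})$ and $\orbit(C)$ is invariant under unitary conjugation, we may take $C = \diag(c_{1}, c_{2}, \dots)$ with $(c_{n}) = \lambda(C)$ in a fixed orthonormal basis $(e_{n})$. A ``rotate-and-truncate'' argument --- choose unitaries $V_{N}$ with $V_{N} e_{n} = f_{n}$ for $n \le N$ and use $\sum_{n} \abs{c_{n}} < \infty$ --- shows $\sum_{n} c_{n}\, f_{n} \otimes f_{n} \in \orbit(C)$ for every orthonormal sequence $(f_{n})$. So it suffices, given $X = X^{*} \in \traceclass$ with $\lambda(X) \maj \lambda(C)$, written $X = \sum_{n} d_{n}\, h_{n} \otimes h_{n}$ for an orthonormal basis $(h_{n})$ and a suitable rearrangement $(d_{n})$ of the eigenvalues, to produce an orthonormal sequence $(g_{n})$ with
\[
  \sum_{n} c_{n} \innerprod{A g_{n}}{g_{n}} = \sum_{n} d_{n} \innerprod{A h_{n}}{h_{n}} = \trace(XA).
\]

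The engine is an elementary move: replace an orthonormal pair $(v_{i}, v_{j})$ by another pair spanning the same plane. Since the numerical range of the $2 \times 2$ compression $A|_{\spans\{v_{i}, v_{j}\}}$ is convex (Toeplitz--Hausdorff) and contains $\innerprod{A v_{i}}{v_{i}}$ and $\innerprod{A v_{j}}{v_{j}}$, for any $t \in [0,1]$ we may choose the new $v_{i}'$ with $\innerprod{A v_{i}'}{v_{i}'} = (1-t) \innerprod{A v_{i}}{v_{i}} + t \innerprod{A v_{j}}{v_{j}}$, whereupon $\innerprod{A v_{j}'}{v_{j}'}$ is forced by additivity of the trace on that plane; the move therefore changes a weighted sum $\sum_{n} w_{n} \innerprod{A v_{n}}{v_{n}}$ exactly as a single $T$-transform changes the weight pair $(w_{i}, w_{j})$. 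As $\lambda(X) \maj \lambda(C)$, the sequence $(c_{n})$ majorizes $(d_{n})$; by the Hardy--Littlewood--P\'olya theorem, when $(d_{n})$ is reachable from $(c_{n})$ by \emph{finitely many} $T$-transforms we undo that chain one move at a time, starting from $(h_{n})$, and arrive at the desired $(g_{n})$. This already settles the case $\rank C < \infty$ (it recovers the classical theorem of Westwick and Poon on the $C$-numerical range in finite dimensions, applied to compressions of $A$; cf.\ \cite{Wes-1975-LMA,Poo-1980-LMA}). In general one approximates $(d_{n})$ in $\ell_{1}$ by sequences finitely-$T$-reachable from $(c_{n})$ and passes to the limit, using the key point that $\orbit(C)$ --- unlike $\ocnr(A)$ itself --- is trace-norm closed, so a trace-norm-convergent sequence of orbit elements has its limit in $\orbit(C)$ and its pairings against $A$ converge to the pairing of the limit.

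The main obstacle is precisely this passage to the limit. One must organize the (possibly infinitely many) elementary moves so that the operators $\sum_{n} c_{n}\, v_{n}^{(m)} \otimes v_{n}^{(m)}$ converge \emph{in trace norm}, not merely in the \weakstar{} topology: a priori the rotation needed to prescribe a diagonal value on a $2 \times 2$ compression can be large even when the transferred weight is small, and then naive bookkeeping only yields convergence of the numbers $\trace(\,\cdot\,A)$, placing $\trace(XA)$ in $\closure{\ocnr(A)}$ rather than in $\ocnr(A)$. Controlling the rotations --- for instance, routing successive transfers through fresh basis directions, or supplementing the limiting argument with the observation that a non-attained boundary point of the convex set $\ocnr(A)$ cannot be an honest convex combination of attained points --- is where the genuine work lies; everything else is routine bookkeeping with majorization.
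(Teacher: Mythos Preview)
This theorem is quoted from the authors' earlier paper \cite{LP-2021-LaMA} and carries no proof in the present paper, so there is no in-paper argument to compare against. Your sketch is a reasonable route and is essentially correct through the finite-rank case: the $T$-transform/Toeplitz--Hausdorff manoeuvre is Poon's argument, and your reduction to diagonal operators along an arbitrary orthonormal sequence via the characterization of $\orbit(C)$ is fine.

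The gap you yourself flag, however, is genuine and is not closed by either of your proposed fixes. The $\ell_1$-approximation of $(d_n)$ by sequences finitely-$T$-reachable from $(c_n)$ only places $\trace(XA)$ in $\closure{\ocnr(A)}$, and your observation that ``a non-attained boundary point of the convex set $\ocnr(A)$ cannot be an honest convex combination of attained points'' presupposes that $\ocnr(A)$ is convex --- but you deduce item~(i) \emph{from} the displayed identity, so invoking it here is circular. (Convexity of $\mathcal{M}$ gives convexity of the right-hand side $\trace(\mathcal{M}\cdot A)$, not of $\ocnr(A)$, until the equality is established; and even granting convexity, you would still need to exhibit $\trace(XA)$ as a \emph{finite} convex combination of orbit values, which majorization in $\ell_1$ does not provide.) The other suggestion, ``routing successive transfers through fresh basis directions'', is too vague to assess: each elementary rotation is confined to the span of the current pair $(v_i,v_j)$, so one cannot freely import new directions without disturbing the value of the weighted sum one is trying to preserve. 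As it stands, the proposal correctly isolates the crux of the infinite-rank case but does not resolve it.
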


We will need a simple lemma which in some sense allows us to stay inside a given subspace in \Cref{thm:c-numerical-range-via-majorization}\ref{item:majorization-inclusion}.

\begin{lemma}
  \label{lem:subspace-restriction-majorization}
  Let $C,C' \in \traceclass^{sa}$ be selfadjoint trace-class operators with $\lambda(C) \maj \lambda(C')$, and let $A \in B(\Hil)$, $X \in \orbit(C)$.

  If $\rank(C) \ge \rank(C')$, then there is some $X' \in \orbit(C')$ for which $R_{X'} \le R_X$ and $\trace(XA) = \trace(X'A)$.
  Moreover, if $C' \ge 0$, the hypothesis $\rank(C) \ge \rank(C')$ may be omitted.
\end{lemma}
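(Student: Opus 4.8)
The plan is to reduce the whole statement to \Cref{thm:c-numerical-range-via-majorization}\ref{item:majorization-inclusion} by compressing $X$ onto the subspace $\mathcal{M} := \overline{\operatorname{ran} X}$, since that subspace is exactly what the constraint $R_{X'} \le R_X$ is about. First I would record that, because $X \in \orbit(C)$ is selfadjoint with $\lambda(X) = \lambda(C)$, its range projection $R_X$ equals the orthogonal projection $P_{\mathcal{M}}$ onto $\mathcal{M}$, that $X$ leaves $\mathcal{M}$ invariant, and that $\dim \mathcal{M} = \rank X = \rank C$. Writing $\Hil = \mathcal{M} \oplus \mathcal{M}^{\perp}$ we get $X = X_0 \oplus \zop$, where $X_0 \in B(\mathcal{M})$ is selfadjoint, has trivial kernel, and carries exactly the nonzero eigenvalue data of $C$ (with multiplicity); moreover $\trace(XA) = \trace(X_0 A_0)$ where $A_0 := P_{\mathcal{M}} A P_{\mathcal{M}}$, viewed as an operator on $\mathcal{M}$. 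Thus it suffices to produce $X'_0 \in B(\mathcal{M})$ with $X'_0 \oplus \zop \in \orbit(C')$ and $\trace(X'_0 A_0) = \trace(X_0 A_0)$: putting $X' := X'_0 \oplus \zop$, we then automatically get $R_{X'} \le P_{\mathcal{M}} = R_X$, $\trace(X'A) = \trace(XA)$, and $X' \in \orbit(C')$, the last because adjoining $\zop$ on $\mathcal{M}^{\perp}$ does not change the eigenvalue sequence (whether $\rank C'$ is finite or infinite), so $\lambda(X') = \lambda(C')$ and the characterization of $\orbit(C')$ from \cite{LP-2021-LaMA} applies.

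Next I would split on the dimension. If $\rank C = \infty$, then $\mathcal{M}$ is separable and infinite-dimensional, so \Cref{thm:c-numerical-range-via-majorization}\ref{item:majorization-inclusion}, applied on $\mathcal{M}$, gives $\trace(X_0 A_0) \in \ocnr[C](A_0) \subseteq \ocnr[C'](A_0)$ directly from $\lambda(C) \maj \lambda(C')$, which produces the required $X'_0 \in \orbit(C')$ on $\mathcal{M}$. If $\rank C = r < \infty$, then the hypothesis forces $\rank C' \le r < \infty$ and everything happens in the $r$-dimensional space $\mathcal{M}$; there $\lambda(C) \maj \lambda(C')$ is just ordinary majorization of the $r$-vectors $(\lambda_1(C),\ldots,\lambda_r(C)) \maj (\lambda_1(C'),\ldots,\lambda_{\rank C'}(C'),0,\ldots,0)$, and the corresponding containment of finite-dimensional $C$-numerical ranges is classical: diagonalizing $X_0$, feeding its eigenvalue vector through a doubly stochastic matrix (Birkhoff--von Neumann), and using convexity of the $C$-numerical range of a Hermitian matrix (\cite{Wes-1975-LMA,Poo-1980-LMA}) exhibits $\trace(X_0 A_0)$ as a convex combination of points of the (convex) set $\{\trace(Y A_0) \mid Y \in \ugroup_{\mathcal{M}}(C'_r)\}$, with $C'_r := \diag(\lambda_1(C'),\ldots,\lambda_{\rank C'}(C'),0,\ldots,0) \in M_r$; hence $\trace(X_0 A_0)$ lies in that set, which furnishes the required $X'_0$.

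For the last sentence of the lemma I would observe that the extra hypothesis is automatic once $C' \ge 0$: then $\lambda^{-}(C') = 0$, so the submajorization inequalities for the negative parts force $\lambda^{-}(C) = 0$, i.e.\ $C \ge 0$; and if we had $\rank C < \rank C'$, then, using $\lambda(C) \submaj \lambda(C')$ and $\trace(C) = \trace(C')$,
\[
  \trace(C') = \sum_{k=1}^{\rank C} \lambda_k(C) \le \sum_{k=1}^{\rank C} \lambda_k(C') < \trace(C'),
\]
a contradiction, so $\rank C \ge \rank C'$ and we are back in the case already handled. The one genuine obstacle is the finite-rank situation, where \Cref{thm:c-numerical-range-via-majorization} does not directly apply and one has to fall back on the classical finite-dimensional $C$-numerical range theory; everything else is bookkeeping with ranges and eigenvalue sequences.
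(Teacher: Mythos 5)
Your proof is correct and follows essentially the same route as the paper: compress $X$ to the subspace $R_X\Hil$, apply \Cref{thm:c-numerical-range-via-majorization}(ii) there to produce the desired operator with the same trace pairing, lift it back by a direct sum with zero, and in the $C' \ge 0$ case show the rank hypothesis is automatic by the same trace/rank-comparison argument. The only divergence is your separate finite-rank case via Birkhoff--von Neumann and Westwick/Poon convexity; the paper simply invokes \Cref{thm:c-numerical-range-via-majorization}(ii) on $R_X\Hil$ irrespective of its dimension (where it reduces to exactly that classical fact), so your detour is sound but unnecessary.
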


\begin{proof}
  We first note that, in the context $C' \ge 0$, the hypothesis $\lambda(C) \maj \lambda(C')$ implies $C \ge 0$ and $\rank(C) \ge \rank(C')$, which is just a simple fact about majorization of nonnegative sequences.
  Indeed, if $\rank(C) = \infty$, there is nothing to prove, so we may assume $\rank(C) < \infty$.
  Then since $\lambda(C) \maj \lambda(C')$ (and so $\lambda(C) \submaj \lambda(C')$),
  \begin{equation*}
    \trace(C) = \sum_{n=1}^{\rank(C)} \lambda_n(C) \le \sum_{n=1}^{\rank(C)} \lambda_n(C') \le \sum_{n=1}^{\infty} \lambda_n(C') = \trace(C').
  \end{equation*}
  Because $\lambda(C) \maj \lambda(C')$, then $\trace(C) = \trace(C')$ and so we must have equality throughout this chain.
  Therefore $\sum_{n=\rank(C) + 1}^{\infty} \lambda_n(C') = 0$, and thus $\rank(C') \le \rank(C)$.

  Now suppose $C,C'$ are selfadjoint and $\rank(C) \ge \rank(C')$.
  Since $X$ is selfadjoint, $R_X X = X R_X = X$.
  Therefore
  \begin{equation*}
    \trace(XA) = \trace(R_X X R_X A) = \trace(R_X X R_X A R_X) = \trace_{R_X \Hil}(YA'),
  \end{equation*}
  where $Y = R_X X \vert_{R_X \Hil} \in B(R_X \Hil)$ and $A' = R_X A \vert_{R_X \Hil} \in B(R_X \Hil)$.
  Since $R_X X R_X = X$, then $\lambda(Y) = \lambda(X) = \lambda(C)$.
  Because $\rank(C') \le \rank(C) = \dim R_X \Hil$, there is some selfadjoint $C''$ acting on $R_X \Hil$ with $\lambda(C'') = \lambda(C')$.
  Therefore, $\lambda(Y) \maj \lambda(C'')$ and so by \Cref{thm:c-numerical-range-via-majorization}(ii), $\cnr[\orbit_{R_X \Hil}(Y)](A') \subseteq \cnr[\orbit_{R_X \Hil}(C'')](A')$.
  Consequently, there is some $Y' \in \orbit_{R_X \Hil}(C'')$ for which $\trace_{R_X \Hil}(YA') = \trace_{R_X \Hil}(Y'A')$.
  Finally, set $X' := Y' \oplus \zop_{R_X^{\perp} \Hil}$, so that $R_{X'} \le R_X$, and $\lambda(X') = \lambda(Y') = \lambda(C'') = \lambda(C')$, and hence $X' \in \orbit(C')$, and also
  \begin{equation*}
    \trace(X'A) = \trace_{R_X \Hil}(Y'A') = \trace_{R_X \Hil}(YA') = \trace(XA). \qedhere
  \end{equation*}
\end{proof}

\begin{remark}
  In case $C'$ is selfadjoint, the hypothesis $\rank(C) \ge \rank(C')$ may not be omitted in general.
  Indeed, there are examples of selfadjoint $C,C'$ such that $\lambda(C) \maj \lambda(C')$, but for which $\rank(C) < \rank(C')$, thereby ensuring the conclusion of \Cref{lem:subspace-restriction-majorization} is unattainable.
  For example, if $C'$ is selfadjoint and trace zero, then it majorizes the zero operator.
\end{remark}

\Cref{lem:subspace-restriction-majorization} has the following corollary in the extremal case when $C'$ is rank-$2$, or, in case $C$ is positive or negative, even rank-$1$.

\begin{corollary}
  \label{cor:subspace-restrction-rank-2-majorization}
  Suppose that $C \in \traceclass^{sa}$ is a selfadjoint trace-class operator and $X \in \orbit(C)$.
  Then if $C':= \diag(\trace(C_+), -\trace(C_-), 0, \ldots)$, there is some $X' \in \orbit(C')$ with $R_{X'} \le R_X$ and $\trace(X'A) = \trace(XA)$.
\end{corollary}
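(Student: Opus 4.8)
The plan is to read this off directly from \Cref{lem:subspace-restriction-majorization} applied to the rank-$\le 2$ operator $C'$. Write $c_+ := \trace(C_+)$ and $c_- := \trace(C_-)$, so that $C' = \diag(c_+,-c_-,0,0,\ldots)$. Since $C'$ need not be positive, we cannot invoke the ``moreover'' clause of \Cref{lem:subspace-restriction-majorization}, so we must check \emph{both} that $\lambda(C) \maj \lambda(C')$ \emph{and} that $\rank(C) \ge \rank(C')$, and then the lemma delivers exactly the $X'$ we want.

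The rank inequality is the easy part: $\rank(C')$ is simply the number of the quantities $c_+, c_-$ that are nonzero, and $c_+ > 0$ forces $\rank(C_+) \ge 1$ while $c_- > 0$ forces $\rank(C_-) \ge 1$, so $\rank(C') \le \rank(C_+) + \rank(C_-) = \rank(C)$ in every case (including $C = 0$, where $C' = 0$). For the majorization, the eigenvalue data of $C'$ is transparent: $\lambda^+(C') = (c_+,0,0,\ldots)$ and $\lambda^-(C') = (c_-,0,0,\ldots)$. Since the nonnegative eigenvalues of $C$ sum to $\trace(C_+) = c_+$, we obtain $\sum_{k=1}^n \lambda^+_k(C) \le c_+ = \sum_{k=1}^n \lambda^+_k(C')$ for every $n \ge 1$, and symmetrically with $-$ in place of $+$; hence $\lambda(C) \submaj \lambda(C')$. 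Because $C,C' \in \traceclass$ and $\trace(C) = c_+ - c_- = \trace(C')$, this upgrades to $\lambda(C) \maj \lambda(C')$.

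With both hypotheses in hand, \Cref{lem:subspace-restriction-majorization} immediately produces $X' \in \orbit(C')$ with $R_{X'} \le R_X$ and $\trace(X'A) = \trace(XA)$, which is the assertion. I do not expect a substantive obstacle: the only point deserving a moment's care is the rank comparison in the degenerate cases where $c_+ = 0$ or $c_- = 0$, in which $C'$ collapses to rank one (or to $0$) — precisely the situation flagged by the parenthetical ``$C$ positive or negative, even rank-$1$'' in the statement.
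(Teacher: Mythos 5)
Your proof is correct and follows essentially the same route as the paper: observe $\lambda(C) \maj \lambda(C')$ and then apply \Cref{lem:subspace-restriction-majorization}. The only cosmetic difference is that you verify $\rank(C) \ge \rank(C')$ uniformly in all cases, whereas the paper handles the definite cases ($C \ge 0$ or $C \le 0$) via the lemma directly and only invokes the rank comparison $\rank(C) \ge 2 = \rank(C')$ in the mixed-sign case.
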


\begin{proof}
  Notice $\lambda(C) \maj \lambda(C')$ trivially.
  If either $C \ge 0$ or $C \le 0$, then the result follows immediately from \Cref{lem:subspace-restriction-majorization}.
  Also, if $C$ is selfadjoint and is neither positive or negative, then $\rank(C) \ge 2 = \rank(C')$, so the result again follows from \Cref{lem:subspace-restriction-majorization}.
\end{proof}

Two more results which will be vital for us in this paper concern the supremum of the orbit-closed $C$-numerical range of selfadjoint operators.

\begin{proposition}[\protect{\cite[Proposition~5.1]{LP-2021-LaMA}}]
  \label{prop:c-numerical-range-maximum}
  Let $C \in \traceclass^+$ be a positive trace-class operator and let $A \in \K^+$ be a positive compact operator.
  Then
  \begin{equation*}
    \sup \ocnr(A) = \sum_{n=1}^{\infty} \lambda_n(C) \lambda_n(A),
  \end{equation*}
  and moreover the supremum is attained.
\end{proposition}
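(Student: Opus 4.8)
The plan is to prove $\sup\ocnr(A)\le\sum_n\lambda_n(C)\lambda_n(A)$ together with the reverse inequality, the latter witnessed by an explicit $X\in\orbit(C)$. First note that, since $A\in\K^+$ and $\orbit(C)\subseteq\traceclass^+$, we have $\trace(XA)=\trace(A^{1/2}XA^{1/2})\ge 0$ for every $X\in\orbit(C)$, so $\ocnr(A)\subseteq[0,\infty)$ and the supremum is meaningful. Also, from the characterization of $\orbit(C)$ for normal operators recalled above, $\orbit(C)$ consists precisely of the positive trace-class operators whose eigenvalue sequence equals $\lambda(C)$; hence it suffices to maximize $\trace(XA)$ over $X\in\traceclass^+$ with $\lambda(X)=\lambda(C)$.

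For the upper bound, fix such an $X$ and invoke the spectral decomposition of $A$: there is an orthonormal system $(g_k)$ of eigenvectors of $A$ with $Ag_k=\lambda_k(A)g_k$, and, evaluating the trace in an orthonormal basis of $\Hil$ extending $(g_k)$ by vectors of $\ker A$, one obtains $\trace(XA)=\sum_k\lambda_k(A)\langle Xg_k,g_k\rangle$. Ky Fan's maximum principle gives $\sum_{k=1}^n\langle Xg_k,g_k\rangle\le\sum_{k=1}^n\lambda_k(X)$ for all $n$, since the left-hand side is $\trace(XP)$ with $P$ the projection onto $\spans\{g_1,\dots,g_n\}$. Because $(\lambda_k(A))$ is nonnegative and nonincreasing, an Abel summation argument then yields $\sum_k\lambda_k(A)\langle Xg_k,g_k\rangle\le\sum_k\lambda_k(A)\lambda_k(X)=\sum_k\lambda_k(A)\lambda_k(C)$; all series involved converge because $\lambda(C)$ is summable and $A$ is bounded. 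Taking the supremum over $X$ gives the desired bound. (Alternatively one may simply invoke the standard trace inequality $\trace(XA)\le\sum_n\lambda_n(X)\lambda_n(A)$ for positive compact operators.)

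For attainment, choose an orthonormal sequence $(f_n)$ of eigenvectors of $A$ with $Af_n=\lambda_n(A)f_n$, ordered by nonincreasing eigenvalue and taken long enough to span $\overline{\operatorname{ran}A}$ (extended by an orthonormal basis of $\ker A$ when $\operatorname{ran}A$ is finite-dimensional), so that $\langle Af_n,f_n\rangle=\lambda_n(A)$ for every $n$ under the convention $\lambda_n(A)=0$ for $n>\rank A$. Let $X:=\sum_n\lambda_n(C)P_n$, where $P_n$ is the rank-one orthogonal projection onto $\spans\{f_n\}$. Then $X\ge 0$, $\trace(X)=\sum_n\lambda_n(C)=\trace(C)<\infty$, and $\lambda(X)=\lambda(C)$, so $X\in\orbit(C)$; and
\[
  \trace(XA)=\sum_n\lambda_n(C)\langle Af_n,f_n\rangle=\sum_n\lambda_n(C)\lambda_n(A),
\]
so this value belongs to $\ocnr(A)$. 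Combined with the upper bound, this proves the claimed equality and that the supremum is attained.

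The one delicate point is the upper bound, where the infinite sums must be handled with care: concretely one needs the discrete inequality ``$\sum_k u_kw_k\le\sum_k v_kw_k$ whenever $u,w\ge 0$, $w$ is nonincreasing, and the partial sums of $u$ are dominated by those of $v$,'' proved by Abel summation, plus a convergence check. Attainment is then routine once $\orbit(C)$ is identified with the positive trace-class operators of eigenvalue sequence $\lambda(C)$ and one notices that the maximizer should be simultaneously diagonalizable with $A$. An alternative route to the upper bound goes through \Cref{thm:c-numerical-range-via-majorization}: bound $\trace(XA)\le\trace(X_+A)$ and apply the same discrete inequality to $\lambda^+(X)\submaj\lambda^+(C)=\lambda(C)$; this, however, does not circumvent the Abel-summation step.
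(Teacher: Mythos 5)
Your proof is correct, and it is worth noting that this paper does not actually prove \Cref{prop:c-numerical-range-maximum}: it is imported verbatim from \cite[Proposition~5.1]{LP-2021-LaMA}, so there is no in-paper argument to compare against. Your route is the standard and complete one: you reduce to maximizing $\trace(XA)$ over $X \in \traceclass^+$ with $\lambda(X) = \lambda(C)$, which is legitimate because the equivalence recalled in Section~2 (for normal $C$, $X \in \orbit(C)$ iff $X$ is normal with $\lambda(X) = \lambda(C)$) identifies $\orbit(C)$ with exactly this set when $C \ge 0$, a compact normal operator with nonnegative eigenvalues being positive; the upper bound is von Neumann's trace inequality for positive compact operators, which you correctly derive from Ky Fan's maximum principle plus the Abel-summation lemma (the convergence check is harmless since $\sum_k \lambda_k(X)\lambda_k(A) \le \norm{A}\,\trace(C) < \infty$); and attainment is obtained by placing the eigenvalues of $C$ on an eigenbasis of $A$, where your convention of extending by an orthonormal basis of $\ker A$ correctly handles both the case $\rank(A) < \infty$ and the case $\rank(C) < \infty$ (the zero eigenvalues are absorbed by the infinite-dimensional complement, so $\lambda(X) = \lambda(C)$ in every case). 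The only stylistic remark is that the alternative ending you sketch via \Cref{thm:c-numerical-range-via-majorization} is redundant here, since for $X \in \orbit(C)$ one has $\lambda(X) = \lambda(C)$ outright and no majorization argument is needed beyond the trace inequality itself.
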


We note that in the theorem below, $(A-mI)_+$ is a positive compact operator, so it is subject to \Cref{prop:c-numerical-range-maximum}.

\begin{theorem}[\protect{\cite[Theorem~5.2]{LP-2021-LaMA}}]
  \label{thm:c-numerical-range-selfadjoint-formula}
  Let $C \in \traceclass^+$ be a positive trace-class operator and suppose $A \in B(\Hil)$ is selfadjoint.
  Let $m := \max \essspec(A)$.
  Then
  \begin{equation*}
    \sup \ocnr(A) = m\trace C + \sup \ocnr(A-mI)_+,
  \end{equation*}
  Moreover, letting $P := \chi_{[m,\infty)}(A)$, then $\sup \ocnr(A)$ is attained if and only if $\rank(C) \le \trace(P)$.
  In fact, when $X \in \orbit(C)$ attains the supremum, $XP = PX = X$.
\end{theorem}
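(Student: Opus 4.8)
The plan is to reduce everything to \Cref{prop:c-numerical-range-maximum}, applied to the positive compact operator $D := (A-mI)_+$, by means of the spectral projection $Q := \chi_{(m,\infty)}(A)$. This projection reduces $A$, equals the range projection of $D$, satisfies $QAQ = D + mQ$ and $Q^\perp A Q^\perp \le m Q^\perp$, and obeys $P = Q + Q_1$ with $Q_1 := \chi_{\{m\}}(A) = \ker(A - mI) \le Q^\perp$. For the upper bound, fix $X \in \orbit(C)$, so $X \ge 0$ and $\lambda(X) = \lambda(C)$. Since $Q$ reduces $A$,
\[
  \trace(XA) = \trace\bigl((QXQ)D\bigr) + m\trace(QXQ) + \trace\bigl((Q^\perp X Q^\perp)(Q^\perp A Q^\perp)\bigr),
\]
where the last term is at most $m\trace(Q^\perp X Q^\perp)$ because $Q^\perp A Q^\perp \le m Q^\perp$. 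Writing $QXQ = (X^{1/2}Q)^{*}(X^{1/2}Q)$ and noting $(X^{1/2}Q)(X^{1/2}Q)^{*} = X^{1/2}QX^{1/2} \le X$ shows $\lambda_j(QXQ) \le \lambda_j(X) = \lambda_j(C)$ for all $j$; the von~Neumann trace inequality and \Cref{prop:c-numerical-range-maximum} then give $\trace((QXQ)D) \le \sum_j \lambda_j(QXQ)\lambda_j(D) \le \sum_j \lambda_j(C)\lambda_j(D) = \sup\ocnr(D)$. Since $\trace(QXQ) + \trace(Q^\perp X Q^\perp) = \trace X = \trace C$, this proves $\sup\ocnr(A) \le \sup\ocnr(D) + m\trace C$.

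For the reverse inequality, which also yields the ``if'' half of the attainment claim, I would construct $X_0 \in \orbit(C)$ explicitly. Fix an orthonormal system $(e_n)_n$ of eigenvectors of $D$ with $De_n = \lambda_n(D)e_n$, and put $N := \rank C$. If $N \le \rank D$, let $X_0 := \sum_{n \le N} \lambda_n(C)\, e_n \otimes e_n$; then $X_0 \in \orbit(C)$, $R_{X_0} \le Q \le P$, and $\trace(X_0 A) = \sum_{n \le N} \lambda_n(C)(\lambda_n(D) + m) = \sup\ocnr(D) + m\trace C$, so the supremum is attained (and here $\rank C \le \rank D \le \trace P$). If $N > \rank D$, then $\rank D < \infty$, so $Q$ is finite rank and $\essspec(A\vert_{Q^\perp \Hil}) = \essspec(A) \ni m$. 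When moreover $\trace(Q_1) \ge N - \rank D$ --- equivalently $\rank C \le \trace P$ --- I take an orthonormal system $(g_n)_{\rank D < n \le N}$ in $Q_1\Hil$ and set $X_0 := \sum_{n \le \rank D} \lambda_n(C)\, e_n \otimes e_n + \sum_{n > \rank D} \lambda_n(C)\, g_n \otimes g_n \in \orbit(C)$; since $Ag_n = mg_n$, again $\trace(X_0 A) = \sup\ocnr(D) + m\trace C$, and now $R_{X_0} \le Q + Q_1 = P$. Otherwise, given $\epsilon > 0$, since $m \in \essspec(A\vert_{Q^\perp \Hil})$ I choose instead an orthonormal system $(g_n)_{n > \rank D}$ in $Q^\perp \Hil$ with $\innerprod{Ag_n}{g_n} \ge m - \epsilon_n$ and $\sum_{n > \rank D} \epsilon_n \lambda_n(C) < \epsilon$, so the associated $X_0 \in \orbit(C)$ satisfies $\trace(X_0 A) \ge \sup\ocnr(D) + m\trace C - \epsilon$. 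Altogether $\sup\ocnr(A) = \sup\ocnr(D) + m\trace C$, and $\rank C \le \trace P$ forces the supremum to be attained.

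For the converse direction and the final assertion of the theorem: if $X \in \orbit(C)$ attains the supremum, equality must hold throughout the upper-bound computation, so in particular $\trace\bigl((Q^\perp X Q^\perp)(mQ^\perp - A)\bigr) = 0$; as both factors are positive operators on $Q^\perp \Hil$, this forces $R_{Q^\perp X Q^\perp} \le \ker(A - mI) = Q_1$. Since $P^\perp \le Q^\perp$ and $P^\perp Q_1 = 0$, we get $P^\perp X P^\perp = P^\perp (Q^\perp X Q^\perp) P^\perp = 0$, hence $X^{1/2} P^\perp = 0$, i.e.\ $XP = PX = X$; in particular $R_X \le P$, so $\rank C = \rank X \le \rank P = \trace P$.

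The step I expect to be most delicate is the lower-bound construction: one must coordinate the relative sizes of $\rank C$ and $\rank D$ with the dimension of $\ker(A-mI)$, and in particular notice that when $\rank D = \infty$ no vectors outside $Q\Hil$ are ever needed --- so the assertion $m \in \essspec(A\vert_{Q^\perp \Hil})$, which can genuinely fail when $Q$ has infinite rank, is only invoked in the case $\rank D < \infty$, where it is immediate (removing a finite-rank projection does not change the essential spectrum). The two auxiliary inequalities $\lambda_j(QXQ) \le \lambda_j(X)$ and $\trace(YD) \le \sum_j \lambda_j(Y)\lambda_j(D)$ for positive trace-class $Y$ and positive compact $D$ are standard.
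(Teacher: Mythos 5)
Your argument is correct and complete as far as I can check; note that the present paper does not prove this statement at all but imports it from \cite[Theorem~5.2]{LP-2021-LaMA}, so there is no in-text proof to compare against, and your proof must be judged on its own terms. Your route --- decompose along $Q=\chi_{(m,\infty)}(A)$, which reduces $A$ and is the range projection of $D=(A-mI)_+$, bound the $Q$-corner via $\lambda_j(QXQ)\le\lambda_j(X)=\lambda_j(C)$, the von Neumann trace inequality and \Cref{prop:c-numerical-range-maximum}, bound the $Q^{\perp}$-corner by $m\trace(Q^{\perp}XQ^{\perp})$, and then realize the resulting bound exactly when $\rank(C)\le\trace(P)$ and approximately otherwise --- is a clean, self-contained reduction to \Cref{prop:c-numerical-range-maximum}, and it is more operator-theoretic than the sequence/majorization manipulations the cited source leans on elsewhere (compare the use of \cite[Lemma~5.2]{LP-2021-LaMA} in \Cref{prop:rank-condition-closure}). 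You correctly isolate the only place where $m\in\essspec\bigl(A\vert_{Q^{\perp}\Hil}\bigr)$ is needed, namely $\rank(D)<\infty$, where it is automatic; and your equality-case analysis, in which attainment forces $\trace\bigl((Q^{\perp}XQ^{\perp})\,Q^{\perp}(mI-A)Q^{\perp}\bigr)=0$, hence the range of $Q^{\perp}XQ^{\perp}$ lies in $\ker(A-mI)$, hence $P^{\perp}XP^{\perp}=0$ and $XP=PX=X$ with $\rank(C)=\rank(X)\le\trace(P)$, delivers both the ``only if'' half and the final assertion. Two small points worth making explicit: membership $X_0\in\orbit(C)$ for your diagonal constructions rests on the characterization, recorded in Section~2 from \cite[Proposition~3.1]{LP-2021-LaMA}, that a positive trace-class $X$ lies in $\orbit(C)$ if and only if $\lambda(X)=\lambda(C)$; and the expression $mQ^{\perp}-A$ in your equality case should be read as the compression $Q^{\perp}(mI-A)Q^{\perp}$, which is the positive operator actually paired with $Q^{\perp}XQ^{\perp}$.
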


Hiai and Nakamura established in \cite{HN-1991-TAMS} the following connection between submajorization of eigenvalue sequences of selfadjoint operators and closed convex hulls of unitary orbits.

\begin{proposition}[\protect{\cite[Theorem~3.3]{HN-1991-TAMS}}]
  \label{prop:wot-closure-convex-orbit-weak-majorization}
  For a selfadjoint compact operator $C \in \K^{sa}$,
  \begin{equation*}
    \{ X \in \K^{sa} \mid \lambda(X) \submaj \lambda(C) \} = \closure[\mathrm{wot}]{\conv\ugroup(C)} = \closure[\norm{\bigcdot}]{\conv\ugroup(C)}.
  \end{equation*}
\end{proposition}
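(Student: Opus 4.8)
The plan is to prove, writing $S := \setb{X \in \K^{sa}}{\lambda(X) \submaj \lambda(C)}$, the two-sided chain
\[
\closure[\norm{\bigcdot}]{\conv\ugroup(C)} \subseteq \closure[\mathrm{wot}]{\conv\ugroup(C)} \subseteq S \subseteq \closure[\norm{\bigcdot}]{\conv\ugroup(C)}.
\]
The first inclusion is automatic because the weak operator topology is coarser than the norm topology, so everything rests on the middle and last inclusions.

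For $\closure[\mathrm{wot}]{\conv\ugroup(C)} \subseteq S$, I would start from the Ky Fan maximum principle: for compact selfadjoint $Y$ and $n \in \nats$,
\[
\sum_{k=1}^n \lambda_k^+(Y) = \max\setb{\trace(PYP)}{P \text{ a projection},\ \rank P \le n}.
\]
Each functional $Y \mapsto \trace(PYP)$ with $P$ of finite rank is weak operator continuous and affine, so its supremum over $\conv\ugroup(C)$ equals its supremum over $\ugroup(C)$; hence any $X \in \closure[\mathrm{wot}]{\conv\ugroup(C)}$ is selfadjoint (selfadjointness being weak operator closed) and satisfies, for every projection $P$ of rank $\le n$,
\[
\trace(PXP) \le \sup_U \trace\bigl((U^{*}PU)C(U^{*}PU)\bigr) = \sum_{k=1}^n \lambda_k^+(C),
\]
the last equality being the Ky Fan identity for $C$ (the projections $U^{*}PU$ having rank $\le n$). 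Since $\lambda_k^+(C) \to 0$, the averages $\frac1n \sum_{k=1}^n \lambda_k^+(C)$ tend to $0$, so $\sum_{k=1}^n \lambda_k^+(C) = o(n)$; this forces $X_+$ to be compact, for otherwise some $\chi_{(\epsilon,\infty)}(X)$ has infinite rank and its rank-$n$ subprojections $P$ give $\trace(PXP) > \epsilon n$, eventually exceeding $\sum_{k=1}^n \lambda_k^+(C)$. Thus $X_+$ is compact with $\sum_{k=1}^n \lambda_k^+(X) \le \sum_{k=1}^n \lambda_k^+(C)$, and running the same argument with $-X$ and $-C$ shows $X_-$ is compact with $\sum_{k=1}^n \lambda_k^-(X) \le \sum_{k=1}^n \lambda_k^-(C)$. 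Hence $X \in \K^{sa}$ and $\lambda(X) \submaj \lambda(C)$, i.e.\ $X \in S$.

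The inclusion $S \subseteq \closure[\norm{\bigcdot}]{\conv\ugroup(C)}$ is the substantial one. Given $X \in S$ and $\epsilon > 0$, I would first truncate the spectral decomposition of the compact operator $X$, discarding the eigenvalues of modulus $\le \epsilon/3$, reducing to the case where $X$ is of finite rank (the truncation still lies in $S$ and is within $\epsilon/3$ in norm). Next I would truncate $C$ to the operator $C_{(m)}$ built from its $m$ largest-modulus eigenvalues; for $m$ large one checks that $\lambda(X) \submaj \lambda(C_{(m)})$, and since replacing $C$ by $C_{(m)}$ inside a convex combination $\sum_i t_i U_i(\bigcdot)U_i^{*}$ perturbs the outcome by at most $\norm{C - C_{(m)}} \to 0$, it suffices to approximate $X$ within $\epsilon/3$ by an element of $\conv\ugroup(C_{(m)})$. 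This reduces the whole inclusion to the finite-rank statement: \emph{if $X, D \in \K^{sa}$ are finite rank with $\lambda(X) \submaj \lambda(D)$, then $X \in \closure[\norm{\bigcdot}]{\conv\ugroup(D)}$.} To prove this, the key device is to use the infinitely many dimensions of $\Hil$ that $X$ and $D$ do not occupy: put $d^{\pm} := \trace(D_{\pm}) - \trace(X_{\pm}) \ge 0$ and, for large $N$, set $X_N := X + \tfrac{d^+}{N} Q_N^{+} - \tfrac{d^-}{N} Q_N^{-}$, where $Q_N^{+}, Q_N^{-}$ are rank-$N$ projections onto mutually orthogonal subspaces of $\ker X$, so that $\norm{X_N - X} = \max(d^+, d^-)/N \to 0$. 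A partial-sum computation shows that for all sufficiently large $N$ the ordered eigenvalue sequences satisfy the genuine majorization $\lambda(X_N) \maj \lambda(D)$: the traces now agree, the inequalities for the top eigenvalues are inherited from $\lambda(X) \submaj \lambda(D)$, and the remaining ones hold because the deficits $d^{\pm}$ have been spread thinly over $N$ fresh eigenvalues. Then the classical fact that a Hermitian $X_N$ with $\lambda(X_N) \maj \lambda(D)$ lies in $\conv\ugroup(D)$ — the Schur--Horn/Ky Fan characterization of unitary orbits, applied inside any subspace of dimension $\ge \max(\rank X_N, \rank D)$ containing the relevant ranges — gives $X_N \in \conv\ugroup(D)$, and hence $X \in \closure[\norm{\bigcdot}]{\conv\ugroup(D)}$.

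The step I expect to be the main obstacle is the bookkeeping just described: choosing a single $N$ making \emph{all} the finitely many majorization inequalities that define $\lambda(X_N) \maj \lambda(D)$ hold at once, handling the positive and negative parts separately (as $C$ need only be selfadjoint) and the degenerate configurations $X_+ = 0$, $D_+ = 0$, $X = 0$, and so on; the delicate point is that whenever one of these inequalities would hold only with equality in the limit $N \to \infty$, one must verify that such equality is in fact impossible. A smaller technical point is confirming that $\lambda(X) \submaj \lambda(C_{(m)})$ really does hold for $m$ large once $X$ has finite rank. As an alternative to the explicit approximation, one can obtain $S = \closure[\norm{\bigcdot}]{\conv\ugroup(C)}$ directly from the bipolar theorem for the duality $(\K^{sa})^{*} = \traceclass^{sa}$: the support function of $\conv\ugroup(C)$ at $A \in \traceclass^{sa}$ equals $\sum_k \lambda_k^+(C)\lambda_k^+(A) + \sum_k \lambda_k^-(C)\lambda_k^-(A)$ by the Hermitian von Neumann trace inequality, and an Abel-summation argument identifies the resulting bipolar set with $S$; the weak operator closure is then squeezed between $S$ and the norm closure.
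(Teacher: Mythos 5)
The paper itself offers no proof of this proposition: it is imported verbatim from Hiai and Nakamura, who obtain it within their general von Neumann--algebra majorization framework, so the only comparison possible is with that source rather than with an argument in the paper. Your proof is correct and takes a genuinely more elementary route. The organization norm-closure $\subseteq$ WOT-closure $\subseteq S \subseteq$ norm-closure is right; the Ky Fan argument for the middle inclusion is sound (selfadjointness is WOT-closed, $Y\mapsto\trace(PYP)$ is WOT-continuous for finite-rank $P$, and the $o(n)$ growth of $\sum_{k\le n}\lambda_k^+(C)$ forces $X_\pm$ compact before Ky Fan is applied to $X$); and your finite-rank scheme for $S\subseteq\closure[\norm{\bigcdot}]{\conv\ugroup(C)}$ does go through. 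In particular the delicate point you flag is harmless: with $d^\pm=\trace(D_\pm)-\trace(X_\pm)$, the only partial-sum constraints that could degenerate are those with $\rank(X_+)<n<\rank(D_+)$, and there the inequality $\trace(X_+)\le\sum_{k=1}^n\lambda_k^+(D)$ is automatically strict, since equality would force $\lambda_k^+(D)=0$ for $\rank(X_+)<k\le n$, contradicting $n<\rank(D_+)$; as there are only finitely many such $n$, and the constraints with $n\ge\rank(D_+)$ need only $\trace(X_+)+d^+=\trace(D_+)$, one large $N$ serves for all of them, and symmetrically for the negative parts. One step you should spell out: before invoking Schur--Horn/Birkhoff inside a finite-dimensional $V$ containing the range of $X_N$ with $\dim V\ge\rank(D)$, you must pass from the paper's notion of majorization (positive- and negative-part submajorization plus equal traces) to classical majorization of the zero-padded eigenvalue lists in $\reals^{\dim V}$; this implication is true --- for $j$ at most the number of nonnegative entries of the padded list for $D$ use the positive-part inequalities, and for larger $j$ write the top-$j$ partial sum as the full trace minus the bottom $\dim V-j$ entries and use the negative-part inequalities together with equal traces --- but it is not literally the same notion and deserves a line. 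Compared with the cited source, your argument buys a self-contained $B(\Hil)$ proof resting only on Ky Fan, Birkhoff/Schur--Horn and finite-rank approximation, at the price of the bookkeeping you describe; Hiai--Nakamura's machinery buys the result in far greater generality.
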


Note that for $C$ trace-class, since the trace-norm topology on $\conv \ugroup(C)$ is stronger than the norm topology (or the weak operator topology), we may replace $\ugroup(C)$ in \Cref{prop:wot-closure-convex-orbit-weak-majorization} with $\orbit(C)$.
\Cref{prop:wot-closure-convex-orbit-weak-majorization} also has consequences for the \weakstar{} closure of the convex hull of the unitary orbit of a selfadjoint operator.

\begin{corollary}
  \label{cor:submajorization-weak-star-compact}
  For a selfadjoint trace-class operator $C \in \traceclass^{sa}$, 
  \begin{equation*}
    \setb{ X \in \traceclass^{sa} }{ \lambda(X) \submaj \lambda(C) } = \closure[\wstar]{\conv\ugroup(C)},
  \end{equation*}
  and moreover this set is compact and metrizable in the \weakstar{} topology on $\traceclass$, hence also \weakstar{} sequentially compact.
\end{corollary}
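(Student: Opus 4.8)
The plan is to obtain the displayed equality from the Hiai--Nakamura description in \Cref{prop:wot-closure-convex-orbit-weak-majorization}, combined with the Ky Fan maximum principle and the elementary observation that, on trace-norm bounded sets, operator-norm convergence forces \weakstar{} convergence. Throughout, write $S := \setb*{X \in \traceclass^{sa}}{\lambda(X) \submaj \lambda(C)}$.

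First I would record the basic estimate underlying everything. Since $C \in \traceclass$, the eigenvalue sequence $\lambda(C)$ lies in $\ell_1$; hence for any $X \in \K^{sa}$ with $\lambda(X) \submaj \lambda(C)$, letting $n \to \infty$ in the defining inequalities gives $\sum_k \lambda_k^{+}(X) \le \sum_k \lambda_k^{+}(C)$ and $\sum_k \lambda_k^{-}(X) \le \sum_k \lambda_k^{-}(C)$, so on adding, $\sum_k \abs{\lambda_k(X)} \le \norm{C}_1$. Thus such an $X$ is automatically trace-class with $\norm{X}_1 \le \norm{C}_1$; in particular $S = \setb*{X \in \K^{sa}}{\lambda(X) \submaj \lambda(C)}$, and $S$ is contained in the trace-norm ball $B := \setb*{Y \in \traceclass}{\norm{Y}_1 \le \norm{C}_1}$. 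By Banach--Alaoglu $B$ is \weakstar{} compact, and since $\K$ is separable it is \weakstar{} metrizable.

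Next I would show that $S$ is \weakstar{} closed and convex. By the Ky Fan maximum principle, for selfadjoint compact $X$ and each $k \in \nats$, $\sum_{j=1}^{k}\lambda_j^{+}(X) = \sup \setb{\trace(XP)}{P^2 = P = P^{*},\ \rank P \le k}$; this is a supremum of \weakstar{} continuous affine functions of $X$ (each $P$ is finite rank, hence compact), so it is \weakstar{} lower semicontinuous and convex, and the same holds for $\sum_{j=1}^{k}\lambda_j^{-}(X)$ with $\trace(XP)$ replaced by $-\trace(XP)$. Hence $S$ is the intersection, over all $k$, of the sublevel sets $\setb*{X \in \traceclass^{sa}}{\sum_{j=1}^{k}\lambda_j^{+}(X) \le \sum_{j=1}^{k}\lambda_j^{+}(C)}$ and $\setb*{X \in \traceclass^{sa}}{\sum_{j=1}^{k}\lambda_j^{-}(X) \le \sum_{j=1}^{k}\lambda_j^{-}(C)}$, and is therefore \weakstar{} closed and convex (using that $\traceclass^{sa}$ is itself \weakstar{} closed). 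Since every unitary conjugate $UCU^{*}$ has $\lambda(UCU^{*}) = \lambda(C)$ and so lies in $S$, we get $\conv\ugroup(C) \subseteq S$ and therefore $\closure[\wstar]{\conv\ugroup(C)} \subseteq S$.

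For the reverse inclusion, let $X \in S$. By \Cref{prop:wot-closure-convex-orbit-weak-majorization}, $\conv\ugroup(C)$ is operator-norm dense in $S$, so there exist $Y_n \in \conv\ugroup(C)$ with $\norm{Y_n - X} \to 0$. Given $K \in \K$ and $\epsilon > 0$, choose a finite-rank $F$ with $\norm{K - F} < \epsilon$; using the estimates $\abs{\trace(TB)} \le \norm{T}\norm{B}_1$ and $\abs{\trace(TB)} \le \norm{T}_1\norm{B}$, together with $\norm{Y_n}_1, \norm{X}_1 \le \norm{C}_1$, one bounds $\abs{\trace((Y_n - X)K)} \le \norm{Y_n - X}\,\norm{F}_1 + 2\norm{C}_1\epsilon$, whence $\limsup_n \abs{\trace((Y_n - X)K)} \le 2\norm{C}_1\epsilon$; since $\epsilon$ was arbitrary, $\trace(Y_n K) \to \trace(XK)$ for every $K \in \K$, i.e. $Y_n \to X$ in the \weakstar{} topology. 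Thus $X \in \closure[\wstar]{\conv\ugroup(C)}$, completing the proof that $S = \closure[\wstar]{\conv\ugroup(C)}$. The remaining claims follow formally: $S$ is \weakstar{} closed (being a \weakstar{} closure) and is contained in the \weakstar{} compact, \weakstar{} metrizable set $B$, so $S$ is itself \weakstar{} compact and \weakstar{} metrizable, hence \weakstar{} sequentially compact. The one genuinely delicate step is the passage from operator-norm (or weak-operator) convergence to \weakstar{} convergence in this last paragraph; it uses the uniform trace-norm bound from the first paragraph in an essential way, and indeed fails for general trace-class sequences.
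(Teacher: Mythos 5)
Your proof is correct, and its overall skeleton matches the paper's: both arguments rest on \Cref{prop:wot-closure-convex-orbit-weak-majorization}, the uniform trace-norm bound $\norm{X}_1 \le \norm{C}_1$ for submajorized $X$, and Banach--Alaoglu plus separability of $\K$ for compactness, metrizability and sequential compactness. The one genuine difference is how \weakstar{} closedness of the submajorization set is obtained. The paper deduces it from the WOT-closedness already contained in Hiai--Nakamura, via the observation that on trace-norm bounded sets the \weakstar{} topology sits between the weak operator topology and the operator norm topology; you instead prove it directly, writing the set as an intersection (inside the \weakstar{}-closed set $\traceclass^{sa}$) of sublevel sets of the Ky Fan functionals $X \mapsto \sup\{\trace(XP) : P = P^{*} = P^2,\ \rank P \le k\}$, each a supremum of \weakstar{} continuous functionals and hence \weakstar{} lower semicontinuous. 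This is a valid and somewhat more self-contained route: it uses only the norm-closure half of \Cref{prop:wot-closure-convex-orbit-weak-majorization} (needed anyway for the inclusion $S \subseteq \closure[\wstar]{\conv\ugroup(C)}$), and it also makes explicit, via the finite-rank $2\epsilon$-approximation, the fact that operator-norm convergence implies \weakstar{} convergence on trace-norm bounded sets---a point the paper asserts without computation, and which, as you rightly note, genuinely requires the uniform trace-norm bound. The paper's version is shorter because it exploits the full strength of Hiai--Nakamura; yours trades that for an explicit convexity/semicontinuity argument. Either way the final conclusions coincide.
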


\begin{proof}
  We first remark that if $C \in \traceclass^{sa}$ and $X \in \K^{sa}$ with $\lambda(X) \submaj \lambda(C)$, then $X \in \traceclass$.
  Therefore, by \Cref{prop:wot-closure-convex-orbit-weak-majorization}
  \begin{align*}
    \setb{ X \in \traceclass^{sa} }{ \lambda(X) \submaj \lambda(C) }
    &= \setb{ X \in \K^{sa} }{ \lambda(X) \submaj \lambda(C) } \\
    &= \closure[\mathrm{wot}]{\conv\ugroup(C)} \\
    &= \closure[\norm{\bigcdot}]{\conv\ugroup(C)}.
  \end{align*}
  
  Since the \weakstar{} topology on $\traceclass$ is weaker, on trace-norm bounded sets, than the (operator) norm topology and stronger than the weak operator topology, we conclude that trace-norm bounded subsets of $\traceclass$ which are both weak operator closed and (operator) norm closed are also \weakstar{} closed.

  Finally, we note that $\setb{ X \in \traceclass^{sa} }{ \lambda(X) \submaj \lambda(C) }$ is trace-norm bounded by $\norm{C}_1$, and so the previous paragraph guarantees
  \begin{equation*}
    \setb{ X \in \traceclass^{sa} }{ \lambda(X) \submaj \lambda(C) } = \closure[\wstar]{\conv\ugroup(C)}.
  \end{equation*}
  Finally, the Banach--Alaoglu theorem implies that this set, being trace-norm bounded and \weakstar{} closed, is \weakstar{} compact and the \weakstar{} topology is metrizable (on this trace-norm bounded set), the latter because $\traceclass \cong \K^{*}$ and $\K$ is separable.
  Because compactness and sequential compactness are equivalent in metric spaces, this set is \weakstar{} sequentially compact as well.
\end{proof}

A cursory examination of the proof of \cite[Lemma~5.1]{LP-2021-LaMA} affords us the following result relating to extreme points (see \Cref{cor:extreme-points-submjaorization} for the connection) of the collection of operators whose eigenvalue sequences are submajorized by that of a fixed trace-class operator.

\begin{lemma}[\protect{\cite[Proof of Lemma~5.1]{LP-2021-LaMA}}]
  \label{lem:submajorization-convex-hull-of-orbits}
  For selfadjoint trace-class operators $X,C \in \traceclass^{sa}$ with $\lambda(X) \submaj \lambda(C)$, there is some $Y \in \traceclass^{sa}$ for which
  \begin{enumerate}
  \item $\lambda(X) \maj \lambda(Y) \submaj \lambda(C)$;
  \item $\orbit(Y) \subseteq \conv \bigcup \setb{ \orbit(C_{m_-,m_+}) }{ 0 \le m_{\pm} \le \rank(C_{\pm})}$,
  \end{enumerate}
  where $C_{m_-,m_+}$ is the operator $C (P^-_{m_-} + P^+_{m_+})$ where $\trace(P^{\pm}_{m_{\pm}}) = m_{\pm}$, and for some $\lambda_- \le 0 \le \lambda_+$, $\chi_{(-\infty,\lambda_-)}(C) \le P^-_{m_-} \le \chi_{(-\infty,\lambda_-]}(C)$ and $\chi_{(\lambda_+,\infty)}(C) \le P^+_{m_+} \le \chi_{[\lambda_+,\infty)}(C)$.

  In other words, $C_{m_-,m_+}$ is the selfadjoint operator whose eigenvalues are the smallest $m_-$ negative eigenvalues $C$ along with the largest $m_+$ positive eigenvalues of $C$, namely $-\lambda^-_1(C),\ldots,-\lambda^-_{m_-}(C)$ and $\lambda^+_1,(C),\ldots,\lambda^+_{m_+}(C)$, along with the eigenvalue $0$ repeated with multiplicity $\trace(I-P_{m_-}^- - P_{m_+}^+)$.
\end{lemma}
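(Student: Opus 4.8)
The plan is to produce $Y$ \emph{explicitly} as the operator obtained by truncating $C$ to its outermost eigenvalues with a single fractional remainder on each side, the two remainders tuned so that the positive and negative eigenvalue sequences of $Y$ slot between those of $X$ and those of $C$. First I would record that every conclusion depends only on the nonincreasing nonnegative $\ell_1$-sequences $x^{\pm}:=\lambda^{\pm}(X)$ and $c^{\pm}:=\lambda^{\pm}(C)$, and that, by \cref{def:majorization}, the hypothesis $\lambda(X)\submaj\lambda(C)$ says exactly $x^{+}\submaj c^{+}$ and $x^{-}\submaj c^{-}$ (comparison of partial sums of nonnegative sequences); in particular $s_{\pm}:=\sum_{k}x^{\pm}_{k}\le\sum_{k}c^{\pm}_{k}$.

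Next I would build the eigenvalue data of $Y$. On the positive side, let $m_{+}\ge 0$ be the index with $\sum_{k\le m_{+}}c^{+}_{k}\le s_{+}$ and remainder $r_{+}:=s_{+}-\sum_{k\le m_{+}}c^{+}_{k}\in[0,c^{+}_{m_{+}+1}]$ (in the boundary case $s_{+}=\sum_{k}c^{+}_{k}$ with $\rank(C_{+})=\infty$, simply set $y^{+}:=c^{+}$), and put $y^{+}:=(c^{+}_{1},\dots,c^{+}_{m_{+}},r_{+},0,0,\dots)$; define $m_{-},r_{-},y^{-}$ symmetrically. A short partial-sum check — using that a remainder-truncation is the $\submaj$-largest nonnegative sequence lying below $c^{+}$ with prescribed sum $s_{+}$ — gives simultaneously $x^{+}\maj y^{+}\submaj c^{+}$ and $x^{-}\maj y^{-}\submaj c^{-}$. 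Let $Y\in\traceclass^{sa}$ be any selfadjoint operator with $\lambda^{\pm}(Y)=y^{\pm}$, so $Y$ has finite rank outside the boundary case. Then $x^{\pm}\maj y^{\pm}$ forces $\lambda(X)\submaj\lambda(Y)$ and, by equality of the corresponding sums, $\trace(Y)=\trace(X)$, hence $\lambda(X)\maj\lambda(Y)$; and $y^{\pm}\submaj c^{\pm}$ gives $\lambda(Y)\submaj\lambda(C)$. This proves (i).

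For (ii), observe that $\conv\bigcup\{\orbit(C_{m_{-},m_{+}})\}$ is invariant under unitary conjugation, and that the construction below uses only $\lambda(Y)$ together with an orthonormal basis adapted to the spectral projections of the operator it is applied to; since by \cite[Proposition~3.1]{LP-2021-LaMA} the orbit $\orbit(Y)$ is exactly the set of selfadjoint operators with eigenvalue sequence $\lambda(Y)$, it therefore suffices to run the construction on the diagonal representative. Decompose $\Hil=\Hil_{+}\oplus\Hil_{-}\oplus\Hil_{0}$ with $\dim\Hil_{\pm}=m_{\pm}+1$ (read $m_{\pm}$, and drop $A^{1}_{\pm}$ below, when $r_{\pm}=0$) so that $Y=Y_{+}\oplus(-Y_{-})\oplus 0$, and on $\Hil_{+}$ write $Y_{+}=(1-t_{+})A^{0}_{+}+t_{+}A^{1}_{+}$ with $A^{0}_{+}=\diag(c^{+}_{1},\dots,c^{+}_{m_{+}},0)$, $A^{1}_{+}=\diag(c^{+}_{1},\dots,c^{+}_{m_{+}},c^{+}_{m_{+}+1})$ and $t_{+}=r_{+}/c^{+}_{m_{+}+1}\in[0,1]$; similarly $-Y_{-}=(1-t_{-})A^{0}_{-}+t_{-}A^{1}_{-}$ on $\Hil_{-}$. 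Multiplying out these two convex combinations exhibits
\[
  Y=\sum_{a,b\in\{0,1\}}(1-t_{+})^{1-a}t_{+}^{\,a}(1-t_{-})^{1-b}t_{-}^{\,b}\,\bigl(A^{a}_{+}\oplus A^{b}_{-}\oplus 0\bigr)
\]
as a convex combination of at most four operators, and each summand $A^{a}_{+}\oplus A^{b}_{-}\oplus 0$ is selfadjoint with eigenvalue sequence the $m_{+}+a$ largest positive eigenvalues of $C$ together with the negatives of the $m_{-}+b$ largest-modulus negative eigenvalues of $C$ — that is, equal to $\lambda(C_{m_{-}+b,\,m_{+}+a})$ — and so lies in $\orbit(C_{m_{-}+b,\,m_{+}+a})$, where $0\le m_{\pm}+1\le\rank(C_{\pm})$ because $r_{\pm}>0$ forces $\rank(C_{\pm})\ge m_{\pm}+1$. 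This gives (ii).

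The essential obstacle, and the reason one cannot simply take $Y=X$, is the possibly infinite tail of $X$: the corner operators are (apart from the boundary case) finite rank and $\conv$ denotes the ordinary finite convex hull, so $X$ itself generally does not lie in it, whereas the truncated $Y$ does by design. I expect the only genuine bookkeeping to be the boundary cases $s_{\pm}=\sum_{k}c^{\pm}_{k}$ with $\rank(C_{\pm})=\infty$, where the truncation is all of $C_{\pm}$, $m_{\pm}=\infty$, and the relevant corner operator carries an infinite subscript (which is permitted), and the verification of the two majorization chains at the single index $m_{\pm}+1$ at which the remainder $r_{\pm}$ enters.
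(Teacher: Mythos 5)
Your construction is correct, and it is worth noting that the paper itself supplies no in-text argument for this lemma: it is imported wholesale from the proof of Lemma~5.1 of \cite{LP-2021-LaMA}, so your explicit truncate-and-interpolate proof stands as a self-contained substitute for that citation, and it is the same kind of construction the citation is meant to deliver. The two main steps check out: writing $s_{\pm}=\sum_k\lambda^{\pm}_k(X)$, your $y^{\pm}=(c^{\pm}_1,\dots,c^{\pm}_{m_{\pm}},r_{\pm},0,\dots)$ satisfies $\lambda^{\pm}(X)\maj y^{\pm}\submaj\lambda^{\pm}(C)$ by the partial-sum comparison (up to index $m_{\pm}$ this is $\lambda(X)\submaj\lambda(C)$, beyond it it is $\sum_k y^{\pm}_k=s_{\pm}$), and since $\trace(Y)=s_+-s_-=\trace(X)$ this is exactly statement (i); and the bilinear expansion of the two one-parameter interpolations exhibits an element of $\orbit(Y)$ as a convex combination of at most four operators whose nonzero eigenvalues are precisely those of $C_{m_-+b,\,m_++a}$, with $m_{\pm}+1\le\rank(C_{\pm})$ whenever the corresponding weight is nonzero, so membership in $\orbit(C_{m_-+b,\,m_++a})$ follows from \cite[Proposition~3.1]{LP-2021-LaMA}, giving (ii). Two spots deserve explicit handling rather than a passing remark. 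First, ``the largest index $m_+$ with $\sum_{k\le m_+}c^+_k\le s_+$'' is ill-posed not only in the boundary case $\rank(C_+)=\infty$, $s_+=\trace(C_+)$ that you flag, but also when $\rank(C_+)<\infty$ and $s_+=\trace(C_+)$ (every $m\ge\rank(C_+)$ qualifies); there one takes $m_+=\rank(C_+)$ and $r_+=0$, and since $t_+=r_+/c^+_{m_++1}$ is then $0/0$, dropping $A^1_+$ (as you do) rather than setting $t_+=0$ is the correct convention. Second, the reduction to ``the diagonal representative'' should be argued exactly through \cite[Proposition~3.1]{LP-2021-LaMA} as you indicate, but for the right reason: when $Y$ has infinite rank an arbitrary $Z\in\orbit(Y)$ need not be unitarily conjugate to your chosen diagonal model (kernel dimensions can differ), yet $Z$ is compact selfadjoint with $\lambda(Z)=\lambda(Y)$, so the same interpolation can be run in an eigenbasis of $Z$ itself, and orbit membership of the summands is again by eigenvalue-sequence matching, which is insensitive to the kernel. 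With those two clarifications your proof is complete and needs nothing beyond \Cref{def:majorization} and \cite[Proposition~3.1]{LP-2021-LaMA}.
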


Actually, it is possible to prove the following fact as well, but we will not use it; it's slightly too weak for the purposes of this paper.
For this reason, we omit the proof but note that it can be obtained from \Cref{lem:submajorization-convex-hull-of-orbits} and \cite[Lemma~4.1]{LP-2021-LaMA}.

\begin{corollary}
  \label{cor:extreme-points-submjaorization}
  If $C \in \traceclass^{sa}$, then
  \begin{equation*}
    \ext \setb{ X \in \traceclass^{sa} }{ \lambda(X) \submaj \lambda(C) } \subseteq \bigcup \setb{ \orbit(C_{m_-,m_+})} { 0 \le m_{\pm} \le \rank(C_{\pm}) }.
  \end{equation*}
  Moreover, if $C \ge 0$, then the above inclusion is an equality.
\end{corollary}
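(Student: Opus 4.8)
The plan is to prove the two inclusions separately. Throughout, abbreviate $\mathcal{S} := \setb{ X \in \traceclass^{sa} }{ \lambda(X) \submaj \lambda(C) }$ and $\mathcal{K} := \conv \bigcup \setb{ \orbit(C_{m_-,m_+}) }{ 0 \le m_\pm \le \rank(C_\pm) }$. By \Cref{cor:submajorization-weak-star-compact}, $\mathcal{S}$ is a weak$^*$-compact, metrizable, convex set; it is also trace-norm closed and contains $\zop_{\Hil}$. Since the eigenvalue sequence of $C_{m_-,m_+}$ is obtained from $\lambda(C)$ by keeping only its largest $m_+$ positive and smallest $m_-$ negative terms, one has $\lambda(C_{m_-,m_+}) \submaj \lambda(C)$, so each $\orbit(C_{m_-,m_+}) \subseteq \mathcal{S}$, and by convexity $\mathcal{K} \subseteq \mathcal{S}$.

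\emph{The inclusion.} Let $X \in \ext \mathcal{S}$, so $\lambda(X) \submaj \lambda(C)$, and apply \Cref{lem:submajorization-convex-hull-of-orbits} to obtain $Y \in \traceclass^{sa}$ with $\lambda(X) \maj \lambda(Y)$ and $\orbit(Y) \subseteq \mathcal{K}$. I would then use $\lambda(X) \maj \lambda(Y)$ together with \cite[Lemma~4.1]{LP-2021-LaMA} to write $X$ as a trace-norm convergent convex combination $X = \sum_n t_n Z_n$ with $t_n > 0$, $\sum_n t_n = 1$, and each $Z_n \in \orbit(Y) \subseteq \mathcal{S}$. Extremality collapses this: writing $X = t_1 Z_1 + (1 - t_1) W$ with $W := \sum_{n \ge 2} \tfrac{t_n}{1-t_1} Z_n$, the operator $W$ is a trace-norm limit of convex combinations of $\set{Z_n}_{n\ge2} \cup \set{\zop_{\Hil}}$, hence $W \in \mathcal{S}$, so $Z_1 = X$ and thus $X \in \orbit(Y) \subseteq \mathcal{K}$. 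Finally, $\mathcal{K}$ is an \emph{ordinary} (not closed) convex hull, so $X$ is a \emph{finite} convex combination $\sum_{i=1}^N t_i W_i$ of elements $W_i \in \bigcup \orbit(C_{m_-,m_+}) \subseteq \mathcal{S}$; peeling off the first term exactly as above and invoking $X \in \ext\mathcal{S}$ forces $X = W_1$, so $X \in \bigcup \orbit(C_{m_-,m_+})$.

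\emph{The reverse inclusion when $C \ge 0$.} Here $C_{m_-,m_+}$ reduces to $C_m$, which has finite rank, so $\orbit(C_m) = \ugroup(C_m)$; moreover $\ext\mathcal{S}$ is invariant under unitary conjugation, since conjugating a nontrivial midpoint representation of $UXU^*$ by $U^*$ would contradict $X \in \ext\mathcal{S}$. Hence it suffices to prove $C_m \in \ext\mathcal{S}$ for every $m$. Suppose $C_m = \tfrac12(X_1 + X_2)$ with $X_1,X_2 \in \mathcal{S}$, and let $(e_k)$ be an eigenbasis of $C_m$ with $C_m e_k = \lambda_k(C) e_k$ for $k \le m$. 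I would peel the $e_k$ off one at a time, maintaining the inductive hypothesis that $X_i e_l = \lambda_l(C) e_l$ and $\lambda_l(X_i) = \lambda_l(C)$ for $l < k \le m$: then $e_1,\ldots,e_{k-1}$ realize the $k-1$ largest eigenvalues of $X_i$, so $\innerprod{X_i e_k}{e_k} \le \lambda_k(X_i) \le \lambda_k(C)$, the last step because $\sum_{l \le k}\lambda_l(X_i) \le \sum_{l \le k}\lambda_l(C)$ while $\sum_{l < k}\lambda_l(X_i) = \sum_{l<k}\lambda_l(C)$; since $\innerprod{X_1 e_k}{e_k} + \innerprod{X_2 e_k}{e_k} = 2\lambda_k(C)$, all these inequalities are equalities, and positivity of $X_i$ then upgrades $\innerprod{X_i e_k}{e_k} = \lambda_k(X_i)$ to $X_i e_k = \lambda_k(C) e_k$. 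After $m$ steps $X_1 + X_2 = 2C_m$ vanishes on $\set{e_1,\ldots,e_m}^\perp$, so $X_1$ and $X_2$ vanish there by positivity, and since the $e_k$ are eigenvectors of the selfadjoint $X_i$ this yields $X_1 = X_2 = C_m$.

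\emph{The main obstacle} is the middle move of the first inclusion: passing from $\lambda(X) \maj \lambda(Y)$ to $X \in \orbit(Y)$. On its own, $\lambda(X)\maj\lambda(Y)$ only places $X$ in the trace-norm closed convex hull of $\orbit(Y)$, and the weak$^*$-closure of $\orbit(Y)$ is typically strictly larger than $\orbit(Y)$ (one can push eigenvalues off to infinity), so Milman's theorem does not close the gap; one genuinely needs \cite[Lemma~4.1]{LP-2021-LaMA} in the sharp form that a majorized operator is an honest countable convex combination of unitary conjugates, and then combine this with extremality in the \emph{compact} convex set $\mathcal{S}$ to kill the combination. Everything else — the reduction through \Cref{lem:submajorization-convex-hull-of-orbits}, the collapse of the finite convex hull $\mathcal{K}$, and the peeling argument for the reverse inclusion — is routine.
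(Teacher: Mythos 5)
Your first inclusion follows exactly the route the paper itself indicates: the paper omits the proof of this corollary, remarking only that it can be obtained from \Cref{lem:submajorization-convex-hull-of-orbits} together with \cite[Lemma~4.1]{LP-2021-LaMA}, and that is precisely your argument --- reduce via \Cref{lem:submajorization-convex-hull-of-orbits} to a $Y$ with $\lambda(X) \maj \lambda(Y)$ and $\orbit(Y) \subseteq \conv \bigcup \orbit(C_{m_-,m_+})$, pass from majorization to membership connected with $\orbit(Y)$ via Lemma~4.1, and let extremality of $X$ in the convex, trace-norm closed, weak$^{*}$-compact set $\setb{ Z \in \traceclass^{sa} }{ \lambda(Z) \submaj \lambda(C) }$ collapse first the countable combination and then the finite one. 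The bookkeeping is correct (the tail $W$ stays in the set because the set is convex, trace-norm closed and contains $\zop_{\Hil}$), and your remark that Milman's theorem cannot substitute here --- since the weak$^{*}$ closure of $\orbit(Y)$ is strictly larger than $\orbit(Y)$ --- correctly identifies why the sharp, honest-convex-combination form of \cite[Lemma~4.1]{LP-2021-LaMA} is what must be invoked; that exact form cannot be verified from the present paper alone, but it is the form the authors' own pointer presupposes, so this is a citation-strength issue rather than a logical gap.

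The converse for $C \ge 0$ is entirely yours (the paper offers no argument), and the peeling induction is sound, with two small repairs. First, you use positivity of the $X_i$ several times; this does hold, but deserves the one-line justification that $C \ge 0$ forces $\sum_{k=1}^n \lambda^-_k(X_i) \le \sum_{k=1}^n \lambda^-_k(C) = 0$, hence $X_i \ge 0$. Second, the reduction ``$C_m$ has finite rank, so $\orbit(C_m) = \ugroup(C_m)$'' fails in the case $m = \rank(C) = \infty$, where $C_m = C$ and $\orbit(C)$ may strictly contain $\ugroup(C)$; this case is part of the asserted equality, so it must be covered. The fix is immediate: your peeling argument never actually uses finite rank or unitary conjugation --- it applies verbatim to an arbitrary $X \in \orbit(C)$, i.e.\ a positive trace-class operator with $\lambda(X) = \lambda(C)$, diagonalized in its own eigenbasis, with the final step reading that $X_1 + X_2 = 2X$ vanishes on $\ker X$, whence $X_1 = X_2 = X$ by positivity. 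With those touch-ups the proposal is complete and, for the inclusion, coincides with the proof the paper intended but omitted.
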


\section{Submajorization and the closure}
\label{sec:submajorization-closure}

In this section we prove our main theorem which characterizes $\closure{\ocnr(A)}$ in terms of submajorization and the essential numerical range (see \Cref{thm:main-theorem}).
We begin with a basic result concerning trace-class operators which converge \weakstar{} to zero.

\begin{lemma}
  \label{lem:weak-star-tail-convergence}
  Suppose that $\seq{Y_k}_{k=1}^{\infty}$ is a sequence in $\traceclass$ with $Y_k \xrightarrow{\wstar} 0$.
  For any finite projection $P$ and $A \in B(\Hil)$,
  \begin{equation*}
     \trace(Y_k A) - \trace(P^{\perp} Y_k P^{\perp} A) \to 0.
  \end{equation*}
\end{lemma}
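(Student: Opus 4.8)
The plan is to expand the difference $\trace(Y_k A) - \trace(P^\perp Y_k P^\perp A)$ using the decomposition $I = P + P^\perp$ in two places and to show that every term involving at least one factor of $P$ tends to zero. Write $Y_k = (P+P^\perp)Y_k(P+P^\perp) = PY_kP + PY_kP^\perp + P^\perp Y_k P + P^\perp Y_k P^\perp$. Hence
\begin{equation*}
  \trace(Y_k A) - \trace(P^\perp Y_k P^\perp A) = \trace(PY_kP A) + \trace(PY_kP^\perp A) + \trace(P^\perp Y_k P A).
\end{equation*}
By the trace property, these three terms equal $\trace(Y_k (PAP))$, $\trace(Y_k (P^\perp A P))$, and $\trace(Y_k (PAP^\perp))$, respectively. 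So it suffices to show $\trace(Y_k B) \to 0$ for any operator $B$ of the form $PAQ$ where at least one of $P,Q$ is the \emph{finite} projection $P$ (the other being $P$ or $P^\perp$).

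The key point is that any such $B$ is a \emph{finite-rank} operator, hence compact: $PAQ$ has range contained in $P\Hil$ when the left factor is $P$, and has support projection dominated by $P$ (so finite rank) when the right factor is $P$; in all three cases $\rank(B) \le \trace(P) < \infty$. Since $Y_k \xrightarrow{\wstar} 0$ means precisely $\trace(Y_k K) \to 0$ for every $K \in \K$ — in particular for every finite-rank operator — we get $\trace(Y_k B) \to 0$ for each of the three operators $B$ above. Summing the three limits gives the claim.

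I do not anticipate a genuine obstacle here; the only thing to be careful about is the bookkeeping with the trace: one must justify cyclically permuting inside the trace, which is valid because $Y_k \in \traceclass$ and all the other factors are bounded, so every product under consideration is trace class and the identity $\trace(ST) = \trace(TS)$ applies. It is also worth noting explicitly that we only use \weakstar{} convergence against \emph{compact} operators — the hypothesis $A \in B(\Hil)$ (not necessarily compact) is harmless precisely because sandwiching $A$ between the finite projection $P$ on at least one side produces a finite-rank, hence compact, operator. This is the whole reason the finite projection $P$ is needed in the statement.
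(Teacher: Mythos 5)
Your proof is correct and follows essentially the same route as the paper's: expand using $I = P + P^{\perp}$, cyclically permute inside the trace so that each remaining term has the form $\trace(Y_k B)$ with $B$ finite-rank (hence compact), and apply the \weakstar{} convergence $Y_k \to 0$ against these compact operators. The only difference is cosmetic bookkeeping: you split into three terms where the paper groups them into two ($\trace(Y_k(AP))$ and $\trace(Y_k(PAP^{\perp}))$), but the idea and justification are identical.
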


\begin{proof}
  Notice that $Y_k A - P^{\perp} Y_k P^{\perp} A = P Y_k A + P^{\perp} Y_k P A$.
  Then
  \begin{align*}
    \trace(Y_k A) - \trace(P^{\perp} Y_k P^{\perp} A)
    &= \trace(P Y_k A) + \trace(P^{\perp} Y_k P A) \\
    &= \trace(Y_k (AP)) + \trace( Y_k (PAP^{\perp}))
  \end{align*}
  converges to zero since $AP,PAP^{\perp} \in \K$ and $Y_k \xrightarrow{\wstar} 0$.
\end{proof}

We now prove the key technical lemma.

\begin{lemma}
  \label{lem:key-lemma}
  Let $\seq{Y_k}_{k=1}^{\infty} \subseteq \traceclass^{sa}$ be a sequence of selfadjoint trace-class operators such that $\trace(Y_k) = c$ is a positive constant and $Y_k \xrightarrow{\wstar} 0$.
  Moreover, suppose that there is some $X \in \traceclass^+$ for which $Y_k \ge -X$ for all $k \in \nats$.
  If $A \in B(\Hil)$ and $\trace(Y_k A) \to \mu$, then for any finite projection $P$ and any $\epsilon > 0$ there is some $y \in P^{\perp} \Hil$ such that $\abs{c \innerprod{Ay}{y} - \mu} < \epsilon$.
\end{lemma}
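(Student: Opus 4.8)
The plan is to reduce the statement to a known fact about essential numerical ranges by passing from $Y_k$ to its positive part (restricted away from $P$) and showing that, asymptotically, the trace pairing $\trace(Y_kA)$ is witnessed by a single unit vector orthogonal to $P$. First I would use \Cref{lem:weak-star-tail-convergence}: since $Y_k \xrightarrow{\wstar} 0$, replacing $Y_k$ by $P^\perp Y_k P^\perp$ changes $\trace(Y_kA)$ by something tending to zero, so without loss of generality I may assume each $Y_k$ is supported on $P^\perp\Hil$ and work entirely inside that subspace (note $\trace(P^\perp Y_k P^\perp) = \trace(Y_k) - \trace(PY_kP) \to c$ by the same lemma applied to $P^\perp Y_k P^\perp$ paired against the identity restricted to $P\Hil$, or more directly since $PY_kP \xrightarrow{\wstar} 0$ in the finite-dimensional space $P\Hil$). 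So fix a large $k$; I want a unit vector $y \in P^\perp\Hil$ with $\abs{c\innerprod{Ay}{y} - \mu} < \epsilon$.

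The next step is to control the negative part of $Y_k$. The hypothesis $Y_k \ge -X$ with $X \in \traceclass^+$ forces $(Y_k)_- \le X$, so $\trace((Y_k)_-) \le \trace(X)$ uniformly in $k$; combined with $Y_k \xrightarrow{\wstar} 0$ and $X$ trace-class (hence compact, so its tail eigenvalue sums are small), the negative part $(Y_k)_-$ is small in trace norm on the complement of any sufficiently large finite projection, and moreover the positive part satisfies $\trace((Y_k)_+) = \trace(Y_k) + \trace((Y_k)_-) \to c' $ for some $c' \in [c, c+\trace X]$ along a subsequence — but actually I can do better: since $Y_k \xrightarrow{\wstar} 0$ and $(Y_k)_- \le X$, one shows $\trace((Y_k)_-) \to 0$. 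Indeed, $0 \le (Y_k)_- \le X$ and $(Y_k)_- \xrightarrow{\wstar} 0$ (because $Y_k \xrightarrow{\wstar}0$ and $(Y_k)_+ = Y_k + (Y_k)_- \ge 0$ is \weakstar{} bounded, so after extracting a subsequence $(Y_k)_\pm$ both converge \weakstar{} to positive limits summing to zero, forcing both limits to be zero); then $\trace((Y_k)_- ) \le \trace((Y_k)_- \chi_{[\delta,\infty)}(X)) + \delta\cdot\trace(\chi_{[\delta,\infty)}(X)$-complement stuff$)$ — the first term tends to zero because $(Y_k)_-\xrightarrow{\wstar}0$ and $\chi_{[\delta,\infty)}(X)$ is finite rank, while $\trace((Y_k)_-\chi_{[0,\delta)}(X)) \le \trace(X\chi_{[0,\delta)}(X))$ which is controlled. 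Thus $\trace((Y_k)_-) \to 0$ and consequently $\trace((Y_k)_+) \to c$ and $\trace((Y_k)_+ A) \to \mu$.

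Now the heart of the argument: $(Y_k)_+ \in \traceclass^+$ is positive with trace tending to $c$ and $(Y_k)_+ \xrightarrow{\wstar} 0$, and it is supported on $P^\perp\Hil$ (up to the negligible corrections above). Write its spectral decomposition $(Y_k)_+ = \sum_j t^{(k)}_j e^{(k)}_j \otimes e^{(k)}_j$ with $t^{(k)}_j \ge 0$, $\sum_j t^{(k)}_j \to c$, and the unit vectors $e^{(k)}_j \in P^\perp\Hil$. Then $\trace((Y_k)_+A) = \sum_j t^{(k)}_j \innerprod{Ae^{(k)}_j}{e^{(k)}_j}$ is (approximately) a convex combination, scaled by $\approx c$, of the numbers $\innerprod{Ae^{(k)}_j}{e^{(k)}_j}$. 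Since $(Y_k)_+ \xrightarrow{\wstar} 0$, for each fixed $j$ the vector $e^{(k)}_j$ tends weakly to zero as $k \to \infty$ whenever $t^{(k)}_j$ stays bounded below (one extracts: the eigenvectors carrying non-negligible weight eventually live arbitrarily far out), and more importantly, given any finite projection $Q \ge P$, for $k$ large the mass $\trace(Q(Y_k)_+Q)$ is small, so the bulk of the weight $t^{(k)}_j$ sits on eigenvectors $e^{(k)}_j$ that are nearly orthogonal to $Q$. By a standard finite-dimensional convexity fact, a scaled convex combination (total weight $\approx c$) of scalars $\innerprod{Ay_j}{y_j}$ that equals (approximately) $\mu$ forces at least one term with $\innerprod{Ay_j}{y_j}$ close to $\mu/c$ — more carefully, the point $\mu/c$ (in the limit) lies in the closed convex hull of the cluster values of $\innerprod{Ae^{(k)}_j}{e^{(k)}_j}$ over eigenvectors carrying non-vanishing normalized weight, and all such cluster values are themselves approximable by $\innerprod{Ay}{y}$ for unit $y \perp P$ (since those eigenvectors are eventually nearly orthogonal to $P$). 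Because $\nr(P^\perp A P^\perp|_{P^\perp\Hil})$, wait — rather, because the set $\set{\innerprod{Ay}{y} : y \in P^\perp\Hil, \norm y = 1}$ is convex (it is a numerical range), the point $\mu/c$ lies within $\epsilon/c$ of it, which is exactly the claim. I expect the main obstacle to be making the convex-combination-to-single-vector extraction rigorous in the presence of the two errors (the \weakstar{} tail corrections and the negative part), i.e., choosing $Q$ large enough relative to $P$, then $k$ large enough relative to $Q$, so that all the approximations compound to less than $\epsilon$; this is a careful but routine three-$\epsilon$ argument once the structural facts $\trace((Y_k)_-) \to 0$ and the localization of $(Y_k)_+$'s eigenmass are in hand.
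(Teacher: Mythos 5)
Your overall strategy --- compress away from a finite projection, then convert the trace pairing of a trace-class operator of trace roughly $c$ into $c\innerprod{Ay}{y}$ for a single unit vector via convexity of the numerical range of the compression --- is essentially the paper's argument (the paper packages the last step as a rank-$\le 2$ majorization reduction, \Cref{cor:subspace-restrction-rank-2-majorization}). However, there is a genuine gap in your reduction to positive parts. The hypothesis $Y_k \ge -X$ does \emph{not} force the operator inequality $(Y_k)_- \le X$ (only the trace inequality $\trace((Y_k)_-) \le \trace(X)$ holds, by compressing to the range projection of $(Y_k)_-$), and your claim that $\trace((Y_k)_-) \to 0$ is false. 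Your argument for it is also flawed: from $(Y_k)_+ - (Y_k)_- = Y_k \xrightarrow{\wstar} 0$ you can only conclude that the \weakstar{} subsequential limits $Z_\pm$ of $(Y_k)_\pm$ are \emph{equal}, not that they vanish. Concretely, let $X$ be the rank-one projection onto $e_1$, let $A = X$, and let $Y_k$ be supported on $\spans\set{e_1,e_k}$ with matrix $\begin{psmallmatrix} 0 & 1 \\ 1 & 1 \end{psmallmatrix}$ in that basis. Then $\trace(Y_k) = 1 = c$, $Y_k + X \ge 0$, and $Y_k \xrightarrow{\wstar} 0$ because $\trace(Y_k K) = \innerprod{Ke_1}{e_k} + \innerprod{Ke_k}{e_1} + \innerprod{Ke_k}{e_k} \to 0$ for every compact $K$; yet $\trace((Y_k)_-) = \tfrac{\sqrt{5}-1}{2}$ for every $k$. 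Moreover $\trace(Y_k A) = 0 = \mu$ while $\trace((Y_k)_+ A)$ is a strictly positive constant and $\trace((Y_k)_+) = 1 + \tfrac{\sqrt5 - 1}{2} \ne c$, so replacing $Y_k$ by $(Y_k)_+$ genuinely changes both the constant and the limit, and the rest of your argument no longer targets the right quantities.

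The point you are missing is that the domination $Y_k \ge -X$ controls the negative part only \emph{after} compressing away a large finite piece of $X$: choosing a finite spectral projection $Q$ of $X$ with $\trace(Q^\perp X Q^\perp) < \delta$ and setting $R = P \vee Q$, one gets $R^\perp Y_k R^\perp \ge -R^\perp X R^\perp$ and hence $\trace\big((R^\perp Y_k R^\perp)_-\big) \le \trace(Q^\perp X Q^\perp) < \delta$, even though $\trace((Y_k)_-)$ itself need not be small (in the example above the persistent negative mass sits across $e_1$, which $Q$ removes). This is exactly how the paper proceeds: compress by $R^\perp$ (using \Cref{lem:weak-star-tail-convergence} and $\abs{\trace(RY_k)} \to 0$ to keep the trace near $c$ and the pairing near $\mu$), and then handle the \emph{small but nonzero} negative part of the compression by reducing to a rank-$\le 2$ operator with eigenvectors in $R^\perp\Hil$, absorbing the negative eigenvalue's contribution into the $\epsilon$ estimate. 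Your convexity-of-the-numerical-range step is a fine substitute for that last reduction, but only once the negative part has been made small by compression rather than (incorrectly) made to vanish.
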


\begin{proof}
  Suppose that $\seq{Y_k}_{k=1}^{\infty}$, $A$, $\mu$ are given with the properties and relationships specified in the statement.
  Let $\epsilon > 0$ and suppose that $P$ is any finite projection.

  Since $\trace(Y_k A) \to \mu$, there is some $N_1$ such that for all $k \ge N_1$
  \begin{equation}
    \label{eq:tr-Y_k-A-close-to-mu}
    \abs{\trace(Y_k A) - \mu} < \gamma := \frac{\epsilon}{3}.
  \end{equation}
  Let $Q$ be a finite spectral projection of $X$ so that $\trace(Q^{\perp} X Q^{\perp}) < \delta := \frac{\epsilon}{3(\norm{A} + 1)}$.
  Let $R = P \vee Q$ be the projection onto $P\Hil + Q\Hil$, which is a finite projection since both $P,Q$ are finite.
  Note that for all $k$, since $Y_k \ge -X$, then $R^{\perp} Y_k R^{\perp} \ge - R^{\perp} X R^{\perp}$, and hence%
  \footnote{%
    This is a general fact: if $X$ is positive and $Y$ is selfadjoint with $Y \ge -X$, then $\trace(Y_-) \le \trace(X)$.
    Indeed, if $P$ is the range projection of $Y_-$, then we have $Y_- = P(-Y)P \le PXP$, hence $\trace(Y_-) \le \trace(PXP) \le \trace(X)$.%
  }
  \begin{equation}
    \label{eq:tr-R-perp-Y_k-R-perp-neg-small}
    \trace(R^{\perp} Y_k R^{\perp})_- \le \trace(R^{\perp} X R^{\perp}) \le \trace(Q^{\perp} X Q^{\perp}) < \delta = \frac{\epsilon}{3(\norm{A} + 1)}.
  \end{equation}
  Since $Y_k \xrightarrow{\wstar} 0$ and $R \in \K$, there is some $N_2$ such that for all $k \ge N_2$,
  \begin{equation}
    \label{eq:tr-R-Y_k-small}
    \abs{\trace(RY_k)} = \abs{\trace(RY_k R)} < \eta := \min \set*{ \frac{\epsilon}{3(\norm{A} + 1)}, \frac{c}{2} }.
  \end{equation}
  By \Cref{lem:weak-star-tail-convergence} there is some $N_3$ such that for all $k \ge N_3$,
  \begin{equation}
    \label{eq:tr-R-perp-Y_k-R-perp-A-close-to-tr-Y_k-A}
    \abs{\trace(R^{\perp} Y_k R^{\perp} A) - \trace(Y_k A)} < \zeta := \frac{\epsilon}{3}.
  \end{equation}

  Now, any selfadjoint trace-class operator $X$ is majorized by the rank-2 selfadjoint operator $X'$ whose nonzero eigenvalues\footnotemark{} are $\trace(X_+) = \trace(X) + \trace(X_-)$ and $-\trace(X_-)$.

  \footnotetext{%
    if either or both of these eigenvalues are zero, then $X'$ has rank 1 or 0.%
  }

  Consequently, applying this fact to $R^{\perp} Y_N R^{\perp}$, by \Cref{cor:subspace-restrction-rank-2-majorization} there is some $Y \in \orbit((R^{\perp} Y_N R^{\perp})')$ with $R_Y \le R_{R^{\perp} Y_N R^{\perp}}$, where $N := \max_{1 \le i \le 3} N_i$, for which $\trace(YA) = \trace(R^{\perp} Y_N R^{\perp}A)$.
  Thus, $RY = YR = 0$.
  Let $y,y' \in R^{\perp} \Hil$ be the unit eigenvectors of $Y$ corresponding to the positive and negative eigenvalues.
  In our case, since $Y \in \orbit((R^{\perp} Y_N R^{\perp})')$, and $\lambda(R^{\perp} Y_N R^{\perp}) \maj \lambda((R^{\perp} Y_N R^{\perp})')$, and using \eqref{eq:tr-R-Y_k-small},
  \begin{equation*}
    \trace(Y) = \trace((R^{\perp} Y_N R^{\perp})') = \trace(R^{\perp} Y_N R^{\perp}) = \trace(Y_N) - \trace(RY_N) \ge c - \eta > 0,
  \end{equation*}
  so at least $y$ exists.
  If $Y$ has no negative eigenvalue, simply set $y' = 0$.
  Then
  \begin{equation}
    \label{eq:y-y'-eigenvectors}
    \trace(R^{\perp} Y_N R^{\perp} A) = \trace(YA) = \trace(R^{\perp} Y_N R^{\perp})_+ \innerprod{Ay}{y} + \trace(R^{\perp} Y_N R^{\perp})_- \innerprod{Ay'}{y'}.
  \end{equation}
  Note that
  \begin{align*}
    \trace(R^{\perp} Y_N R^{\perp})_+
    &= \trace(R^{\perp} Y_N R^{\perp}) + \trace(R^{\perp} Y_N R^{\perp})_- \\
    &= \trace(Y_N) - \trace(RY_N) + \trace(R^{\perp} Y_N R^{\perp})_- \\
    &= c - \trace(RY_N) + \trace(R^{\perp} Y_N R^{\perp})_-.
  \end{align*}
  Therefore, by the previous display and rearranging \eqref{eq:y-y'-eigenvectors},
  \begin{equation}
    \label{eq:c-innerprod-Ay-y}
    \begin{aligned}
      c \innerprod{Ay}{y} &= (\trace(RY_N) - \trace(R^{\perp} Y_N R^{\perp})_- + \trace(R^{\perp} Y_N R^{\perp})_+) \innerprod{Ay}{y} \\
      &= (\trace(RY_N) - \trace(R^{\perp} Y_N R^{\perp})_-) \innerprod{Ay}{y} \\
      &\qquad - \trace(R^{\perp} Y_N R^{\perp})_- \innerprod{Ay'}{y'} + \trace(R^{\perp} Y_N R^{\perp}A).
    \end{aligned}
  \end{equation}

  Finally, we obtain by applying \eqref{eq:c-innerprod-Ay-y}, the triangle inequality and the inequalities, \labelcref{eq:tr-Y_k-A-close-to-mu,eq:tr-R-perp-Y_k-R-perp-neg-small,eq:tr-R-Y_k-small,eq:tr-R-perp-Y_k-R-perp-A-close-to-tr-Y_k-A}
  \begin{align*}
    \abs{c \innerprod{Ay}{y} - \mu} &\le \abs*{ (\trace(RY_N) - \trace(R^{\perp} Y_N R^{\perp})_-) \innerprod{Ay}{y} - \trace(R^{\perp} Y_N R^{\perp})_- \innerprod{Ay'}{y'} } \\
                                    &\qquad  + \abs{ \trace(R^{\perp} Y_N R^{\perp}A) - \mu } \\
                                    &\le (\eta + \delta) \norm{A} + \delta \norm{A} + \abs{ \trace(R^{\perp} Y_N R^{\perp}A) - \trace(Y_N A) } + \abs{ \trace(Y_N A) - \mu } \\
                                    &\le (\eta + 2\delta)\norm{A} + \zeta + \gamma < \epsilon.
  \end{align*}
  Since $y \in R^{\perp} \Hil \subseteq P^{\perp} \Hil$, this completes the proof.
\end{proof}

\Cref{lem:key-lemma} leads to a dichotomy for \weakstar{} convergent sequences $\seq{X_k}$ in $\orbit(C)$ for which $\trace(X_k A)$ converges in $\closure{\ocnr(A)}$.

\begin{proposition}
  \label{prop:weak-star-convergence-essential-numerical-range}
  Let $C \in \traceclass^+$ be a positive trace-class operator and consider a sequence $\seq{X_k}$ in $\orbit(C)$ converging to $X$ in the \weakstar{} topology on $\traceclass$.
  If $A \in B(\Hil)$ and $\trace(X_k A) \to x$, then either
  \begin{enumerate}
  \item $\trace(X) = \trace(C)$, in which case $X_k \to X$ in trace norm; or
  \item $\trace(X) < \trace(C)$, in which case $x - \trace(XA) \in \trace(C-X) \essnr(A)$.
  \end{enumerate}
\end{proposition}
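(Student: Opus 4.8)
The plan is to split along the stated dichotomy, after first checking it is exhaustive. Since $C \ge 0$ and $X_k \in \orbit(C)$, each $X_k$ is positive with $\trace(X_k) = \trace(C)$. Testing $X_k \xrightarrow{\wstar} X$ against the (compact) rank-one projections shows $X \ge 0$, and testing against an arbitrary finite projection $P$ gives $\trace(PX) = \lim_k \trace(PX_k) \le \trace(C)$; hence $\trace(X) = \sup_P \trace(PX) \le \trace(C)$, so exactly one of (i), (ii) holds.

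For (i), when $\trace(X) = \trace(C)$, I would prove $X_k \to X$ in trace norm directly, as a noncommutative Scheff\'e-type statement. Fix $\epsilon > 0$ and a finite spectral projection $P$ of $X$ with $\trace(P^{\perp} X) < \epsilon$. Then $\trace(P X_k) \to \trace(PX) > \trace(C) - \epsilon$, so $\trace(P^{\perp} X_k) = \trace(C) - \trace(P X_k) < \epsilon$ for all large $k$. Decompose $X_k - X$ into its four blocks with respect to $P, P^{\perp}$ and bound each by the triangle inequality. The compression $P(X_k - X)P$ lives on the finite-dimensional space $P\Hil$, where weak$^{*}$ convergence forces trace-norm convergence, so $\norm{P(X_k - X)P}_1 \to 0$. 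The opposite corner satisfies $\norm{P^{\perp}(X_k - X)P^{\perp}}_1 \le \trace(P^{\perp} X_k) + \trace(P^{\perp} X) < 2\epsilon$ eventually. The two off-diagonal blocks are adjoints of one another, and the Cauchy--Schwarz inequality for the trace together with positivity gives $\norm{PWP^{\perp}}_1 \le \trace(PWP)^{1/2}\trace(P^{\perp}WP^{\perp})^{1/2} \le \norm{C}_1^{1/2}\epsilon^{1/2}$ for $W \in \{X_k, X\}$ (eventually). Adding the four estimates, letting $k \to \infty$ and then $\epsilon \to 0$ yields $\norm{X_k - X}_1 \to 0$. (Alternatively, one could quote the known fact that positive trace-class operators converging in the weak operator topology with convergent traces converge in trace norm.)

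For (ii), when $c := \trace(C - X) > 0$, set $Y_k := X_k - X$. Then $Y_k \in \traceclass^{sa}$, $Y_k \xrightarrow{\wstar} 0$, $\trace(Y_k) = c > 0$ for all $k$, and $Y_k \ge -X$ with $X \in \traceclass^{+}$, while $\trace(Y_k A) = \trace(X_k A) - \trace(XA) \to x - \trace(XA) =: \mu$. Thus \Cref{lem:key-lemma} applies and produces, for every finite projection $P$ and every $\epsilon > 0$, a unit vector $y \in P^{\perp}\Hil$ with $\abs{c \innerprod{Ay}{y} - \mu} < \epsilon$. I would then build an orthonormal sequence inductively: given orthonormal $y_1, \dots, y_{n-1}$, choose a finite projection $P_n$ whose range contains $y_1, \dots, y_{n-1}$ and apply \Cref{lem:key-lemma} with this $P_n$ and $\epsilon = \tfrac1n$ to obtain a unit vector $y_n \in P_n^{\perp}\Hil$ --- automatically orthogonal to $y_1, \dots, y_{n-1}$ --- with $\abs{c\innerprod{Ay_n}{y_n} - \mu} < \tfrac1n$. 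Then $\seq{y_n}$ is orthonormal and $\innerprod{Ay_n}{y_n} \to \mu/c$, so $\mu/c \in \essnr(A)$ by the definition of the essential numerical range. Hence $x - \trace(XA) = \mu \in c\,\essnr(A) = \trace(C-X)\,\essnr(A)$, which is (ii).

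The main obstacle is Case (i): although each individual estimate is routine, care is needed because weak$^{*}$ convergence is duality with $\K$ (not $B(\Hil)$), so one must check it genuinely controls the compression to a finite corner, and the off-diagonal estimates must rely only on positivity and the uniform bound $\norm{X_k}_1 = \trace(C)$, not on any feature of the individual $X_k$; once the four-block decomposition is in place these go through. Case (ii) is essentially bookkeeping on top of \Cref{lem:key-lemma}, the only delicate point being the inductive choice of the $P_n$ so that the vectors the lemma returns form a genuine orthonormal sequence rather than merely an asymptotically orthogonal one.
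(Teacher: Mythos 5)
Your proof is correct, and its core coincides with the paper's: the dichotomy is settled by showing $\trace(X)\le\trace(C)$, case (ii) is exactly the paper's argument (verify the hypotheses of \Cref{lem:key-lemma} for $Y_k:=X_k-X$, then inductively feed the span of the previously chosen vectors back into the lemma to extract a genuine orthonormal sequence with $\innerprod{Ay_n}{y_n}\to\mu/c$), and case (i) rests on the Radon--Riesz-type fact that \weakstar{} convergence plus convergence of trace norms gives trace-norm convergence. Where you differ is in replacing the paper's two external ingredients by self-contained arguments: the paper gets $X\in\traceclass^+$ and $\trace(X)\le\trace(C)$ from the \weakstar{}-closedness of $\setb{Z\in\traceclass^+}{\lambda(Z)\submaj\lambda(C)}$ (\Cref{cor:submajorization-weak-star-compact}), whereas you test against rank-one and finite projections directly; and for case (i) the paper simply cites Arazy and Simon \cite{Ara-1981-PAMS,Sim-1981-PAMS}, whereas you prove the positive-operator case by the $2\times 2$ block decomposition with respect to a finite spectral projection $P$ of $X$, using $\norm{PWP^{\perp}}_1\le\trace(PWP)^{1/2}\trace(P^{\perp}WP^{\perp})^{1/2}$ for $W\ge 0$ --- a correct noncommutative Scheff\'e argument (and you note the citation as an alternative). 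The trade-off is the expected one: your route is longer but elementary and makes the mechanism of trace-norm convergence transparent, while the paper's is shorter and, via the submajorization set, records the slightly stronger conclusion $\lambda(X)\submaj\lambda(C)$, which is the form reused in the proof of \Cref{thm:main-theorem}; within this proposition, however, only $X\ge 0$ and $\trace(X)\le\trace(C)$ are needed, so your version loses nothing here.
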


\begin{proof}
  Since the set $\setb{ Z \in \traceclass^+ }{ \lambda(Z) \submaj \lambda(C) }$, is \weakstar{} closed by \Cref{cor:submajorization-weak-star-compact} and it contains $\orbit(C)$, we see that $\lambda(X) \submaj \lambda(C)$, and hence $\trace(X) \le \trace(C)$.

  If $\trace(X) = \trace(C)$, then since $X,X_k,C$ are all positive trace-class operators,
  \begin{equation*}
    \norm{X_k}_1 = \trace(X_k) = \trace(C) = \trace(X) = \norm{X}_1.
  \end{equation*}
  Therefore $X_k \xrightarrow{\wstar} X$ and $\norm{X_k}_1 \to \norm{X}_1$, and consequently $X_k \xrightarrow{\norm{\bigcdot}_1} X$, which is due to Arazy and Simon \cite{Ara-1981-PAMS,Sim-1981-PAMS}.

  If $\trace(X) < \trace(C)$, then set $Y_k := X_k - X$ and notice
  \begin{enumerate}
  \item $\trace(Y_k) = \trace(X_k) - \trace(X) = \trace(C - X)$ is a positive constant;
  \item $Y_k \xrightarrow{\wstar} 0$;
  \item $Y_k = X_k - X \ge -X$.
  \end{enumerate}
  Moreover, $\trace(Y_k A) = \trace(X_k A) - \trace(XA) \to x - \trace(XA)$.
  Thus $\seq{Y_k}_{k=1}^{\infty}$ satisfies the hypotheses of \Cref{lem:key-lemma} with $\mu = x - \trace(XA)$.

  Then we inductively construct an orthonormal sequence $\seq{y_n}_{n=1}^{\infty}$ for which $\abs{\trace(C-X) \innerprod{Ay_n}{y_n} - (x - \trace(XA))} < \frac{1}{n}$.
  We do this as follows: apply \Cref{lem:key-lemma} to obtain $y_1 \in \Hil$ for which $\abs{\trace(C-X) \innerprod{Ay_1}{y_1} - (x - \trace(XA))} < 1$.
  Then suppose for $m \in \nats$ we have constructed $y_1,\ldots,y_m$.
  Then again apply \Cref{lem:key-lemma} with the projection onto $\spans \set{y_1,\ldots,y_m}$ to obtain $y_{m+1} \in \set{y_1,\ldots,y_m}^{\perp}$ for which $\abs{\trace(C-X) \innerprod{Ay_{m+1}}{y_{m+1}} - (x - \trace(XA))} < \frac{1}{m+1}$.

  Having constructed the desired orthonormal sequence $\seq{y_n}_{n=1}^{\infty}$, we note that since $\trace(C-X) > 0$, $\innerprod{Ay_n}{y_n} \to \frac{x - \trace(XA)}{\trace(C-X)}$, and therefore the limit lies in $\essnr(A)$.
  Hence $x - \trace(XA) \in \trace(C-X) \essnr(A)$.
\end{proof}

We are now in position to establish our main theorem.

\begin{theorem}
  \label{thm:main-theorem}
  Let $C \in \traceclass^+$ be a positive trace-class operator and let $A \in B(\Hil)$.
  Then
  \begin{align*}
    \closure{\ocnr(A)} &= \setb1{ \trace(XA) + \trace(C-X) \essnr(A) }{ X \in \traceclass^+, \lambda(X) \submaj \lambda(C) } \\
                       &= \conv \bigcup_{0 \le m \le \rank(C)} \big( \ocnr[C_m](A) + \trace(C-C_m) \essnr(A) \big),
  \end{align*}
  where $C_m = \diag(\lambda_1(C),\ldots,\lambda_m(C),0,0,\ldots)$.
\end{theorem}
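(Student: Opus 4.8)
The plan is to establish the cyclic chain of inclusions $\closure{\ocnr(A)} \subseteq S_2 \subseteq S_3 \subseteq \closure{\ocnr(A)}$, where $S_2$ and $S_3$ denote the second and third sets in the statement. The first two inclusions are bookkeeping on top of \Cref{prop:weak-star-convergence-essential-numerical-range} and \Cref{lem:submajorization-convex-hull-of-orbits}, while the last — which I expect to be the main obstacle — requires an explicit approximation argument. Two preliminary remarks: first, since $C \ge 0$ forces $\lambda^-(C) = 0$, any selfadjoint $X$ with $\lambda(X) \submaj \lambda(C)$ has $\lambda^-(X) = 0$ and hence $X \ge 0$, so $\setb{X \in \traceclass^{sa}}{\lambda(X) \submaj \lambda(C)} = \setb{X \in \traceclass^+}{\lambda(X) \submaj \lambda(C)}$, which by \Cref{cor:submajorization-weak-star-compact} is \weakstar{} compact, metrizable, and contains $\orbit(C)$; second, $\closure{\ocnr(A)}$ is closed and is convex by \Cref{thm:c-numerical-range-via-majorization}(i) (applicable since $C \in \traceclass^+ \subseteq \traceclass^{sa}$).

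For $\closure{\ocnr(A)} \subseteq S_2$, given $z$ in the closure I would pick $Z_k \in \orbit(C)$ with $\trace(Z_k A) \to z$ and, using \weakstar{} sequential compactness of the set above, pass to a subsequence along which $Z_k \xrightarrow{\wstar} X$, $\lambda(X) \submaj \lambda(C)$, and still $\trace(Z_k A) \to z$. Then \Cref{prop:weak-star-convergence-essential-numerical-range} applies: in case (i) the convergence is in trace norm, so $X \in \orbit(C)$, $z = \trace(XA)$, and $\trace(C-X) = 0$; in case (ii), $z - \trace(XA) \in \trace(C-X)\essnr(A)$. Either way $z \in \trace(XA) + \trace(C-X)\essnr(A) \subseteq S_2$.

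For $S_2 \subseteq S_3$, fix $X \in \traceclass^+$ with $\lambda(X) \submaj \lambda(C)$ and $\xi \in \essnr(A)$. By \Cref{lem:submajorization-convex-hull-of-orbits} (in which $m_- = 0$ and $C_{0,m} = C_m$, because $C \ge 0$) there is $Y \in \traceclass^+$ with $\lambda(X) \maj \lambda(Y)$ and $\orbit(Y) \subseteq \conv\bigcup_{0 \le m \le \rank(C)}\orbit(C_m)$. From $\lambda(X) \maj \lambda(Y)$ and \Cref{thm:c-numerical-range-via-majorization}(ii) we get $\trace(XA) \in \ocnr[X](A) \subseteq \ocnr[Y](A)$, so $\trace(XA) = \trace(X'A)$ for some $X' \in \orbit(Y)$, with $\trace(X') = \trace(Y) = \trace(X)$ (equal traces since $\lambda(X) \maj \lambda(Y)$). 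Writing $X'$ as a finite convex combination $\sum_j t_j W_j$ with each $W_j \in \orbit(C_{m_j})$ and using $\trace(C - W_j) = \trace(C - C_{m_j})$, I obtain $\trace(XA) + \trace(C-X)\xi = \sum_j t_j \big( \trace(W_j A) + \trace(C - C_{m_j})\xi \big)$, which lies in $S_3$.

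For $S_3 \subseteq \closure{\ocnr(A)}$, since the right side is closed and convex it suffices to show $\ocnr[C_m](A) + \trace(C - C_m)\essnr(A) \subseteq \closure{\ocnr(A)}$ for each $m$, which is trivial when $\trace(C - C_m) = 0$. Otherwise fix $z = \trace(ZA) \in \ocnr[C_m](A)$, say $Z = \sum_{i=1}^m \lambda_i(C) f_i f_i^{*} \in \orbit(C_m)$ with $f_1,\ldots,f_m$ orthonormal, so $R_Z = \sum_{i=1}^m f_i f_i^{*}$; and fix $\xi \in \essnr(A)$. Because $\essnr(A)$ equals the essential numerical range of the compression of $A$ to the cofinite subspace $R_Z^{\perp}\Hil$, there is an orthonormal sequence $\seq{e_n}$ in $R_Z^{\perp}\Hil$ with $\innerprod{Ae_n}{e_n} \to \xi$. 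Given $\epsilon > 0$, choose $L$ so that $\abs{\innerprod{Ae_n}{e_n} - \xi} < \epsilon/\trace(C - C_m)$ for all $n > L$, and let $X$ be the positive operator acting as $\lambda_i(C)$ on $f_i$ for $i \le m$ and as $\lambda_{m+\ell}(C)$ on $e_{L+\ell}$ for $\ell \ge 1$. Then $X$ is positive with eigenvalue sequence $\lambda(C)$, hence $X \in \orbit(C)$, while $\trace(XA) = \trace(ZA) + \sum_{\ell \ge 1}\lambda_{m+\ell}(C)\innerprod{Ae_{L+\ell}}{e_{L+\ell}}$ differs from $z + \trace(C - C_m)\xi$ by less than $\epsilon$, since $\sum_{\ell \ge 1}\lambda_{m+\ell}(C) = \trace(C - C_m)$. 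Letting $\epsilon \to 0$ finishes the inclusion. The crux of this last step — and the main obstacle overall — is that the eigenvalues of the tail $C - C_m$, whose largest entries carry the most weight, must be loaded onto a \emph{sufficiently late} block of the orthonormal sequence realizing $\xi$, so that the dominant weights are paired with inner products already within $\epsilon$ of $\xi$.
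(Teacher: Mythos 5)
Your proposal is correct and follows essentially the same route as the paper's proof: the inclusion $\closure{\ocnr(A)}$ into the submajorization set via \weakstar{} sequential compactness (\Cref{cor:submajorization-weak-star-compact}) and \Cref{prop:weak-star-convergence-essential-numerical-range}; the passage to the convex hull via \Cref{lem:submajorization-convex-hull-of-orbits} and \Cref{thm:c-numerical-range-via-majorization}(ii); and the final inclusion by placing the tail eigenvalues $\lambda_{m+\ell}(C)$ on a late block of an orthonormal sequence in $R_Z^{\perp}\Hil$ realizing the essential numerical range value, exactly as in the paper's construction of $X+X' \in \orbit(C)$. The only differences are cosmetic (you spell out the finite convex combination in the second inclusion and shift the orthonormal sequence by $L$ rather than passing to a tail).
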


\begin{proof}
  We will prove the set equalities by establishing three subset inclusions.

  We begin by proving
  \begin{equation}
    \label{eq:closure-ocnr-subseteq-submajorization}
    \closure{\ocnr(A)} \subseteq \setb1{ \trace(XA) + \trace(C-X) \essnr(A) }{ X \in \traceclass^+, \lambda(X) \submaj \lambda(C) }.
  \end{equation}
  Take any $x \in \closure{\ocnr(A)}$.
  Then there is a sequence $X_k \in \orbit(C)$ for which $\trace(X_k A) \to x$.
  Since the set $\setb{ Z \in \traceclass^+ }{ \lambda(Z) \submaj \lambda(C) }$ contains $\orbit(C)$ and is \weakstar{} compact and metrizable by \Cref{cor:submajorization-weak-star-compact}, it is \weakstar{} sequentially compact.
  Therefore, by passing to a subsequence we may assume $X_k \xrightarrow{\wstar} X \in \traceclass^+$ with $\lambda(X) \submaj \lambda(C)$.
  By \Cref{prop:weak-star-convergence-essential-numerical-range}, either $X_k \to X$ in trace norm or $x - \trace(XA) \in \trace(C-X) \essnr(A)$.
  In case of the latter, there is nothing more to prove, since $x \in \trace(XA) + \trace(C-X) \essnr(A)$.
  In case $X_k \to X$ in trace norm, then
  \begin{equation*}
    \abs{\trace((X_k - X)A)} \le \norm{X_k - X}_1 \norm{A} \to 0,
  \end{equation*}
  hence $\trace(X_k A) \to \trace(XA)$, and therefore as $\trace(X) = \trace(C)$,
  \begin{equation*}
    x = \trace(XA) = \trace(XA) + 0 \cdot \essnr(A) = \trace(XA) + \trace(C-X) \essnr(A).
  \end{equation*}

  Next we prove
  \begin{equation}
    \label{eq:submajorization-subseteq-convex-hull-union}
    \begin{gathered}
      \setb1{ \trace(XA) + \trace(C-X) \essnr(A) }{ X \in \traceclass^+, \lambda(X) \submaj \lambda(C) } \\
      \subseteq \conv \bigcup_{0 \le m \le \rank(C)} \big( \ocnr[C_m](A) + \trace(C-C_m) \essnr(A) \big).
    \end{gathered}
  \end{equation}
  This follows easily from \Cref{lem:submajorization-convex-hull-of-orbits}.
  In particular, consider $X \in \traceclass^+, \lambda(X) \submaj \lambda(C)$.
  Then by \Cref{lem:submajorization-convex-hull-of-orbits} there is some $Y \in \traceclass^+$ for which $\lambda(X) \maj \lambda(Y) \submaj \lambda(C)$, and
  $\orbit(Y) \subseteq \conv \bigcup \setb{ \orbit(C_m) }{ 0 \le m \le \rank(C) }$.
  By \Cref{thm:c-numerical-range-via-majorization}(ii), there is some $Z \in \orbit(Y)$ for which $\trace(ZA) = \trace(XA)$, and moreover, $\trace(Z) = \trace(Y) = \trace(X)$.
  Thus
  \begin{align*}
    \trace(XA) + \trace(C-X) \essnr(A)
    &= \trace(ZA) + \trace(C-Z) \essnr(A) \\
    &\subseteq \conv \bigcup_{0 \le m \le \rank(C)} \big( \ocnr[C_m](A) + \trace(C-C_m) \essnr(A) \big),
  \end{align*}
  establishing \eqref{eq:submajorization-subseteq-convex-hull-union}.

  Next we will show
  \begin{equation}
    \label{eq:convex-hull-union-subseteq-closure-ocnr}
    \conv \bigcup_{0 \le m \le \rank(C)} \big( \ocnr[C_m](A) + \trace(C-C_m) \essnr(A) \big) \subseteq \closure{\ocnr(A)}.
  \end{equation}
  Since $\ocnr(A)$ is convex by \Cref{thm:c-numerical-range-via-majorization}(i), and because the closure of a convex set is convex, it suffices to prove for all $0 \le m \le \rank(C)$,
  \begin{equation*}
    \ocnr[C_m](A) + \trace(C-C_m) \essnr(A) \subseteq \closure{\ocnr(A)}.
  \end{equation*}
  Now, if $m = \rank(C)$, then $\orbit(C_m) = \orbit(C)$, so $\ocnr[C_m](A) = \ocnr(A)$ and $\trace(C-C_m) = 0$, so there is nothing to prove.

  So suppose $m < \rank(C)$, which implies that $C_m$ is finite rank.
  Then take any $X \in \orbit(C_m)$ and $\mu \in \essnr(A)$, and let $\epsilon > 0$.
  Letting $R_X$ denote the range projection of $X$, which is finite, we see that the compression $R_X^{\perp} A \vert_{R_X^{\perp} \Hil}$ of $A$ to $R_X^{\perp} \Hil$ satisfies $\mu \in \essnr(A) = \essnr(R_X^{\perp} A \vert_{R_X^{\perp} \Hil})$.
  Therefore, there exists an orthonormal sequence of vectors $\seq{y_n}_{n=1}^{\infty}$ in $R_X^{\perp} \Hil$ for which $\abs{\innerprod{Ay_n}{y_n} - \mu} < \frac{\epsilon}{\trace(C-C_m)}$.
  Let $X'$ be the diagonal operator (relative to an orthonormal basis containing $\seq{y_n}_{n=1}^{\infty}$) defined by $X'y_n = \lambda_{m+n}(C) y_n$, and which is zero on the orthogonal complement of $\spans \setb{y_n}{n \in \nats}$.
  Since $X \in \orbit(C_m)$, $X' \in \orbit(\diag(\lambda_{m+1}(C),\lambda_{m+2}(C),\ldots))$, and $XX' = X'X = 0$, then $X+X' \in \orbit(C)$, $\trace(X') = \trace(C-X) = \trace(C-C_m)$ and
  \begin{align*}
    \abs{\trace((X+X')A) - (\trace(XA) + \trace(C-C_m)\mu)}
    &= \abs{\trace(X'A) - \trace(C-C_m) \mu} \\
    &\le \trace(C-C_m) \frac{\epsilon}{\trace(C-C_m)} = \epsilon.
  \end{align*}
  Because $\epsilon$ was arbitrary and $\trace((X+X')A) \in \ocnr(A)$, this proves $\trace(XA) + \trace(C-C_m) \mu \in \closure{\ocnr(A)}$.
  Since $X \in \orbit(C_m)$ and $\mu \in \essnr(A)$ were arbitrary, we have verified \eqref{eq:submajorization-subseteq-convex-hull-union}.

  Finally, combining the subset relations \labelcref{eq:closure-ocnr-subseteq-submajorization,,eq:submajorization-subseteq-convex-hull-union,,eq:convex-hull-union-subseteq-closure-ocnr} proves the theorem.
\end{proof}

\begin{example}
  \label{ex:selfadjoint-fail}
  We note that the most na\"ive extension of \Cref{thm:main-theorem} to selfadjoint $C$ is \emph{false}.
  That is, one might wonder if:
  \begin{equation*}
    \closure{\ocnr(A)} \overset{?}{=} \conv \bigcup_{0 \le m_{\pm} \le \rank(C_{\pm})} \big( \ocnr[C_{m_-,m_+}](A) + \trace(C-C_{m_-,m_+}) \essnr(A) \big),
  \end{equation*}
  where $C_{m_-,m_+}$ are defined as in \Cref{lem:submajorization-convex-hull-of-orbits}.

  However, consider $C = \diag(1,1,-1,-1,0,\ldots)$ and $B = \diag(1,1-\frac{1}{2}, 1-\frac{1}{3},\ldots)$ and $A = B \oplus -B$.
  Then
  \begin{align*}
    \ocnr(A) + \trace(C-C_{2,2}) \essnr(A) &= (-4,4) + 0 \cdot [-1,1], \\
    \ocnr[C_{2,1}](A) + \trace(C-C_{2,1}) \essnr(A) &= (-3,3) + 1 \cdot [-1,1], \\
    \ocnr[C_{1,1}](A) + \trace(C-C_{1,1}) \essnr(A) &= [-2,2] + 0 \cdot [-1,1],  \\
    \ocnr[C_{1,0}](A) + \trace(C-C_{1,0}) \essnr(A) &= [-1,1] + 1 \cdot [-1,1],  \\
    \ocnr[C_{0,0}](A) + \trace(C-C_{0,0}) \essnr(A) &= \set{0} + 0 \cdot [-1,1],
  \end{align*}
  and by symmetry considerations we can ignore the others.
  Consequently, the right-hand side of the previous display is the union of these sets, which is $(-4,4)$ and hence not closed.
\end{example}

We conclude this section with a note concerning the $C$-numerical range introduced by Dirr and vom Ende in \cite{DvE-2020-LaMA} (distinct from, but related to, the orbit-closed $C$-numerical range).
The $C$-numerical range, for $C \in \traceclass$, is defined as
\begin{equation*}
  \cnr(A) := \setb{ \trace(XA) }{ X \in \ugroup(C) },
\end{equation*}
and so the difference between $\cnr(A)$ and $\ocnr(A)$ is that the latter allows $X \in \orbit(C) := \closure[\norm{\cdot}_1]{\ugroup(C)}$.
We have neglected mentioning this $C$-numerical range of Dirr and von Ende primarily because, for reasons discussed in \cite{LP-2021-LaMA}, we feel that $\ocnr(A)$ is actually the more natural extension to $C$ infinite rank of the (previously existing, even as early as 1975 in \cite{Wes-1975-LMA}) definition for $C$ finite rank.
The next example reinforces this sentiment by establishing that the second equality in \Cref{thm:main-theorem} \emph{does not hold} if one replaces $\ocnr(A)$ with $\cnr(A)$ everywhere, despite the fact that (by \cite[Theorem~3.1]{LP-2021-LaMA}) $\closure{\cnr(A)} = \closure{\ocnr(A)}$.

\begin{example}
  \label{ex:cnr-fail}
  Let $C \in \traceclass^+$ be a \emph{strictly} positive (i.e., $\ker(C) = \set{0}$) trace-class operator, and let $A \in \K^{sa}$ be a compact selfadjoint operator such that $\rank(A_{\pm}) = \infty$.
  Then the second equality of \Cref{thm:main-theorem} fails for $\cnr(A)$;
  that is,
  \begin{equation}
    \label{eq:cnr-fail}
    \closure{\cnr(A)} \supsetneq \cnr(A) = \conv \bigcup_{0 \le m \le \rank(C)} \big( \cnr[C_m](A) + \trace(C-C_m) \essnr(A) \big).
  \end{equation}

  To understand why, notice that since $A \in \K$, $\essnr(A) = \set{0}$, and so the right-hand side reduces to the convex hull of the union of $\cnr[C_m](A)$ for $0 \le m \le \rank(C) = \infty$.
  We will prove that $\cnr[C_m](A) \subseteq \cnr(A)$ for all $m$, and that $\cnr(A)$ is an open line segment.

  For $m < \infty$, \Cref{prop:c-numerical-range-maximum} and \Cref{thm:c-numerical-range-selfadjoint-formula} guarantee (using the fact that $\ugroup(C_m) = \orbit(C_m)$ since $\rank(C_m) < \infty$; see \cite[Proposition~3.1]{LP-2021-LaMA}) that
  \begin{equation}
    \label{eq:cnr-Cm-A}
    \cnr[C_m](A) = \ocnr[C_m](A) = \left[ -\sum_{n=1}^m \lambda_n(C) \lambda_n^-(A), \sum_{n=1}^m \lambda_n(C) \lambda_n^+(A) \right].
  \end{equation}

  Since $\ugroup(C_{\infty}) = \ugroup(C)$, $\cnr[C_{\infty}](A) = \cnr(A)$ is convex by \cite[Corollary~7.1]{LP-2021-LaMA}.
  Moreover, $\cnr(A) \subseteq \reals$ by \cite[Proposition~3.2]{LP-2021-LaMA}, hence it is an interval by convexity.
  Then \Cref{prop:c-numerical-range-maximum} and \Cref{thm:c-numerical-range-selfadjoint-formula} show
  \begin{equation}
    \label{eq:cnr-C-A}
    \cnr[C_{\infty}](A) = \cnr(A) = \left( -\sum_{n=1}^{\infty} \lambda_n(C) \lambda_n^-(A), \sum_{n=1}^{\infty} \lambda_n(C) \lambda_n^+(A) \right).
  \end{equation}
  In the above, that this interval is open arises from the fact that (due to \Cref{thm:c-numerical-range-selfadjoint-formula}), if $\sup \ocnr(A)$ ($= \sup \cnr(A)$) is attained by some $X \in \orbit(C)$, then $PX = XP = X$ where $P = \chi_{[0,\infty)}(A)$.
  Consequently, since $P \not= I$, $X \notin \ugroup(C)$ and therefore $\sup \cnr(A)$ is not attained.
  A symmetric argument holds for $\inf \cnr(A)$ ($= - \sup \cnr(-A)$).

  Since $\rank(A_{\pm}) = \infty$ and $\rank(C) = \infty$, \eqref{eq:cnr-Cm-A} and \eqref{eq:cnr-C-A} show that $\cnr[C_m](A) \subseteq \cnr(A)$ for every $m$, and that $\cnr(A) \subseteq \reals$ is an open interval, thereby proving \eqref{eq:cnr-fail}.
\end{example}

\section{Inherited closedness}

Our main result in this section is \Cref{thm:closedness-restricts}, which guarantees that if $C \in \traceclass^+$ and $\ocnr(A)$ is closed, then so is $\ocnr[C_m](A)$ for every $0 \le m < \rank(C)$, where $C_m := \diag(\lambda_1(C),\ldots,\lambda_m(C),0,\ldots)$.

The analysis in this section is markedly different from that in the previous section.
Whereas in \Cref{sec:submajorization-closure} we made extensive use of the \weakstar{} topology, in this section such arguments are mostly confined to \Cref{prop:rank-condition-closure}.
Instead, we will make frequent use of a standard technique in the theory of numerical ranges, which essentially allows us to reduce to the case when $A$ is selfadjoint (at least when $C$ is also selfadjoint).

This reduction is due to the following two facts, for $C \in \traceclass^{sa}$ and $a,b \in \complex$, proofs of which are simple, but can be found in \cite[Proposition~3.2]{LP-2021-LaMA}:
\begin{equation*}
  \ocnr(aI + bA) = a \trace(C) + b \ocnr(A) \quad\text{and}\quad \Re(\ocnr(A)) = \ocnr(\Re(A)).
\end{equation*}
Using the first fact, one can reduce the study of points on the boundary of $\ocnr(A)$ to those having maximal real part, because one can simply multiply $A$ by a modulus $1$ constant to rotate the orbit-closed $C$-numerical range.
The second fact allows for the study of points with maximal real part by studying those points which maximize $\ocnr(\Re(A))$.
For points on the boundary of $\ocnr(A)$ which do not lie on a line segment, this reduction often tells the whole story.
But line segments on the boundary are only partially described by this reduction to the selfadjoint case, and often this results in significantly more technical approaches devoted to their study.

We begin this section with a result which, roughly approximated, says: when a significant enough portion of $A$ lies outside the essential spectrum, then the analysis even of entire line segments on the boundary reduces to the selfadjoint case.

\begin{proposition}
  \label{prop:rank-condition-closure}
  Let $C \in \traceclass^+$ be a positive trace-class operator and $A \in B(\Hil)$, and let $m := \max \essspec(\Re(A))$ and let $M := \sup \Re(\ocnr(A))$.
  If $\rank(\Re(A) - mI)_+ \ge \rank(C)$ (i.e., $\trace(\chi_{(m,\infty)}(\Re(A))) \ge \rank(C)$), then
  \begin{equation*}
    \closure{\ocnr(A)} \cap (M + i\reals) = \ocnr(A) \cap (M + i\reals).
  \end{equation*}
\end{proposition}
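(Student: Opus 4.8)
Since $\ocnr(A) \subseteq \closure{\ocnr(A)}$, the inclusion $\supseteq$ is immediate, so the plan is to prove $\closure{\ocnr(A)} \cap (M + i\reals) \subseteq \ocnr(A)$. Fix $z$ in the left-hand set. First I would pick a sequence $\seq{X_k}$ in $\orbit(C)$ with $\trace(X_k A) \to z$; since $\orbit(C)$ lies in the \weakstar{} sequentially compact set $\setb{Z \in \traceclass^+}{\lambda(Z) \submaj \lambda(C)}$ (\Cref{cor:submajorization-weak-star-compact}), pass to a subsequence with $X_k \xrightarrow{\wstar} X \in \traceclass^+$ and $\lambda(X) \submaj \lambda(C)$. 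Now \Cref{prop:weak-star-convergence-essential-numerical-range} supplies a dichotomy: either (i) $\trace(X) = \trace(C)$ and $X_k \to X$ in trace norm, or (ii) $\trace(X) < \trace(C)$ and $z - \trace(XA) \in \trace(C - X)\essnr(A)$. In case (i) we are done, because $\orbit(C)$ is trace-norm closed, so $X \in \orbit(C)$, and $\trace(X_k A) \to \trace(XA)$ gives $z = \trace(XA) \in \ocnr(A)$. Hence the entire content of the proposition is to rule out case (ii), and this is precisely where the rank hypothesis must enter.

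So suppose, toward a contradiction, that we are in case (ii), and write $z = \trace(XA) + t\nu$ with $t := \trace(C - X) > 0$ and $\nu \in \essnr(A)$. Put $B := \Re(A)$. Taking real parts (and using that $X$ is selfadjoint) gives $M = \Re(z) = \trace(XB) + t\,\Re(\nu)$, while $M = \sup\Re(\ocnr(A)) = \sup\ocnr(B)$. Since $m := \max\essspec(B)$ lies in $\essnr(B)$ and $\lambda(X) \submaj \lambda(C)$, applying \Cref{thm:main-theorem} to $B$ yields $\trace(XB) + tm \in \trace(XB) + t\,\essnr(B) \subseteq \closure{\ocnr(B)}$, hence $\trace(XB) + tm \le M$; as also $\Re(\nu) \le \max\essnr(B) = m$, the identity $M = \trace(XB) + t\,\Re(\nu)$ forces both $\Re(\nu) = m$ and $\trace(XB) = M - tm$. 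Therefore, using $\trace(X) + t = \trace(C)$,
\begin{equation*}
  \trace\bigl(X(B - mI)\bigr) = \trace(XB) - m\,\trace(X) = M - m\,\trace(C).
\end{equation*}

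To reach the contradiction, set $D := (B - mI)_+$, a positive compact operator with $\rank(D) \ge \rank(C)$ by hypothesis. By \Cref{thm:c-numerical-range-selfadjoint-formula} and \Cref{prop:c-numerical-range-maximum},
\begin{equation*}
  M - m\,\trace(C) = \sup\ocnr(B - mI)_+ = \sup\ocnr(D) = \sum_n \lambda_n(C)\lambda_n(D).
\end{equation*}
On the other hand, $B - mI \le D$ and $X \ge 0$ give $\trace(X(B - mI)) \le \trace(XD)$; applying \Cref{prop:c-numerical-range-maximum} to the orbit of $X$ (note $X \in \orbit(X)$) gives $\trace(XD) \le \sup\ocnr[X](D) = \sum_n \lambda_n(X)\lambda_n(D)$; and $\lambda(X) \submaj \lambda(C)$ together with $(\lambda_n(D))_n$ nonincreasing gives $\sum_n \lambda_n(X)\lambda_n(D) \le \sum_n \lambda_n(C)\lambda_n(D)$ by Abel summation. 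Comparing with the previous display forces all of these inequalities to be equalities; in particular $\sum_n \lambda_n(X)\lambda_n(D) = \sum_n \lambda_n(C)\lambda_n(D)$.

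The remaining step, which I expect to be the main obstacle, is to show this equality is impossible once $\trace(X) < \trace(C)$ and $\rank(D) \ge \rank(C)$. I would record this as an elementary lemma on summable sequences: if $a \submaj b$ are nonnegative sequences in $\ell_1$ with $\sum_n a_n < \sum_n b_n$, and $w$ is nonincreasing, nonnegative, convergent to $0$, with $w_n > 0$ whenever $b_n > 0$, then $\sum_n a_n w_n < \sum_n b_n w_n$. For the proof one writes $A_n, B_n$ for the partial sums and uses Abel summation:
\begin{equation*}
  \sum_n b_n w_n - \sum_n a_n w_n = \sum_n (B_n - A_n)(w_n - w_{n+1}) \ge \delta \sum_{n \ge N}(w_n - w_{n+1}) = \delta\, w_N > 0,
\end{equation*}
where $\delta > 0$ and $N$ are chosen with $B_n - A_n \ge \delta$ for all $n \ge N$ and $w_N > 0$ — possible because $B_n - A_n \to \sum_n b_n - \sum_n a_n > 0$, while if $b$ has finite support of size $r$ one may take $N = r$ (then $B_n = \sum_n b_n$ for $n \ge r$ and $w_r > 0$), and otherwise $w_n > 0$ for all $n$. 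Applying this with $a = \lambda(X)$, $b = \lambda(C)$, $w = (\lambda_n(D))_n$ — legitimate since $\rank(D) \ge \rank(C)$ forces $\lambda_n(D) > 0$ whenever $\lambda_n(C) > 0$ — contradicts the displayed equality, so case (ii) cannot occur and the proof is complete. Apart from this lemma, the only facts used beyond the cited results are the standard identity $\Re(\essnr(A)) = \essnr(\Re(A))$ (with supremum $m$) and the reduction $\Re(\ocnr(A)) = \ocnr(\Re(A))$ recalled at the start of this section.
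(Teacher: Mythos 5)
Your proof is correct, but it takes a noticeably different route from the paper's. Both arguments share the same skeleton --- extract a \weakstar{} limit $X$ with $\lambda(X) \submaj \lambda(C)$, show the rank hypothesis forces $\trace(X) = \trace(C)$, and conclude via Arazy--Simon trace-norm convergence --- but the mechanisms differ. The paper never invokes \Cref{prop:weak-star-convergence-essential-numerical-range} or \Cref{thm:main-theorem}: it normalizes $m = 0$, uses a squeeze argument to show $\trace(X_n \Re(A)_+) \to \Re(x) = \sum_n \lambda_n(C)\lambda_n(\Re(A)_+)$, passes to the \weakstar{} limit against the compact operator $\Re(A)_+$, and then cites Lemma~5.2(i),(iii) of \cite{LP-2021-LaMA} (with $\delta_n = \lambda_n(C) - \lambda_n(X)$ and the jump structure of $\lambda(\Re(A)_+)$) to force $\trace(X) = \trace(C)$ outright; it finishes by noting $\lambda(X) \maj \lambda(C)$ and applying \Cref{thm:c-numerical-range-via-majorization}. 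You instead run the dichotomy of \Cref{prop:weak-star-convergence-essential-numerical-range} and rule out the deficit case by contradiction: \Cref{thm:main-theorem} applied to $\Re(A)$ pins down $\trace(X\Re(A)) = M - tm$, the formulas of \Cref{prop:c-numerical-range-maximum,thm:c-numerical-range-selfadjoint-formula} give $M - m\trace(C) = \sum_n \lambda_n(C)\lambda_n(D)$ with $D = (\Re(A)-mI)_+$, and your self-contained Abel-summation lemma (strict inequality for weighted sums under submajorization with a strict trace deficit, the rank hypothesis supplying $\lambda_n(D) > 0$ on the support of $\lambda(C)$) yields the contradiction; your lemma and its application are sound, as is the conclusion in case (i) via trace-norm closedness of $\orbit(C)$. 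What each buys: the paper's proof is self-reliant within Section~4 prerequisites (it only needs \Cref{cor:submajorization-weak-star-compact} plus the cited LP-2021 lemma), which matters for the simplification claim in \Cref{rem:rank-condition-simpler-proof}; your proof trades that LP-2021 citation for an elementary, fully written-out strict-inequality lemma, at the cost of leaning on the heavier machinery of \Cref{thm:main-theorem} and \Cref{prop:weak-star-convergence-essential-numerical-range} (which is legitimate here, since those results are established earlier and independently, so no circularity arises).
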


\begin{proof}
  Clearly, it suffices to assume $m = 0$ by translating the operator $A \mapsto (A - mI)$.

  Take $X_n \in \orbit(C)$ with $\trace(X_n A) \to x \in \closure{\ocnr(A)}$ and $\Re(x) = \sup \Re(\ocnr(A)) = \sup \ocnr(\Re(A))$.
  By \Cref{prop:c-numerical-range-maximum} and \Cref{thm:c-numerical-range-selfadjoint-formula}, we conclude
  \begin{equation}
    \label{eq:real-x-sum-product}
    \Re(x) = \sum_{n=1}^{\infty} \lambda_n(C) \lambda_n(\Re(A)_+).
  \end{equation}

  We claim that $\trace(X_n(\Re(A)_+)) \to \Re(x)$.
  For this, simply notice that $\trace(X_n(\Re(A))) = \Re(\trace(X_n A)) \to \Re(x)$, and also
  \begin{align*}
    \trace(X_n (\Re(A))) &= \trace(X_n (\Re(A)_+)) - \trace(X_n (\Re(A)_-)) \\
                         &\le \trace(X_n (\Re(A)_+)) \\
                         &\le \sum_{n=1}^{\infty} \lambda_n(C) \lambda_n(\Re(A)_+) = \Re(x),
  \end{align*}
  where the last inequality is due to \Cref{prop:c-numerical-range-maximum}.
  Then apply the squeeze theorem.
  
  By the \weakstar{} sequential compactness of $\setb{ Z \in \traceclass^+ }{ \lambda(Z) \submaj \lambda(C) }$ from \Cref{cor:submajorization-weak-star-compact}, there is some $X \in \traceclass^+$ with $\lambda(X) \submaj \lambda(C)$ for which $X_n \to X$ in the \weakstar{} topology.
  Since $\Re(A)_+$ is a compact operator, we see that $\trace(X_n (\Re(A)_+) \to \trace(X(\Re(A)_+)$ and hence
  \begin{equation}
    \label{eq:inequality-for-lemma-5.2-iii}
    \sum_{n=1}^{\infty} \lambda_n(C) \lambda_n(\Re(A)_+) = \Re(x) = \trace(X(\Re(A)_+)) \le \sum_{n=1}^{\infty} \lambda_n(X) \lambda_n(\Re(A)_+).
  \end{equation}

  Invoking \cite[Lemma~5.2(i)]{LP-2021-LaMA} with $\delta_n := \lambda_n(C) - \lambda_n(X)$, which has nonnegative partial sums since $\lambda(X) \submaj \lambda(C)$, we obtain $\sum_{n=1}^N \delta_n \lambda_n(\Re(A)_+) \ge 0$ for all $N$, and taking the limit as $N \to \infty$ we find $\sum_{n=1}^{\infty} \delta_n \lambda_n(\Re(A)_+) \ge 0$.
  Rearranging \eqref{eq:inequality-for-lemma-5.2-iii}, and noting that the individual sums are in $\ell_1$ since $X,C \in \traceclass^+$, we find $\sum_{n=1}^{\infty} \delta_n \lambda_n(\Re(A)_+) \ge 0$ and hence $\sum_{n=1}^{\infty} \delta_n \lambda_n(\Re(A)_+) = 0$.
  Applying \cite[Lemma~5.2(iii)]{LP-2021-LaMA}, we obtain
  \begin{equation}
    \label{eq:sum-delta-equals-zero}
    \sum_{n=1}^N \delta_n = 0, \text{ whenever } \lambda_N(\Re(A)_+) > \lambda_{N+1}(\Re(A)_+).
  \end{equation}
  If $\rank(\Re(A)_+) = \infty$, then \eqref{eq:inequality-for-lemma-5.2-iii} holds for infinitely many $N$ and hence $\sum_{n=1}^{\infty} \delta_n = 0$, in which case $\trace(X) = \trace(C)$. 
  Otherwise, by hypothesis, $M := \rank(\Re(A)_+) \ge \rank(C)$ with $M < \infty$.
  Then, $\lambda_M(\Re(A)_+) > 0 = \lambda_{M+1}(\Re(A)_+)$, and hence
  \begin{equation*}
    \trace(C) - \trace(X) \le \trace(C) - \sum_{n=1}^M \lambda_n(X) = \sum_{n=1}^M \delta_n = 0,
  \end{equation*}
  Since we already have the inequality $\trace(C) \ge \trace(X)$, we conclude $\trace(X) = \trace(C)$.

  Finally,
  \begin{equation*}
    \norm{X_n}_1 = \trace(X_n) = \trace(C) = \trace(X) = \norm{X}_1,
  \end{equation*}
  and hence $X_n \to X$ in trace norm (by the aforementioned result of Arazy and Simon \cite{Ara-1981-PAMS,Sim-1981-PAMS}).
  This guarantees $\trace(X_n A) \to \trace(XA)$, and hence $\trace(XA) = x$.
  Moreover, $\lambda(X) \maj \lambda(C)$, which guarantees that $\trace(XA) \in \ocnr(A)$ by \Cref{thm:c-numerical-range-via-majorization}.
\end{proof}

\begin{remark}
  \label{rem:rank-condition-simpler-proof}
  We note that \Cref{prop:rank-condition-closure} is a significant improvement over \cite[Proposition~5.2]{LP-2021-LaMA} for multitudinous reasons.
  The hypotheses of \Cref{prop:rank-condition-closure} are much weaker, the conclusion is stronger (in the notation of \cite{LP-2021-LaMA}, $\ocnr(A)$ contains the entire line segment $[x_-,x_+]$ instead of simply points arbitrarily close to $x_-$), and the proof is simpler and more elegant.
  
  In addition, \Cref{prop:rank-condition-closure} has, as a direct corollary, the statement that if for every $0 \le \theta < 2\pi$, $\rank(\Re(e^{i\theta}A)- m_{\theta}I)_+ \ge \rank(C)$, then $\ocnr(A)$ is closed, where $m_{\theta} := \max \essspec(e^{i\theta}A)$, which is exactly the content of \cite[Theorem~5.3]{LP-2021-LaMA}.
  However, the proof of \cite[Theorem~5.3]{LP-2021-LaMA} given in that paper was incredibly technical, and so the proof of \Cref{prop:rank-condition-closure} above represents a quite substantial simplification.
  Moreover, \Cref{prop:rank-condition-closure} is a stronger statement than \cite[Theorem~5.3]{LP-2021-LaMA} because it even applies to specific portions of the boundary of $\ocnr(A)$.
\end{remark}

The following result is the main technical lemma needed on the way to proving \Cref{thm:closedness-restricts}.
Note that statements \ref{item:tr-leq-rank-non-closed} and \ref{item:tr-geq-rank-closed} are, in essence, logical inverses.

\begin{lemma}
  \label{lem:extreme-points-ocnr}
  Let $C \in \traceclass^+$ be a positive trace-class operator and let $A \in B(\Hil)$ with $\max \essspec(\Re(A)) = 0$, and set $P := \chi_{(0,\infty)}(\Re(A)), P_0 := \chi_{\set{0}}(\Re(A))$ and $M := \sup \Re(\ocnr(A))$ and $r := \trace(P) = \rank(\Re(A)_+)$.
  \begin{enumerate}
  \item \label{item:boundary-splitting} If $X \in \orbit(C)$ with $\Re(\trace(XA)) = M$, then $X = X_r + X_r' := PXP + P_0 X P_0$ with $PXP \in \orbit(C_r)$.
    
    Moreover, for any $Y \in \traceclass^+$ for which $Y = P_0 Y P_0$,
    \begin{equation*}
      -i \trace(YA) = \trace(YP_0 \Im(A) P_0)) \in \ocnr[Y](A_0)
    \end{equation*}
    where is the compression of $\Im(A)$ to $P_0$.

    In particular, $-i \trace(X_r' A) = \trace(X_r' P_0 \Im(A) P_0)) \in \ocnr[C'](A_0)$, where $C' := \diag(\lambda_{r+1}(C),\lambda_{r+2}(C),\ldots)$.
  \item \label{item:tr-leq-rank-non-closed} If, in addition, there is some $i \nu \in \essnr(A)$ for which $\trace(\chi_{[\nu,\infty)}(A_0)) < \rank(C')$, then there is some $y \in (M+i\reals) \cap \closure{\ocnr(A)}$ such that $\Im(y) > \Im(x)$ for all $x \in (M+i\reals) \cap \ocnr(A)$.
  \item \label{item:tr-geq-rank-closed} Inversely, if for every $i\nu \in \essnr(A)$, $\trace(\chi_{[\nu,\infty)}(A_0)) \ge \rank(C')$, then if $x \in (M+i\reals) \cap \closure{\ocnr(A)}$ has maximal imaginary part among $(M+i\reals) \cap \closure{\ocnr(A)}$, then $x \in \ocnr(A)$.
  \end{enumerate}
\end{lemma}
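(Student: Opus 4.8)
The plan is to prove the three parts in turn, with (i) supplying the structure used in (ii) and (iii). Throughout, $\Re(A)_+$ is a positive compact operator supported on $P\Hil$ with $\rank(\Re(A)_+)=r$, and by \Cref{prop:c-numerical-range-maximum,thm:c-numerical-range-selfadjoint-formula} (applied to $\Re(A)$, so that $\max\essspec(\Re(A))=0$) one has $M=\sup\ocnr(\Re(A))=\sup\ocnr(\Re(A)_+)=\sum_n\lambda_n(C)\lambda_n(\Re(A)_+)$. For (i), let $X\in\orbit(C)$ with $\trace(X\Re(A))=M$. Writing $\trace(X\Re(A))=\trace(X\Re(A)_+)-\trace(X\Re(A)_-)$ and using $\trace(X\Re(A)_+)\in\ocnr(\Re(A)_+)$ together with $\trace(X\Re(A)_-)\ge 0$, we get $\trace(X\Re(A)_-)=0$ and $\trace(X\Re(A)_+)=M$. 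The first forces $X\Re(A)_-=0$, so $X$ is supported on $(P+P_0)\Hil$; the second reads $\trace(PXP\,\Re(A)_+)=M$, and combining von Neumann's trace inequality, the submajorization $\lambda(PXP)\submaj\lambda(X)=\lambda(C)$ (a compression never increases the partial sums of the eigenvalue sequence), and \cite[Lemma~5.2(iii)]{LP-2021-LaMA} applied to $\delta_n:=\lambda_n(C)-\lambda_n(PXP)$, one forces $\lambda(PXP)=\lambda(C_r)$ (the finite- and infinite-$r$ cases handled as in the proof of \Cref{prop:rank-condition-closure}), so $PXP\in\orbit(C_r)$ and $\trace(P_0XP_0)=\trace(X)-\trace(PXP)=\trace(C')$. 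To kill the cross terms, note each unit eigenvector $u$ of $PXP$ with positive eigenvalue $\lambda_i(C)$ satisfies $\langle Xu,u\rangle=\langle PXPu,u\rangle=\lambda_i(C)=\lambda_i(X)$, so by the variational characterization of eigenvalues $u$ is an eigenvector of $X$; hence $X$ leaves $\operatorname{ran}(PXP)\subseteq P\Hil$ invariant, acts there as $PXP$, annihilates $(P\Hil\ominus\operatorname{ran}(PXP))\oplus(I-P-P_0)\Hil$, and therefore $X=PXP+P_0XP_0$ with $P_0XP_0\in\orbit_{P_0\Hil}(C')$. For the ``moreover'': if $Y=P_0YP_0\in\traceclass^+$ then $\trace(Y\Re(A))=\trace(YP_0\Re(A)P_0)=0$ since $P_0=\chi_{\{0\}}(\Re(A))$, so $-i\trace(YA)=\trace(YP_0\Im(A)P_0)=\trace(YA_0)\in\ocnr[Y](A_0)$; the ``in particular'' is the instance $Y=X_r':=P_0XP_0$, using $\orbit_{P_0\Hil}(X_r')=\orbit_{P_0\Hil}(C')$.

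For (ii) and (iii), which are logical negations of each other, I would first extract from (i) the identity $(M+i\reals)\cap\ocnr(A)=M+i\bigl(T_r+\ocnr[C'](A_0)\bigr)$, where $T_r:=\{\trace(X_r\,P\Im(A)P):X_r\in\orbit(C_r),\ \trace(X_r\Re(A))=M\}$; every such $X_r$ is supported on the finite-dimensional space $P\Hil$ (by (i) applied to $C_r$), so $T_r$ is compact and $\ocnr[C'](A_0)\subseteq\reals$. Thus $\sigma:=\sup\{\Im(x):x\in(M+i\reals)\cap\ocnr(A)\}=\max T_r+\sup\ocnr[C'](A_0)$, and \Cref{thm:c-numerical-range-selfadjoint-formula} applied to $A_0,C'$ on $P_0\Hil$ tells us that $\sup\ocnr[C'](A_0)$ is attained iff $\rank(C')\le\trace(\chi_{[\nu_0,\infty)}(A_0))$ for $\nu_0:=\max\essspec(A_0)$, and that $\sup\ocnr[C'](A_0)=\sum_n\lambda_n(C')\max(\nu_0,\lambda_n(A_0))$. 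On the closure side, $(M+i\reals)\cap\closure{\ocnr(A)}$ is a face of the compact convex set $\closure{\ocnr(A)}$, so its point $x^{*}$ of largest imaginary part is unique and is an extreme point of $\closure{\ocnr(A)}$; combining \Cref{thm:main-theorem}, Milman's converse to Krein--Milman, and a \weakstar{} limiting argument modelled on the proof of \Cref{prop:weak-star-convergence-essential-numerical-range} (the case where the truncation index escapes to $\infty$ produces a point already in $\ocnr(A)$), we get $x^{*}=\trace(X_0A)+\trace(C-C_m)\mu$ with $X_0\in\orbit(C_m)$ of finite rank and $\mu\in\essnr(A)$. Then $\Re(x^{*})=M$, together with $\trace(X_0\Re(A))\le M$ and $\Re(\mu)\le 0$, forces $\trace(X_0\Re(A))=M$ — hence $m\ge r$, else the inequality is strict — and either $\trace(C-C_m)=0$ (so $x^{*}=\trace(X_0A)\in\ocnr(A)$, done) or $\mu=i\nu$ with $i\nu\in\essnr(A)$. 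Applying (i) to $C_m$ yields $\Im(x^{*})=s_0+t_0+\trace(C-C_m)\nu$ with $s_0\in T_r$ and $t_0\in\ocnr[C_m'](A_0)$, where $C_m':=\diag(\lambda_{r+1}(C),\dots,\lambda_m(C),0,\dots)$.

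For (iii): since $i\nu_0\in\essnr(A)$ (orthonormal sequences inside $P_0\Hil$ give $i\cdot\essnr(A_0)\subseteq\essnr(A)$), the hypothesis yields $\rank(C')\le\trace(\chi_{[\nu_0,\infty)}(A_0))$, so $\sup\ocnr[C'](A_0)$ is attained; applied with the $\nu$ above it gives $\lambda_n(A_0)\ge\nu$ for all $n\le\rank(C')$, and a short computation with the eigenvalue sequences (bounding $t_0\le\sup\ocnr[C_m'](A_0)=\sum_{n\le m-r}\lambda_{r+n}(C)\lambda_n(A_0)$ and $\trace(C-C_m)\nu\le\sum_{m-r<n\le\rank(C')}\lambda_{r+n}(C)\lambda_n(A_0)$) shows $t_0+\trace(C-C_m)\nu\le\sup\ocnr[C'](A_0)$. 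Hence $\Im(x^{*})\le\max T_r+\sup\ocnr[C'](A_0)=\sigma\le\Im(x^{*})$, so $\Im(x^{*})=\sigma$, which is attained in $\ocnr(A)$; as $x^{*}$ is the unique face point with that imaginary part, $x^{*}\in\ocnr(A)$. For (ii) I argue contrapositively: if $x^{*}\in\ocnr(A)$ then $\sigma$ is attained, so $\rank(C')\le\trace(\chi_{[\nu_0,\infty)}(A_0))$, and the hypothesis then forces an $i\nu\in\essnr(A)$ with $K:=\trace(\chi_{[\nu,\infty)}(A_0))<\rank(C')$ and $\nu>\nu_0$; placing the top $K$ eigenvalues of $C'$ on the top $K$ eigenvectors of $A_0$ to build $W_K$ (so $\lambda(W_K)\submaj\lambda(C')$), the point $\trace((X_r\oplus W_K)A)+\trace(C-X_r-W_K)\,i\nu$, with $X_r$ a maximizer of $T_r$, lies in $\closure{\ocnr(A)}\cap(M+i\reals)$ by \Cref{thm:main-theorem} and has imaginary part $\max T_r+\sum_{n\le K}\lambda_n(C')\lambda_n(A_0)+\nu\sum_{n>K}\lambda_n(C')>\max T_r+\sup\ocnr[C'](A_0)=\sigma=\Im(x^{*})$, contradicting the maximality of $\Im(x^{*})$.

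I expect the main obstacle to be two-fold: first, the passage from the abstract description $\closure{\ocnr(A)}=\conv\bigcup_m(\ocnr[C_m](A)+\trace(C-C_m)\essnr(A))$ to the concrete extreme-point representative $x^{*}=\trace(X_0A)+\trace(C-C_m)\mu$ with $X_0$ of finite rank, which needs Milman's theorem together with a careful \weakstar{} compactness/limiting argument; and second, the eigenvalue bookkeeping — tracking which eigenvalues of $C$ are placed on $P\Hil$, which on $P_0\Hil$, and which ``escape into the essential numerical range'', and verifying the attendant submajorization inequalities — which is a refinement of the $\delta$-sequence estimates of \cite[Lemma~5.2]{LP-2021-LaMA} already used in the proof of \Cref{prop:rank-condition-closure}.
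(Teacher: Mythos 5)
For parts (ii) and (iii) your route is essentially the paper's: split real-part maximizers via (i) into an $\orbit(C_r)$ piece on $P\Hil$ and a piece with eigenvalues $\lambda(C')$ on $P_0\Hil$, use \Cref{thm:main-theorem} to write the extremal point of the closed face as $\trace(X_kA)+\trace(C-C_k)i\nu$, compare eigenvalue tails through the attainment criterion of \Cref{thm:c-numerical-range-selfadjoint-formula}, and then construct the improving point (for (ii)) or an attaining $X\in\orbit(C)$ (for (iii)); your $T_r$/$\sigma$ packaging and the contradiction framing of (ii) are cosmetic differences. However, the step you defer as the ``main obstacle'' is the one left genuinely open, and it needs neither Milman's theorem nor a weak$^{*}$ limiting argument: \Cref{thm:main-theorem} expresses $\closure{\ocnr(A)}$ as the \emph{plain} convex hull (no closure) of the convex sets $\ocnr[C_m](A)+\trace(C-C_m)\essnr(A)$, so the extreme point $x^{*}$, being a finite convex combination of points of their union, must equal one of those points; Milman would only place $x^{*}$ in the closure of the union, and the limiting argument you gesture at is never carried out. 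You also silently assume $P_0\Hil$ is infinite-dimensional (otherwise $\max\essspec(A_0)$ is undefined); the paper treats the finite-dimensional case separately in (iii), and your argument needs the same case distinction, together with a word on the degenerate cases $r=\infty$ and $r\ge\rank(C)$, where $C'=0$ and your compactness claim for $T_r$ fails.

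The other gap is in your reconstruction of (i), which the paper obtains simply by citing \cite[Proposition~5.3]{LP-2021-LaMA}. From equality in von Neumann's inequality, $\lambda(PXP)\submaj\lambda(C)$, and \cite[Lemma~5.2(iii)]{LP-2021-LaMA} you only get equality of \emph{partial sums} of $\lambda(PXP)$ and $\lambda(C)$ at indices where $\lambda(\Re(A)_+)$ strictly decreases (in particular $\trace(PXP)=\trace(C_r)$ at $N=r$); when $\Re(A)_+$ has repeated eigenvalues this does not yield $\lambda(PXP)=\lambda(C_r)$ termwise, so the jump to $PXP\in\orbit(C_r)$ is unjustified as written. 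Likewise, the claim that $\langle Xu,u\rangle=\lambda_i(X)$ forces $u$ to be an eigenvector of $X$ is false for intermediate eigenvalues (it holds for the top eigenvalue, and then only by an induction down the spectrum). Both defects can be repaired at once by the equality case of Ky Fan's maximum principle applied to $\trace(PXP)=\sum_{n\le r}\lambda_n(X)$, which forces $P$ to reduce $X$, kills the cross terms, and shows $PXP$ carries exactly the eigenvalues $\lambda_1(C),\dots,\lambda_r(C)$ --- or, more simply, by citing \cite[Proposition~5.3]{LP-2021-LaMA} as the paper does; your ``moreover'' computation with $P_0AP_0=iP_0\Im(A)P_0$ is fine.
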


\begin{proof}
  \hfill \\
  \begin{enumerate}
  \item This follows immediately from \cite[Proposition~5.3]{LP-2021-LaMA} and one small computation.
    Note that in the case when $\rank(\Re(A)_+) = \infty$, then $P_0 X P_0 = 0$.
    By the definition of $P_0$, we find $P_0 A P_0 = P_0 \Re(A) P_0 + i P_0 \Im(A) P_0 = i P_0 \Im(A) P_0$.
    \begin{equation*}
      -i \trace(YA) = -i \trace(P_0 Y P_0 A) = -i \trace(Y P_0 A P_0) = \trace(Y P_0 \Im(A) P_0) = \trace(Y_0 A_0),
    \end{equation*}
    where $Y_0$ is the compression of $Y$ to $P_0$.
    Finally, we apply this $X'_r = P_0 X P_0$, and note that $\orbit(X'_r) = \orbit(C)$ since $\lambda(X'_r) = \lambda(C)$.
  \item Let $M := \sup \Re(\ocnr(A))$ and consider $\ocnr(A) \cap (M + i\reals)$.
    By the convexity of $\ocnr(A)$, this set is either empty, in which case there is nothing to prove (since $\closure{\ocnr(A)} \cap (M + i\reals)$ is nonempty), or else a (not necessarily closed, but possibly degenerate) line segment.
    If this line segment does not contain its upper endpoint, then we are done because this upper endpoint necessarily lies in $\closure{\ocnr(A)} \cap (M + i\reals)$.
    
    So suppose $\ocnr(A) \cap (M + i\reals)$ is a line segment which contains its upper endpoint, and let $x \in \ocnr(A) \cap (M + i\reals)$ denote this element with maximal imaginary part.
    Then there is some $X \in \orbit(C)$ with $\trace(XA) = x$.
    By \ref{item:boundary-splitting} we can decompose $X = X_r + X_r' := PXP + P_0 X P_0$ with $X_r \in \orbit(C_r)$.

    Now, we claim that $-i \trace(X_r' A) = \sup \ocnr[C'](A_0)$.
    If not, there would be some $Z' \in \orbit(C')$ acting on $P_0 \Hil$ for which $\trace(Z'A_0) > -i \trace(X_r'A)$.
    Then setting $Z := PXP + (Z' \oplus \zop_{P_0^{\perp}\Hil}) \in \orbit(C)$, would yield $\trace(ZA) \in \ocnr(A) \cap (M + i\reals)$ with imaginary part exceeding that of $x$, which is a contradiction.

    Let $m = \max \essspec(A_0)$.
    Then by \Cref{prop:c-numerical-range-maximum} and \Cref{thm:c-numerical-range-selfadjoint-formula},
    \begin{equation*}
      -i \trace(X_r' A) = m \trace(C') + \sum_{n=1}^{\rank(C')} \lambda_n(A_0 - mI)_+ \lambda_n(C').
    \end{equation*}

    Suppose there is some $i \nu \in \essnr(A)$ for which $k := \trace(\chi_{[\nu,\infty)}(A_0)) < \rank(C')$.
    Since $\sup \ocnr[C'](A_0)$ is attained (by the compression of $X_r'$ to $P_0\Hil$), \Cref{thm:c-numerical-range-selfadjoint-formula} guarantees that $\rank(C') \le \trace(\chi_{[m,\infty)}(A_0))$ and therefore $m < \nu$.
    Then let $\set{e_n}_{n=1}^k$ be the eigenvectors corresponding to the $k$ largest eigenvalues of $A_0$ (i.e., $\set{m + \lambda_n(A_0 - mI)_+}_{n=1}^k$).
    The definition of $k$ guarantees that $m + \lambda_n(A_0 - mI)_+ < \nu$ for all $n > k$.
    Consequently,
    \begin{equation*}
      \sum_{n=k+1}^{\rank(C')} (m + \lambda_n(A_0 - mI)_+) \lambda_n(C') < \sum_{n=k+1}^{\rank(C')} \nu \lambda_n(C') = \nu \trace(C - C_{r+k}).
    \end{equation*}
    Let $C'' := \diag(\lambda_{r+1}(C),\ldots,\lambda_{r+k}(C),0,\ldots)$, and select $X'' \in \orbit(C'')$ such that $e_n$ is the eigenvector of $\lambda_{r+n}(C)$ for each $1 \le n \le k$.
    Then
    \begin{align*}
      -i \trace(X_r' A)
      &= m\trace(C') + \sum_{n=1}^{\rank(C')} \lambda_n(A_0 - mI)_+ \lambda_n(C') \\
      &= \sum_{n=1}^{\rank(C')} (m + \lambda_n(A_0 - mI)_+) \lambda_n(C') \\
      &< \sum_{n=1}^k (m + \lambda_n(A_0 - mI)_+) \lambda_n(C') + \nu \trace(C-C_{r+k}) \\
      &= -i \trace(X''A) + \nu \trace(C-C_{r+k}).
    \end{align*}
    Now $X_k := X_r + X'' \in \orbit(C_{r+k})$, and
    $\Re(\trace(X_k A) + \trace(C-C_{r+k})i\nu ) = \Re(\trace(X_r A)) = M$ and $y := \trace(X_k A) + \trace(C-C_{r+k}) i \nu \in \closure{\ocnr(A)}$ by
    \Cref{thm:main-theorem}, so $y \in (M+i\reals) \cap \closure{\ocnr(A)}$.
    Moreover,
    \begin{align*}
      \Im(\trace(X_k A) + \trace(C-C_{r+k})i \nu)
      &= \Im(\trace(X_r A)) -i \trace(X'' A) + \trace(C-C_{r+k}) \nu  \\
      &> \Im(\trace(X_r A)) -i \trace(X_r' A) = \Im(\trace(XA)),
    \end{align*}
    so $\Im(y) > \Im(x)$.
  \item Since $\max \essspec(\Re(A)) = 0$, notice that $\max \Re(\essnr(A)) = \max \essnr(\Re(A)) = 0$ also.
    Since $x \in \closure{\ocnr(A)}$ is an extreme point, \Cref{thm:main-theorem} guarantees that there is some $k \le \rank(C)$, $X_k \in \orbit(C_k)$ and $\mu + i \nu \in \essnr(A)$ such that
    \begin{equation*}
      x = \trace(X_k A) + \trace(C-C_k) (\mu + i \nu).
    \end{equation*}
    Obviously, if $k = \rank(C)$, then $C_k = C$ and hence $x = \trace(X_k A) \in \ocnr(A)$.

    So suppose that $k < \rank(C)$, and hence also $\trace(C-C_k) > 0$.
    Note that since $\max \Re(\essnr(A)) = 0$, $\mu \le 0$ and applying \Cref{prop:c-numerical-range-maximum} and \Cref{thm:c-numerical-range-selfadjoint-formula},
    \begin{align*}
      \Re(x) &= \Re(\trace(X_k A)) + \trace(C-C_k) \mu \le \Re(\trace(X_k A)) \\
             &\le \sup \Re(\ocnr[C_k](A)) = \sum_{n=1}^k \lambda_n(C) \lambda^+_n(\Re(A)) \\
             &\le \sum_{n=1}^{\rank(C)} \lambda_n(C) \lambda^+_n(\Re(A)) = \sup \Re(\ocnr(A)),
    \end{align*}
    but the first and last expressions are equal, so we must have equality throughout.
    Thus $\mu = 0$ and $\Re(\trace(X_k A)) = \sup \Re(\ocnr[C_k](A)) = M$.
    By \ref{item:boundary-splitting} we can decompose $X_k = X_r + X_r' := PX_kP + P_0 X_k P_0$ with $X_r \in \orbit(C_r)$ and $X_r' \in \orbit(C'')$, where $C'' := \diag(\lambda_{r+1}(C),\ldots,\lambda_k(C),0,\ldots)$.
    Now, setting $m := \max \essspec(A_0)$ (or in case $P_0$ is finite so that $A_0$ acts on a finite dimensional space, select $m := \min \spec(A_0)$, which guarantees $m + \lambda_n(A_0 - m I)_+ = \lambda_n(A_0)$ for all $n$), then by \ref{item:boundary-splitting},
    \begin{equation*}
      -i \trace(X_r' A) \le \sup \ocnr[C''](A_0) = \sum_{n=1}^{k-r} \lambda_{r+n}(C) (m + \lambda_n(A_0 - mI)_+)
    \end{equation*}
    By hypothesis, we know that $\nu \le m + \lambda_n(A_0 - mI)_+$ for all $1 \le n \le \rank(C')$.
    Therefore,
    \begin{equation*}
       \trace(C-C_k) \nu = \sum_{n=k-r+1}^{\rank(C')} \lambda_{r+n}(C) \nu \le \sum_{n=k-r+1}^{\rank(C')} \lambda_{r+n}(C) (m + \lambda_n(A_0 - mI)_+).
    \end{equation*}

    Either $P_0$ is finite, so that $A_0$ acts on a finite dimensional space, in which case $\sup \ocnr[C'](A_0)$ is attained by compactness of the unitary group in finite dimensions, or else $P_0$ is infinite.
    In the latter case, since $m \in \essspec(A_0) \subseteq \essnr(A_0)$, there is some orthonormal sequence of vectors $x_n \in P_0 \Hil$ for which $\innerprod{A_0 x_n}{x_n} \to m$, in which case $\innerprod{A x_n}{x_n} \to im$, and hence $im \in \essnr(A)$.
    So by hypothesis, $\trace(\chi_{[m,\infty)}(A_0)) \ge \rank(C')$.
    Therefore, by \Cref{thm:c-numerical-range-selfadjoint-formula}, $\sup \ocnr[C'](A_0)$ is attained.

    Thus, regardless of whether $P_0$ is finite or infinite $\sup \ocnr[C'](A_0)$ is attained by some $\trace(YA_0)$, with $Y \in \orbit(C')$ acting on $P_0 \Hil$.
    Then set $X' := Y \oplus \zop_{P_0^{\perp} \Hil} \in \orbit(C')$ and notice $X := X_r + X' \in \orbit(C)$.
    Now, by the choice of $Y$ and the previous two displays,
    \begin{align*}
      -i \trace(X'A) &= \trace(YA_0) = \sup \ocnr[C'](A_0) \\
                     &= \sum_{n=1}^{\rank(C')} \lambda_{r+n}(C) (m + \lambda_n(A_0 - mI)_+) \\
                     &\ge -i\trace(X_r'A) + \trace(C-C_k) \nu.
    \end{align*}
    Consequently, if we let $y = \trace(XA) \in \ocnr(A)$, then $\Re(y) = M = \Re(x)$.
    Moreover,
    \begin{align*}
      \Im(y) &= \Im(\trace(X_r A)) + \trace(YA_0) \\
             &\ge \Im(\trace(X_r A)) + \trace(C - C_k)\nu - i\trace(X_r' A) \\
             &\ge \Im(\trace(X_r A)) + \trace(C - C_k)\nu + \trace(X_r' \Im(A)) \\
             &= \Im(x).
    \end{align*}
    and by the hypothesis on $x$, we also have $\Im(x) \ge \Im(y)$.
    Therefore, $x = y \in \ocnr(A)$. \qedhere
  \end{enumerate}
\end{proof}

Using \Cref{lem:extreme-points-ocnr}, we can bootstrap it into \Cref{prop:restriction-preserves-closed-boundary} by making use of \Cref{thm:main-theorem} to conclude that if a portion of the boundary is closed (i.e., if the intersection of a supporting line with $\closure{\ocnr(A)}$ is contained within $\ocnr(A)$), then this property is inherited by all $\ocnr[C_m](A)$ with $0 \le m < \rank(C)$ (i.e., the intersection of $\closure{\ocnr[C_m](A)}$ with a supporting line parallel to the one for $\closure{\ocnr(A)}$ is contained within $\ocnr[C_m](A)$).

\begin{proposition}
  \label{prop:restriction-preserves-closed-boundary}
  Let $C \in \traceclass^+$ be a positive trace-class operator and let $A \in B(\Hil)$, and set $M := \sup \Re(\ocnr(A))$.
  If
  \begin{equation*}
    \closure{\ocnr(A)} \cap (M+i\reals) = \ocnr(A) \cap (M+i\reals),
  \end{equation*}
  then for all $0 \le m < \rank(C)$,
  \begin{equation*}
    \closure{\ocnr[C_m](A)} \cap (M+i\reals) = \ocnr[C_m](A) \cap (M+i\reals).
  \end{equation*}
\end{proposition}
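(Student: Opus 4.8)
The plan is to reduce the claim to the two endpoints of the slice $I_m := \closure{\ocnr[C_m](A)} \cap (M+i\reals)$ and then to feed the hypothesis into \Cref{lem:extreme-points-ocnr}. The inclusion $\supseteq$ is trivial, and since $\closure{\ocnr[C_m](A)}$ is compact (closed and bounded by $\norm{C_m}_1\norm{A}$) and convex, $I_m$ is a (possibly empty or degenerate) closed bounded line segment; as $\ocnr[C_m](A)$ is convex by \Cref{thm:c-numerical-range-via-majorization}(i), it suffices to show both endpoints of $I_m$ lie in $\ocnr[C_m](A)$. Moreover, replacing $A$ by $A^{*}$ replaces $\ocnr[D](A)$, and hence $\closure{\ocnr[D](A)}$, by its complex conjugate for every selfadjoint $D$, fixes $\Re(A)$ and therefore $M$, so the hypothesis for $A$ is equivalent to the one for $A^{*}$; thus it is enough to handle the endpoint of $I_m$ of larger imaginary part.

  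Next I would extract a rank condition from the hypothesis. Since \Cref{lem:extreme-points-ocnr} is stated with $\max\essspec(\Re(A)) = 0$, I would apply it to the translate $A - m_0 I$ (where $m_0 := \max\essspec(\Re(A))$) and translate conclusions back, using that this replaces $\ocnr[D](A)$ by $\ocnr[D](A) - m_0\trace(D)$ for $D \in \traceclass^+$. The proposition's hypothesis says precisely that the maximal-real-part face of $\ocnr(A-m_0I)$ is closed, so the contrapositive of \Cref{lem:extreme-points-ocnr}(ii) applied to $(C, A-m_0I)$ forces
  \begin{equation*}
    \trace(\chi_{[\nu,\infty)}(A_0)) \ge \rank(C') \quad \text{for every } m_0 + i\nu \in \essnr(A),
  \end{equation*}
  where $A_0$ is the compression of $\Im(A)$ to $\chi_{\{m_0\}}(\Re(A))$, $r := \rank((\Re(A)-m_0I)_+)$ and $C' := \diag(\lambda_{r+1}(C),\lambda_{r+2}(C),\ldots)$. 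The key point is that the operator $(C_j)' = \diag(\lambda_{r+1}(C_j),\lambda_{r+2}(C_j),\ldots)$ attached to $C_j$ is obtained from $C'$ by zeroing out all but finitely many entries, so $\rank((C_j)') \le \rank(C')$; hence the rank condition needed to invoke \Cref{lem:extreme-points-ocnr}(iii) for each $(C_j, A-m_0I)$ with $0 \le j \le m$ is inherited for free.

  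I would then compare $M + i\reals$ with $M_m := \sup\Re(\ocnr[C_m](A))$, which by \Cref{prop:c-numerical-range-maximum,thm:c-numerical-range-selfadjoint-formula} equals $\sum_{n=1}^{m}\lambda_n(C)\big(m_0 + \lambda_n((\Re(A)-m_0I)_+)\big)$, while $M$ is the same sum over $1 \le n \le \rank(C)$; in particular $M \ge M_m$ whenever $m_0 \ge 0$. If $M > M_m$, then $\closure{\ocnr[C_m](A)} \subseteq \{ z : \Re(z) \le M_m \}$ misses $M+i\reals$, so $I_m = \emptyset$ and there is nothing to prove. If $M = M_m$, then $M+i\reals$ supports $\closure{\ocnr[C_m](A)}$ at maximal real part, and \Cref{lem:extreme-points-ocnr}(iii) applied to $(C_m, A-m_0I)$ and translated back puts the upper endpoint of $I_m$ in $\ocnr[C_m](A)$; with the $A^{*}$-symmetric statement and convexity of $\ocnr[C_m](A)$ this yields $I_m \subseteq \ocnr[C_m](A)$, finishing this case.

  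The main obstacle is the remaining case $M < M_m$, which by the formulas can only occur when $m_0 = \max\essspec(\Re(A)) < 0$: then $M+i\reals$ cuts through the interior of $\closure{\ocnr[C_m](A)}$ rather than supporting it, and \Cref{lem:extreme-points-ocnr}(iii) does not bear directly on the endpoints of $I_m$. For this case I would first apply \Cref{lem:extreme-points-ocnr}(iii) (and its $A^{*}$-analogue) to each $(C_j, A-m_0I)$ with $0 \le j \le m$ at that range's own supporting line, which the inheritance above makes legitimate, to learn that the maximal-real-part face of $\ocnr[C_j](A)$ is contained in $\ocnr[C_j](A)$ for every such $j$; then I would use the description $\closure{\ocnr[C_m](A)} = \conv\bigcup_{0\le j\le m}\big(\ocnr[C_j](A) + \trace(C_m - C_j)\essnr(A)\big)$ from \Cref{thm:main-theorem}, writing an endpoint of $I_m$ as such a convex combination and exploiting that its real part is pinned at $M$ while $\max\Re(\essnr(A)) = m_0 < 0$, to force the essential-numerical-range contributions to be extremal and thereby reduce the endpoint to a value genuinely attained in $\ocnr[C_m](A)$. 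I expect this last reduction to be the most technical part of the argument.
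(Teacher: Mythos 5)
Your completed cases follow essentially the same route as the paper: reduce by convexity and the $A\mapsto A^{*}$ symmetry to the endpoint of the slice with largest imaginary part, use the hypothesis together with the contrapositive of \Cref{lem:extreme-points-ocnr}\ref{item:tr-leq-rank-non-closed} (applied after translating $\Re(A)$ so its essential spectrum has maximum $0$) to extract the rank condition $\trace(\chi_{[\nu,\infty)}(A_0)) \ge \rank(C')$, observe that this condition is inherited by the truncations since $\rank((C_m)') \le \rank(C')$, and then apply \Cref{lem:extreme-points-ocnr}\ref{item:tr-geq-rank-closed} to $(C_m,A-m_0I)$. The paper dispatches the regime $m \le \rank((\Re(A)-m_0I)_+)$ separately via \Cref{prop:rank-condition-closure}; in your scheme that regime is absorbed into the cases $M>M_m$ and $M=M_m$ because then $(C_m)'=0$, which is a harmless repackaging.

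The one place you depart from the paper is your third case $M<M_m$, and there your argument is not complete. The paper translates at the outset so that $\max\essspec(\Re(A))=0$, and in that normalization one always has $M \ge M_m := \sup\Re(\ocnr[C_m](A))$, with the line $M+i\reals$ meeting $\closure{\ocnr[C_m](A)}$ only when $M=M_m$; so the ``interior chord'' situation never arises in the paper's argument --- in effect the paper proves closedness of the face of $\ocnr[C_m](A)$ on its \emph{own} supporting line, which is also exactly what the proof of \Cref{thm:closedness-restricts} consumes. Your case appears only because you carry the untranslated $M$ through (you are right that the translation shifts $\ocnr[C_m](A)$ by $m_0\trace(C_m)$ rather than $m_0\trace(C)$, so the literal statement for $m_0\neq 0$ concerns a different line), but your sketched resolution is unlikely to close it: at a point of $\closure{\ocnr[C_m](A)}$ with $\Re = M < M_m$ the real part is not extremal, so when you decompose it via \Cref{thm:main-theorem} as $\trace(X_kA)+\trace(C_m-C_k)(\mu+i\nu)$ nothing pins $\mu$ or forces the essential-numerical-range contribution to be extremal --- the mechanism driving \Cref{lem:extreme-points-ocnr}\ref{item:tr-geq-rank-closed} is simply unavailable on a non-supporting line. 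So either supply a genuinely new argument for that case, or do what the paper does: normalize first and prove the statement for the supporting line of $\ocnr[C_m](A)$ (the two formulations coincide when $\max\essspec(\Re(A))=0$, and the supporting-line version is the one needed downstream).
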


\begin{proof}
  By translating, we may assume without loss of generality that $\max \essspec(\Re(A)) = 0$.
  Note that if $\rank(C_m) = m \le \rank(\Re(A)_+) = \trace(\chi_{(0,\infty)}(\Re(A)))$, then the claim follows from \Cref{prop:rank-condition-closure}.
  So we may suppose $\trace(\chi_{(0,\infty)}(\Re(A))) < \rank(C_m)$.
  Additionally, by the hypothesis on $\ocnr(A)$, \Cref{thm:c-numerical-range-selfadjoint-formula} guarantees that $\rank(C) \le \trace(\chi_{[0,\infty)}(\Re(A)))$.

  Set $P := \chi_{(0,\infty)}(\Re(A))$ and $r := \trace(P) = \rank(\Re(A)_+)$, and 
  set $P_0 := \chi_{\set{0}}(\Re(A))$ and $M := \sup \Re(\ocnr(A))$.
  Note that $r = \trace(P) < \rank(C_m) = m$.
  Take $x \in \closure{\ocnr(A)}$ such that $\Re(x) = M$ and for $x$ has maximal imaginary part among $\closure{\ocnr(A)} \cap (M+i\reals)$.
  The hypothesis implies $x \in \ocnr(A)$, and therefore by the contrapositive of \Cref{lem:extreme-points-ocnr}\ref{item:tr-leq-rank-non-closed}, for every $i \nu \in \essnr(A)$, $\trace(\chi_{[\nu,\infty)}(A_0)) \ge \rank(C') \ge \rank(C'')$, where $C'' := \diag(\lambda_{r+1}(C),\ldots,\lambda_m(C),0,\ldots)$.
  Then by \Cref{lem:extreme-points-ocnr}\ref{item:tr-geq-rank-closed}, for $y \in \closure{\ocnr[C_m](A)}$ with $\Re(y) = \sup \Re(\ocnr[C_m](A)) = M$, and since $\trace(\chi_{[\nu,\infty)}(A_0)) \ge \rank(C'')$ and $y$ having maximal imaginary part among $\closure{\ocnr[C_m](A)} \cap (M+i\reals)$, we have $y \in \ocnr[C_m](A)$.

  Applying the above argument to $A^{*}$ proves that for $z \in \closure{\ocnr[C_m](A)}$ with $\Re(z) = \sup \Re(\ocnr[C_m](A)) = M$ and $z$ having minimal imaginary part among $\closure{\ocnr[C_m](A)} \cap (M+i\reals)$, we have $z \in \ocnr[C_m](A)$.
  Since every element of $\closure{\ocnr[C_m](A)} \cap (M+i\reals)$ is a convex combination of $y,z$, and since $\ocnr[C_m](A)$ is convex, we conclude $\closure{\ocnr[C_m](A)} \cap (M+i\reals) = \ocnr[C_m](A) \cap (M+i\reals)$.
\end{proof}

\begin{theorem}
  \label{thm:closedness-restricts}
  Let $C \in \traceclass^+$ be a positive trace-class operator and let $A \in B(\Hil)$.
  If $\ocnr(A)$ is closed, then for all $0 \le m < \rank(C)$, $\ocnr[C_m](A)$ is closed.
\end{theorem}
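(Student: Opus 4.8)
The plan is to deduce the closedness of $\ocnr[C_m](A)$ from the fact that it is bounded (since $\abs{\trace(XA)} \le \trace(C_m)\norm{A}$ for all $X \in \orbit(C_m)$) and convex (by \Cref{thm:c-numerical-range-via-majorization}, with $C_m$ in the role of $C$). A bounded convex subset of $\complex$ coincides with its closure as soon as it contains the boundary of that closure, so it suffices to rule out the existence of $p \in \closure{\ocnr[C_m](A)} \setminus \ocnr[C_m](A)$; such a $p$ would necessarily lie on a supporting line of $\closure{\ocnr[C_m](A)}$. The two workhorses are \Cref{prop:rank-condition-closure} and \Cref{prop:restriction-preserves-closed-boundary}, and the recurring device is to rotate and translate a supporting line into standard position using the identity $\ocnr[C_m](aI+bA) = a\trace(C_m) + b\,\ocnr[C_m](A)$ (and its counterpart for $\ocnr$).

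First I would reduce to a supporting line in a fixed direction. Given $p \in \closure{\ocnr[C_m](A)}\setminus\ocnr[C_m](A)$, the point $p$ is not an interior point of $\ocnr[C_m](A)$, so a standard separation argument produces $\theta$ with $\Re(e^{i\theta}p) = \sup\Re(e^{i\theta}\ocnr[C_m](A))$. Setting $B := e^{i\theta}A$, we have $\ocnr[C_m](B) = e^{i\theta}\ocnr[C_m](A)$ and $\ocnr(B) = e^{i\theta}\ocnr(A)$, the latter still closed; so it suffices to prove that whenever $\ocnr(B)$ is closed, every point of $\closure{\ocnr[C_m](B)}$ whose real part equals $M' := \sup\Re(\ocnr[C_m](B))$ already lies in $\ocnr[C_m](B)$. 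Translating $B \mapsto B - \mu_0 I$ with $\mu_0 := \max\essspec(\Re(B))$ only shifts $\ocnr(B)$ and $\ocnr[C_m](B)$, keeps $\ocnr(B)$ closed, and does not affect the statement to be proved; so I may assume $\max\essspec(\Re(B)) = 0$.

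Now I would split according to whether $\rank(\Re(B)_+) \ge m$ or $\rank(\Re(B)_+) < m$. If $\rank(\Re(B)_+) \ge m$, apply \Cref{prop:rank-condition-closure} with $C_m$ and $B$ in place of $C$ and $A$: since $\max\essspec(\Re(B)) = 0$, its hypothesis reads $\rank(\Re(B)_+) \ge \rank(C_m) = m$, which is exactly the case assumption, and its conclusion is $\closure{\ocnr[C_m](B)}\cap(M'+i\reals) = \ocnr[C_m](B)\cap(M'+i\reals)$. If instead $\rank(\Re(B)_+) < m$, then \Cref{prop:c-numerical-range-maximum} and \Cref{thm:c-numerical-range-selfadjoint-formula} (together with $\Re(\ocnr(B)) = \ocnr(\Re(B))$ and $\max\essspec(\Re(B)) = 0$) give
\begin{equation*}
  M' = \sum_{n=1}^{m}\lambda_n(C)\lambda_n(\Re(B)_+) = \sum_{n=1}^{\rank(C)}\lambda_n(C)\lambda_n(\Re(B)_+) = \sup\Re(\ocnr(B)) =: M,
\end{equation*}
the middle equality because $\lambda_n(\Re(B)_+) = 0$ for every $n > \rank(\Re(B)_+)$, hence for every $n > m$. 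Then \Cref{prop:restriction-preserves-closed-boundary} applies to $B$ (its hypothesis holds trivially, $\ocnr(B)$ being closed) and yields $\closure{\ocnr[C_m](B)}\cap(M+i\reals) = \ocnr[C_m](B)\cap(M+i\reals)$, which is the same equality since $M = M'$. In either case the point we started from lies in $\ocnr[C_m](B)$; undoing the translation and the rotation places the original $p$ in $\ocnr[C_m](A)$, a contradiction, and the theorem follows.

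The body of the argument is essentially bookkeeping on top of \Cref{prop:rank-condition-closure,prop:restriction-preserves-closed-boundary}; the delicate point, and the reason the case split is needed, is that \Cref{prop:restriction-preserves-closed-boundary} is phrased in terms of the supporting line at $M = \sup\Re(\ocnr(A))$ rather than at $\sup\Re(\ocnr[C_m](A))$, and these differ in general (before normalizing $\max\essspec(\Re(A)) = 0$ one can even have $\sup\Re(\ocnr[C_m](A)) > \sup\Re(\ocnr(A))$, for instance with $A = \diag(1,-1,-1,\ldots)$ and $m < \rank(C)$). The crux is to observe that, after normalization, the regime $\rank(\Re(B)_+) < m$ is exactly the one in which the two suprema coincide, so that \Cref{prop:restriction-preserves-closed-boundary} has nonvacuous content there, whereas the complementary regime $\rank(\Re(B)_+) \ge m$ is exactly the one in which \Cref{prop:rank-condition-closure} can be applied directly to $C_m$.
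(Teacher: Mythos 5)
Your proposal is correct and takes essentially the same route as the paper: the paper's proof of \Cref{thm:closedness-restricts} is exactly your rotation argument, applying \Cref{prop:restriction-preserves-closed-boundary} to $e^{i\theta}A$ for every $0 \le \theta < 2\pi$. The case split you perform at the theorem level --- invoking \Cref{prop:rank-condition-closure} with $C_m$ in place of $C$ when $\rank(\Re(e^{i\theta}A)-m_{\theta}I)_+ \ge m$ (where $m_{\theta} := \max\essspec(\Re(e^{i\theta}A))$), and otherwise using the coincidence of $\sup\Re(\ocnr[C_m](e^{i\theta}A))$ with $\sup\Re(\ocnr(e^{i\theta}A))$ so that \Cref{prop:restriction-preserves-closed-boundary} has nonvacuous content --- is precisely the split that opens the paper's proof of \Cref{prop:restriction-preserves-closed-boundary}, so your extra bookkeeping only makes explicit the point (the possible mismatch of the two suprema) that the paper's one-line proof leaves implicit.
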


\begin{proof}
  Apply \Cref{prop:restriction-preserves-closed-boundary} to $e^{i\theta} A$ for each $0 \le \theta < 2\pi$ and note that $\ocnr(e^{i\theta}(A)) = e^{i\theta} \ocnr(A)$.
\end{proof}

\begin{corollary}
  \label{cor:closed-inclusion-chain}
  Let $C \in \traceclass^+$ be a positive trace-class operator and let $A \in B(\Hil)$.
  Then $\ocnr(A)$ is closed if and only if
  \begin{equation}
    \label{eq:inclusion-chain}
    \begin{aligned}
      \trace(C) \essnr(A) &\subseteq \ocnr[C_1](A) + \trace(C-C_1) \essnr(A) \\
      &\subseteq \ocnr[C_2](A) + \trace(C-C_2) \essnr(A) \\
      &\ \,\vdots \\
      &\subseteq \ocnr(A).
    \end{aligned}
  \end{equation}
\end{corollary}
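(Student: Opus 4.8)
The plan is to read off both implications from \Cref{thm:main-theorem} and \Cref{thm:closedness-restricts}, using only the convexity of $\ocnr(A)$ (\Cref{thm:c-numerical-range-via-majorization}(i)) as an extra ingredient. For $0 \le m \le \rank(C)$ set $S_m := \ocnr[C_m](A) + \trace(C-C_m)\essnr(A)$, where $C_m = \diag(\lambda_1(C),\ldots,\lambda_m(C),0,\ldots)$ and, when $\rank(C) = \infty$, the index $m = \rank(C)$ refers to $C_{\rank(C)} := C$. Since $C_0 = \zop$ we have $\ocnr[C_0](A) = \{0\}$ and $\trace(C-C_0) = \trace(C)$, so $S_0 = \trace(C)\essnr(A)$; and $S_{\rank(C)} = \ocnr(A)$. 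Thus the displayed chain \eqref{eq:inclusion-chain} is exactly the assertion $S_0 \subseteq S_1 \subseteq \cdots \subseteq S_{\rank(C)} = \ocnr(A)$ (of order type $\omega+1$ when $\rank(C) = \infty$), while \Cref{thm:main-theorem} reads $\closure{\ocnr(A)} = \conv \bigcup_{0 \le m \le \rank(C)} S_m$.

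For the ``if'' direction, suppose \eqref{eq:inclusion-chain} holds. Then $S_m \subseteq \ocnr(A)$ for every $m$, so $\bigcup_m S_m \subseteq \ocnr(A)$, and since $\ocnr(A)$ is convex, $\conv \bigcup_m S_m \subseteq \ocnr(A)$. By \Cref{thm:main-theorem} the left-hand side is $\closure{\ocnr(A)}$, so $\ocnr(A)$ is closed. This half is essentially immediate.

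For the ``only if'' direction, suppose $\ocnr(A)$ is closed. First, \Cref{thm:closedness-restricts} gives that $\ocnr[C_m](A)$ is closed for every $0 \le m < \rank(C)$, and $\ocnr[C_{\rank(C)}](A) = \ocnr(A)$ is closed by hypothesis; in particular $\ocnr[C_{m+1}](A)$ is closed for every $0 \le m < \rank(C)$. The crux is then the single inclusion $S_m \subseteq S_{m+1}$ for each such $m$. To prove it I would apply \Cref{thm:main-theorem} with $C$ replaced by the positive trace-class operator $C_{m+1}$: its truncations satisfy $(C_{m+1})_j = C_j$ for $0 \le j \le m+1 = \rank(C_{m+1})$, so the term indexed by $j = m$ in the resulting convex hull is $\ocnr[C_m](A) + \trace(C_{m+1}-C_m)\essnr(A)$, and hence
\[
  \ocnr[C_m](A) + \trace(C_{m+1}-C_m)\essnr(A) \subseteq \closure{\ocnr[C_{m+1}](A)} = \ocnr[C_{m+1}](A),
\]
the equality because $\ocnr[C_{m+1}](A)$ is closed. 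Adding $\trace(C-C_{m+1})\essnr(A)$ to both sides, and using $\trace(C_{m+1}-C_m) + \trace(C-C_{m+1}) = \trace(C-C_m)$ together with the trivial set inclusion $(s+t)\essnr(A) \subseteq s\essnr(A) + t\essnr(A)$ for $s,t \ge 0$, yields
\[
  S_m = \ocnr[C_m](A) + \trace(C-C_m)\essnr(A) \subseteq \ocnr[C_{m+1}](A) + \trace(C-C_{m+1})\essnr(A) = S_{m+1}.
\]
This gives all consecutive inclusions of \eqref{eq:inclusion-chain}; when $\rank(C) = \infty$ the chain has order type $\omega+1$, and the remaining relations $S_m \subseteq \ocnr(A)$ follow from $\ocnr(A) = \closure{\ocnr(A)} = \conv \bigcup_j S_j \supseteq S_m$ via \Cref{thm:main-theorem}.

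The only step that needs care is the bookkeeping in the ``only if'' direction: checking that \Cref{thm:main-theorem} applied to the truncation $C_{m+1}$ really produces the term $\ocnr[C_m](A) + \trace(C_{m+1}-C_m)\essnr(A)$, which comes down to the identity $(C_{m+1})_j = C_j$ for $0 \le j \le m+1$. Past that there is no genuine obstacle — all of the analytic content has already been absorbed into \Cref{thm:main-theorem,thm:closedness-restricts}.
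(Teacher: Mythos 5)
Your proposal is correct and follows essentially the same route as the paper: the ``if'' direction from convexity of $\ocnr(A)$ plus \Cref{thm:main-theorem}, and the ``only if'' direction by invoking \Cref{thm:closedness-restricts} and then applying \Cref{thm:main-theorem} to the truncation $C_{m+1}$ to get $\ocnr[C_m](A) + \trace(C_{m+1}-C_m)\essnr(A) \subseteq \ocnr[C_{m+1}](A)$, followed by adding $\trace(C-C_{m+1})\essnr(A)$ to both sides. Your explicit remark that $(s+t)\essnr(A) \subseteq s\essnr(A) + t\essnr(A)$ just spells out a step the paper leaves implicit.
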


\begin{proof}
  $(\Leftarrow)$
  This follows immediately from the chain of inclusions \eqref{eq:inclusion-chain}, and \Cref{thm:c-numerical-range-via-majorization,thm:main-theorem}.

  $(\Rightarrow)$
  Suppose that $\ocnr(A)$ is closed.
  Then by \Cref{thm:closedness-restricts}, $\ocnr[C_m](A)$ is closed for every $0 \le m < \rank(C)$.
  Fix an arbitrary $0 \le m < m+1 < \rank(C)$.
  Since $\ocnr[C_{m+1}](A)$ is closed, \Cref{thm:main-theorem} applied to $\ocnr[C_{m+1}](A)$ guarantees
  \begin{equation*}
    \ocnr[C_m](A) + \trace(C_{m+1} - C_m) \essnr(A) \subseteq \ocnr[C_{m+1}](A).
  \end{equation*}
  Therefore, adding $\trace(C-C_{m+1}) \essnr(A)$ to both sides,
  \begin{equation*}
    \ocnr[C_m](A) + \trace(C-C_m) \essnr(A) \subseteq \ocnr[C_{m+1}](A) + \trace(C-C_{m+1}) \essnr(A).
  \end{equation*}
  Moreover, \Cref{thm:main-theorem} also guarantees that $\ocnr(A)$ contains the entire chain.
\end{proof}

\bibliographystyle{tfnlm}
\bibliography{references.bib}

\end{document}